\documentclass[a4paper,UKenglish,cleveref, autoref, thm-restate]{lipics-v2021}

\pdfoutput=1 
\hideLIPIcs  
\nolinenumbers



\usepackage{pgfplotstable}
\usepackage{subcaption}
\usetikzlibrary{calc}
\usepackage{algorithm}
\usepackage[noend]{algpseudocode}

\usepackage{thmtools}
\usepackage{comment}
\usepackage{url}

\newcommand{\N}{\mathbb{N}}
\newcommand{\Z}{\mathbb{Z}}
\newcommand{\FP}{\mathrm{FP}}
\newcommand{\NP}{\mathrm{NP}}
\newcommand{\diff}{\mathrm{diff}}
\newcommand{\dom}{\mathrm{dom}}
\newcommand{\dist}{\mathrm{dist}}
\newcommand{\CC}{\mathcal{C}}
\newcommand{\FF}{\mathcal{F}}
\newcommand{\II}{\mathcal{I}}

\newcommand{\koynnos}{k\"oynn\"os}
\newcommand{\Koynnos}{K\"oynn\"os}
\newcommand{\kynnos}{kynn\"os}
\newcommand{\Kynnos}{Kynn\"os}

\newcommand{\annu}[8]{\begin{bmatrix} #1 & #2 & #3 & #4 \\
#5 & #6 & #7 & #8 \end{bmatrix}}

\newcommand{\sannu}[8]{\left[ \begin{smallmatrix} #1 & #2 & #3 & #4 \\
#5 & #6 & #7 & #8 \end{smallmatrix} \right]}

\newtheorem{question}{Question}

\definecolor{blu}{rgb}{0,1,1}

\makeatletter
\tikzset{
    zero color/.initial=white,
    zero color/.get=\zerocol,
    zero color/.store in=\zerocol,
    one color/.initial=red,
    one color/.get=\onecol,
    one color/.store in=\onecol,
    two color/.initial=blue,
    two color/.get=\twocol,
    two color/.store in=\twocol,
    cell wd/.initial=1,
    cell wd/.get=\cellwd,
    cell wd/.store in=\cellwd,
    cell ht/.initial=1,
    cell ht/.get=\cellht,
    cell ht/.store in=\cellht,
}
\makeatother

\newcommand{\drawit}[1][1]{

      \pgfplotstablegetrowsof{\matrixfile} 
      \pgfmathtruncatemacro{\totrow}{\pgfplotsretval}
      \pgfplotstablegetcolsof{\matrixfile} 
      \pgfmathtruncatemacro{\totcol}{\pgfplotsretval}
      
      \pgfplotstableforeachcolumn\matrixfile\as\col{
        \pgfplotstableforeachcolumnelement{\col}\of\matrixfile\as\colcnt{%
          \ifnum\colcnt=0
          \fill[black]($ -\pgfplotstablerow*(0,\cellht) + \col*(\cellwd,0) $) rectangle +(\cellwd,\cellht);
          \fi
          \ifnum\colcnt=1
          \fill[blu]($ -\pgfplotstablerow*(0,\cellht) + \col*(\cellwd,0) $) rectangle+(\cellwd,\cellht);
          \fi
          \ifnum\colcnt=2
          \fill[white]($ -\pgfplotstablerow*(0,\cellht) + \col*(\cellwd,0) $) rectangle +(\cellwd,\cellht);
          \draw ($ -\pgfplotstablerow*(0,\cellht) + (\cellwd*0.5,\cellht*0.5) + \col*(\cellwd,0) $) circle (0.1);
          \fi
          \ifnum\colcnt=3
          \fill[gray!50!white]($ -\pgfplotstablerow*(0,\cellht) + \col*(\cellwd,0) $) rectangle+(\cellwd,\cellht);
          \draw ($ -\pgfplotstablerow*(0,\cellht) + (\cellwd*0.5,\cellht*0.5) + \col*(\cellwd,0) $) circle (0.1);
          \fi
          \ifnum\colcnt=4
          \fi
        }
      }
      \ifnum#1=1
      \draw[thick,black] (0,-\totrow+1) rectangle (\totcol,1);
      \fi
}

\newcommand{\drawitt}{

      \pgfplotstablegetrowsof{\matrixfile} 
      \pgfmathtruncatemacro{\totrow}{\pgfplotsretval}
      \pgfplotstablegetcolsof{\matrixfile} 
      \pgfmathtruncatemacro{\totcol}{\pgfplotsretval}
      
      \pgfplotstableforeachcolumn\matrixfile\as\col{
        \pgfplotstableforeachcolumnelement{\col}\of\matrixfile\as\colcnt{%
          \ifnum\colcnt=0
          \fill[black]($ -\pgfplotstablerow*(0,\cellht) + \col*(\cellwd,0) $) rectangle +(\cellwd,\cellht);
          \fi
          \ifnum\colcnt=1
          \fill[white]($ -\pgfplotstablerow*(0,\cellht) + \col*(\cellwd,0) $) rectangle+(\cellwd,\cellht);
          \fi
          \ifnum\colcnt=2
          \fill[white]($ -\pgfplotstablerow*(0,\cellht) + \col*(\cellwd,0) $) rectangle +(\cellwd,\cellht);
          \draw ($ -\pgfplotstablerow*(0,\cellht) + (\cellwd*0.5,\cellht*0.5) + \col*(\cellwd,0) $) circle (0.1);
          \fi
          \ifnum\colcnt=3
          \fill[gray!50!white]($ -\pgfplotstablerow*(0,\cellht) + \col*(\cellwd,0) $) rectangle+(\cellwd,\cellht);
          \draw ($ -\pgfplotstablerow*(0,\cellht) + (\cellwd*0.5,\cellht*0.5) + \col*(\cellwd,0) $) circle (0.1);
          \fi
        }
      }
      \draw[black!20!white] (0,-\totrow+1) grid (\totcol,1);
      \draw[thick,black] (0,-\totrow+1) rectangle (\totcol,1);

}

\begin{filecontents}{pattern_koynnos.cvs}
1 1 1 0 0 0 1 1 1 0 0 0 1 1 1 0 0 0
0 1 0 1 1 1 0 1 0 1 1 1 0 1 0 1 1 1
0 0 0 0 1 0 0 0 0 0 1 0 0 0 0 0 1 0 
1 1 1 0 0 0 1 1 1 0 0 0 1 1 1 0 0 0
0 1 0 1 1 1 0 1 0 1 1 1 0 1 0 1 1 1
0 0 0 0 1 0 0 0 0 0 1 0 0 0 0 0 1 0 
1 1 1 0 0 0 1 1 1 0 0 0 1 1 1 0 0 0
0 1 0 1 1 1 0 1 0 1 1 1 0 1 0 1 1 1
0 0 0 0 1 0 0 0 0 0 1 0 0 0 0 0 1 0 
\end{filecontents}

\begin{filecontents}{pattern_kynnos.cvs}
0 0 1 1 0 1 0 0 1 1 0 1 0 0 1 1 0 1
0 0 1 0 1 1 0 0 1 0 1 1 0 0 1 0 1 1
1 1 0 0 0 0 1 1 0 0 0 0 1 1 0 0 0 0
0 1 0 1 1 0 0 1 0 1 1 0 0 1 0 1 1 0
1 0 0 1 1 0 1 0 0 1 1 0 1 0 0 1 1 0
1 1 0 0 0 0 1 1 0 0 0 0 1 1 0 0 0 0
0 0 1 1 0 1 0 0 1 1 0 1 0 0 1 1 0 1
0 0 1 0 1 1 0 0 1 0 1 1 0 0 1 0 1 1
1 1 0 0 0 0 1 1 0 0 0 0 1 1 0 0 0 0
0 1 0 1 1 0 0 1 0 1 1 0 0 1 0 1 1 0
1 0 0 1 1 0 1 0 0 1 1 0 1 0 0 1 1 0
1 1 0 0 0 0 1 1 0 0 0 0 1 1 0 0 0 0
0 0 1 1 0 1 0 0 1 1 0 1 0 0 1 1 0 1
0 0 1 0 1 1 0 0 1 0 1 1 0 0 1 0 1 1
1 1 0 0 0 0 1 1 0 0 0 0 1 1 0 0 0 0
0 1 0 1 1 0 0 1 0 1 1 0 0 1 0 1 1 0
1 0 0 1 1 0 1 0 0 1 1 0 1 0 0 1 1 0
1 1 0 0 0 0 1 1 0 0 0 0 1 1 0 0 0 0
\end{filecontents}

\begin{filecontents}{pattern_marssiorkesteri.cvs}
1 0 0 0 0 0 1 0 1 0 0 0 0 0 1 0 1 0 0 0 0 0 1
1 1 0 0 1 1 0 0 1 1 0 0 1 1 0 0 1 1 0 0 1 1 0
1 1 0 0 1 1 0 0 1 1 0 0 1 1 0 0 1 1 0 0 1 1 0
0 0 1 0 1 0 0 0 0 0 1 0 1 0 0 0 0 0 1 0 1 0 0
1 0 0 0 0 0 1 0 1 0 0 0 0 0 1 0 1 0 0 0 0 0 1
1 1 0 0 1 1 0 0 1 1 0 0 1 1 0 0 1 1 0 0 1 1 0
1 1 0 0 1 1 0 0 1 1 0 0 1 1 0 0 1 1 0 0 1 1 0
0 0 1 0 1 0 0 0 0 0 1 0 1 0 0 0 0 0 1 0 1 0 0
1 0 0 0 0 0 1 0 1 0 0 0 0 0 1 0 1 0 0 0 0 0 1
1 1 0 0 1 1 0 0 1 1 0 0 1 1 0 0 1 1 0 0 1 1 0
1 1 0 0 1 1 0 0 1 1 0 0 1 1 0 0 1 1 0 0 1 1 0
0 0 1 0 1 0 0 0 0 0 1 0 1 0 0 0 0 0 1 0 1 0 0
\end{filecontents}

\begin{filecontents}{orphan_28.cvs}
1 1 1 0 0 0 1 1 1 0 0 0 1 1 1 0 0 0 1 1 1 0 0 0 1 1 1 0 0 0 1 1 1 0 0 0 1 1 1 0 0 0 1 1 1 0 0 0 1 1 1 0 0 0 1 1 1 0 0 0 1 1 1 0 0 0 1 1 1
0 1 0 1 1 1 0 1 0 1 1 1 0 1 0 1 1 1 0 1 0 1 1 1 0 1 0 1 1 1 0 1 0 1 1 1 0 1 0 1 1 1 0 1 0 1 1 1 0 1 0 1 1 1 0 1 0 1 1 1 0 1 0 1 1 1 0 1 0
0 0 0 0 1 0 0 0 0 0 1 0 0 0 0 0 1 0 0 0 0 0 1 0 0 0 0 0 1 0 0 0 0 0 1 0 0 0 0 0 1 0 0 0 0 0 1 0 0 0 0 0 1 0 0 0 0 0 1 0 0 0 0 0 1 0 0 0 0
1 1 1 0 0 0 1 1 1 0 0 0 1 1 1 0 0 0 1 1 1 0 0 0 1 1 1 0 0 0 1 1 1 0 0 0 1 1 1 0 0 0 1 1 1 0 0 0 1 1 1 0 0 0 1 1 1 0 0 0 1 1 1 0 0 0 1 1 1
0 1 0 1 1 1 0 1 0 1 1 1 0 1 0 1 1 1 0 1 0 1 1 1 0 1 0 1 1 1 0 1 0 1 1 1 0 1 0 1 1 1 0 1 0 1 1 1 0 1 0 1 1 1 0 1 0 1 1 1 0 1 0 1 1 1 0 1 0
0 0 0 0 1 0 0 0 0 0 1 0 0 0 0 0 1 0 0 0 0 0 1 0 0 0 0 0 1 0 0 0 0 0 1 0 0 0 0 0 1 0 0 0 0 0 1 0 0 0 0 0 1 0 0 0 0 0 1 0 0 0 0 0 1 0 0 0 0
1 1 1 0 0 0 1 1 1 0 0 0 1 1 1 0 0 0 1 1 1 0 1 1 0 1 1 0 0 0 1 1 1 0 0 0 1 1 1 0 0 0 1 1 0 1 1 0 1 1 1 0 0 0 1 1 1 0 0 0 1 1 1 0 0 0 1 1 1
0 1 0 1 1 1 0 1 0 1 1 1 0 1 0 1 1 1 0 1 0 1 1 1 0 0 0 1 1 1 0 0 0 0 0 0 0 0 0 1 1 1 0 0 0 1 1 1 0 1 0 1 1 1 0 1 0 1 1 1 0 1 0 1 1 1 0 1 0
0 0 0 0 1 0 0 0 0 0 1 0 0 0 0 0 1 0 0 0 0 1 0 1 1 0 0 1 1 0 0 0 0 0 0 0 0 0 0 0 1 1 0 0 1 1 0 1 0 0 0 0 1 0 0 0 0 0 1 0 0 0 0 0 1 0 0 0 0
1 1 1 0 0 0 1 1 1 0 0 0 1 1 1 0 0 0 1 1 1 0 1 1 0 0 0 0 0 0 1 0 0 1 1 1 0 0 1 0 0 0 0 0 0 1 1 0 1 1 1 0 0 0 1 1 1 0 0 0 1 1 1 0 0 0 1 1 1
0 1 0 1 1 1 0 1 0 1 1 1 0 1 0 1 1 1 0 0 0 0 0 1 1 0 0 0 0 0 1 1 0 0 0 0 0 1 1 0 0 0 0 0 1 1 0 0 0 0 0 1 1 1 0 1 0 1 1 1 0 1 0 1 1 1 0 1 0
0 0 0 0 1 0 0 0 0 0 1 0 0 0 0 0 1 0 0 1 0 1 1 1 0 0 0 0 1 0 1 0 0 0 0 0 0 0 1 0 1 0 0 0 0 1 1 1 0 1 0 0 1 0 0 0 0 0 1 0 0 0 0 0 1 0 0 0 0
1 1 1 0 0 0 1 1 1 0 0 0 1 1 0 0 0 0 1 1 0 0 0 0 0 1 0 0 0 1 0 0 0 0 0 0 0 0 0 1 0 0 0 1 0 0 0 0 0 1 1 0 0 0 0 1 1 0 0 0 1 1 1 0 0 0 1 1 1
0 1 0 1 1 1 0 1 0 1 0 0 0 0 0 1 0 1 0 1 1 0 0 0 0 1 0 0 0 0 0 0 0 0 0 0 0 0 0 0 0 0 0 1 0 0 0 0 1 1 0 1 0 1 0 0 0 0 0 1 0 1 0 1 1 1 0 1 0
0 0 0 0 1 0 0 0 0 1 0 1 1 1 1 0 0 1 1 0 0 0 0 0 1 0 0 0 0 0 0 0 0 0 0 0 0 0 0 0 0 0 0 0 1 0 0 0 0 0 1 1 0 0 1 1 1 1 0 1 0 0 0 0 1 0 0 0 0
1 1 1 0 0 0 1 0 0 0 1 1 1 0 0 0 0 1 0 0 0 0 0 0 0 1 0 0 1 0 0 0 0 0 0 0 0 0 0 0 1 0 0 1 0 0 0 0 0 0 0 1 0 0 0 0 1 1 1 0 0 0 1 0 0 0 1 1 1
0 1 0 1 1 1 0 0 0 0 0 1 1 1 0 0 0 0 0 0 0 0 0 0 0 0 0 0 0 1 0 0 0 0 0 0 0 0 0 1 0 0 0 0 0 0 0 0 0 0 0 0 0 0 0 1 1 1 0 0 0 0 0 1 1 1 0 1 0
0 0 0 0 1 0 0 0 0 0 0 1 1 0 1 1 1 0 0 0 0 0 0 0 0 0 1 1 0 0 0 0 1 0 0 0 1 0 0 0 0 1 1 0 0 0 0 0 0 0 0 0 1 1 1 0 1 1 0 0 0 0 0 0 1 0 0 0 0
1 1 1 0 0 0 1 0 0 1 1 1 0 1 1 1 0 0 0 0 0 0 0 0 0 1 0 0 0 0 0 1 0 0 0 0 0 1 0 0 0 0 0 1 0 0 0 0 0 0 0 0 0 1 1 1 0 1 1 1 0 0 1 0 0 0 1 1 1
0 1 0 1 1 1 0 1 0 1 0 1 0 1 1 0 0 0 0 0 0 0 0 0 1 0 0 0 0 0 0 0 0 0 0 0 0 0 0 0 0 0 0 0 1 0 0 0 0 0 0 0 0 0 1 1 0 1 0 1 0 1 0 1 1 1 0 1 0
0 0 0 0 1 0 1 0 0 1 0 1 0 0 0 0 0 0 0 0 0 0 0 0 1 0 1 1 1 0 0 1 1 0 0 0 1 1 0 0 1 1 1 0 1 0 0 0 0 0 0 0 0 0 0 0 0 1 0 1 0 0 1 0 1 0 0 0 0
1 1 1 0 0 0 0 0 1 1 0 0 0 0 0 0 0 0 0 0 0 0 0 0 1 0 1 1 0 0 1 1 1 0 0 0 1 1 1 0 0 1 1 0 1 0 0 0 0 0 0 0 0 0 0 0 0 0 0 1 1 0 0 0 0 0 1 1 1
0 1 0 1 1 1 0 0 0 0 0 0 0 0 0 0 0 0 0 0 0 0 0 1 0 0 1 1 0 0 0 0 0 1 0 1 0 0 0 0 0 1 1 0 0 1 0 0 0 0 0 0 0 0 0 0 0 0 0 0 0 0 0 1 1 1 0 1 0
0 0 0 0 1 0 0 0 0 0 0 0 0 0 0 0 0 0 0 0 0 0 0 1 0 0 0 1 0 0 0 1 1 0 0 0 1 1 0 0 0 1 0 0 0 1 0 0 0 0 0 0 0 0 0 0 0 0 0 0 0 0 0 0 1 0 0 0 0
1 1 1 0 0 0 1 0 0 0 0 0 0 0 0 0 0 0 0 0 1 1 0 1 0 1 0 1 1 1 1 1 1 0 0 0 1 1 1 1 1 1 0 1 0 1 0 1 1 0 0 0 0 0 0 0 0 0 0 0 0 0 1 0 0 0 1 1 1
0 1 0 1 1 1 1 0 0 0 1 0 0 0 0 0 0 0 0 0 1 1 0 1 0 0 0 0 0 0 0 0 0 0 0 0 0 0 0 0 0 0 0 0 0 1 0 1 1 0 0 0 0 0 0 0 0 0 1 0 0 0 1 1 1 1 0 1 0
0 0 0 0 1 0 1 0 0 0 1 0 1 0 0 0 0 0 0 0 1 1 0 0 1 0 1 0 0 1 0 0 0 0 0 0 0 0 0 1 0 0 1 0 1 0 0 1 1 0 0 0 0 0 0 0 1 0 1 0 0 0 1 0 1 0 0 0 0
1 1 1 0 0 0 0 0 0 0 1 1 0 0 0 0 0 0 0 0 0 1 0 1 1 0 0 0 0 1 1 0 0 0 0 0 0 0 1 1 0 0 0 0 1 1 0 1 0 0 0 0 0 0 0 0 0 1 1 0 0 0 0 0 0 0 1 1 1
0 1 0 1 1 1 0 0 0 0 1 0 1 0 0 0 0 0 0 0 1 0 0 0 0 0 0 0 0 0 1 1 1 0 0 0 1 1 1 0 0 0 0 0 0 0 0 0 1 0 0 0 0 0 0 0 1 0 1 0 0 0 0 1 1 1 0 1 0
0 0 0 0 1 0 0 0 0 0 1 1 0 0 0 0 0 0 0 0 0 0 0 0 1 1 0 0 0 0 1 0 0 1 0 1 0 0 1 0 0 0 0 1 1 0 0 0 0 0 0 0 0 0 0 0 0 1 1 0 0 0 0 0 1 0 0 0 0
1 1 1 0 0 0 1 0 0 0 1 1 0 0 0 0 0 0 0 0 0 0 1 0 1 0 0 0 0 0 0 1 1 1 0 1 1 1 0 0 0 0 0 0 1 0 1 0 0 0 0 0 0 0 0 0 0 1 1 0 0 0 1 0 0 0 1 1 1
0 1 0 1 1 1 1 0 0 0 0 1 1 0 0 1 0 0 0 0 0 0 1 1 1 0 0 0 0 0 0 0 1 0 0 0 1 0 0 0 0 0 0 0 1 1 1 0 0 0 0 0 0 1 0 0 1 1 0 0 0 0 1 1 1 1 0 1 0
0 0 0 0 1 0 1 0 0 0 1 1 1 0 0 1 1 0 0 0 0 0 0 0 0 0 0 0 0 0 0 0 0 0 0 0 0 0 0 0 0 0 0 0 0 0 0 0 0 0 0 0 1 1 0 0 1 1 1 0 0 0 1 0 1 0 0 0 0
1 1 1 0 0 0 0 0 0 0 0 0 0 0 0 1 1 0 0 0 0 0 0 0 0 0 0 0 0 0 0 0 0 1 1 1 0 0 0 0 0 0 0 0 0 0 0 0 0 0 0 0 1 1 0 0 0 0 0 0 0 0 0 0 0 0 1 1 1
0 1 0 1 1 1 0 0 0 0 0 0 0 1 0 0 0 0 0 0 0 0 0 0 0 0 0 0 0 0 0 1 0 0 0 0 0 1 0 0 0 0 0 0 0 0 0 0 0 0 0 0 0 0 0 1 0 0 0 0 0 0 0 1 1 1 0 1 0
0 0 0 0 1 0 1 1 1 1 1 1 1 1 0 0 0 0 0 0 0 0 0 0 0 0 0 0 0 0 1 0 0 0 0 0 0 0 1 0 0 0 0 0 0 0 0 0 0 0 0 0 0 0 0 1 1 1 1 1 1 1 1 0 1 0 0 0 0
1 1 1 0 0 0 1 1 1 1 1 1 0 0 0 1 0 0 0 0 0 0 0 0 0 1 0 0 0 0 1 1 0 1 1 1 0 1 1 0 0 0 0 1 0 0 0 0 0 0 0 0 0 1 0 0 0 1 1 1 1 1 1 0 0 0 1 1 1
0 1 0 1 1 1 1 1 1 0 0 0 1 1 1 0 0 0 0 0 0 0 0 0 0 1 0 0 0 0 0 0 0 0 0 0 0 0 0 0 0 0 0 1 0 0 0 0 0 0 0 0 0 0 1 1 1 0 0 0 1 1 1 1 1 1 0 1 0
0 0 0 0 1 0 0 0 0 1 1 0 1 0 0 1 0 0 0 0 0 0 0 0 0 1 0 0 0 0 0 0 0 0 0 0 0 0 0 0 0 0 0 1 0 0 0 0 0 0 0 0 0 1 0 0 1 0 1 1 0 0 0 0 1 0 0 0 0
1 1 1 0 0 0 1 0 1 0 0 0 0 0 1 0 0 0 0 0 1 1 0 0 0 0 0 0 0 0 0 0 0 1 1 1 0 0 0 0 0 0 0 0 0 0 0 1 1 0 0 0 0 0 1 0 0 0 0 0 1 0 1 0 0 0 1 1 1
0 1 0 1 1 1 0 1 0 1 0 0 0 1 0 0 0 0 0 0 1 1 0 0 0 0 0 0 0 0 0 0 0 0 0 0 0 0 0 0 0 0 0 0 0 0 0 1 1 0 0 0 0 0 0 1 0 0 0 1 0 1 0 1 1 1 0 1 0
0 0 0 0 1 0 0 1 0 0 1 1 1 0 0 0 0 0 0 0 1 0 0 0 0 0 0 0 0 0 1 1 0 0 0 0 0 1 1 0 0 0 0 0 0 0 0 0 1 0 0 0 0 0 0 0 1 1 1 0 0 1 0 0 1 0 0 0 0
1 1 1 0 0 0 0 1 1 0 0 0 0 0 0 0 0 0 0 1 1 0 0 0 0 0 0 0 0 1 0 0 0 0 0 0 0 0 0 1 0 0 0 0 0 0 0 0 1 1 0 0 0 0 0 0 0 0 0 0 1 1 0 0 0 0 1 1 1
0 1 0 1 1 1 0 1 0 0 0 0 1 1 1 0 0 1 1 0 0 0 0 0 0 0 0 0 0 0 0 0 0 0 0 0 0 0 0 0 0 0 0 0 0 0 0 0 0 0 1 1 0 0 1 1 1 0 0 0 0 1 0 1 1 1 0 1 0
0 0 0 0 1 0 0 0 0 1 0 1 1 0 1 1 0 1 1 0 0 0 0 1 0 0 0 0 0 0 0 0 0 0 0 0 0 0 0 0 0 0 0 0 0 1 0 0 0 0 1 1 0 1 1 0 1 1 0 1 0 0 0 0 1 0 0 0 0
1 1 1 0 0 0 1 0 0 0 1 0 1 0 1 1 1 1 1 0 1 0 0 0 1 1 0 0 0 0 0 0 0 0 0 0 0 0 0 0 0 0 0 1 1 0 0 0 1 0 1 1 1 1 1 0 1 0 1 0 0 0 1 0 0 0 1 1 1
0 1 0 1 1 1 0 1 0 1 1 0 0 1 1 1 1 1 0 0 1 0 1 0 0 0 1 1 0 0 0 0 0 0 0 0 0 0 0 0 0 1 1 0 0 0 1 0 1 0 0 1 1 1 1 1 0 0 1 1 0 1 0 1 1 1 0 1 0
0 0 0 0 1 0 0 0 0 0 1 0 1 0 1 0 0 0 1 0 1 0 1 1 0 0 1 1 0 0 1 0 0 0 0 0 0 0 1 0 0 1 1 0 0 1 1 0 1 0 1 0 0 0 1 0 1 0 1 0 0 0 0 0 1 0 0 0 0
1 1 1 0 0 0 1 1 1 0 0 0 1 0 0 0 1 0 1 0 1 0 0 0 1 1 1 1 0 0 1 0 0 0 0 0 0 0 1 0 0 1 1 1 1 0 0 0 1 0 1 0 1 0 0 0 1 0 0 0 1 1 1 0 0 0 1 1 1
0 1 0 1 1 1 0 1 0 1 1 1 0 1 0 1 1 0 1 0 0 0 1 1 0 0 0 0 0 1 1 0 0 0 0 0 0 0 1 1 0 0 0 0 0 1 1 0 0 0 1 0 1 1 0 1 0 1 1 1 0 1 0 1 1 1 0 1 0
0 0 0 0 1 0 0 0 0 0 1 0 0 0 0 0 1 0 0 0 0 0 1 0 0 0 0 0 1 1 0 0 0 0 0 0 0 0 0 1 1 0 0 0 0 0 1 0 0 0 0 0 1 0 0 0 0 0 1 0 0 0 0 0 1 0 0 0 0
1 1 1 0 0 0 1 1 1 0 0 0 1 1 1 0 0 0 1 0 0 0 1 0 0 0 0 1 0 1 1 0 0 0 0 0 0 0 1 1 0 1 0 0 0 0 1 0 0 0 1 0 0 0 1 1 1 0 0 0 1 1 1 0 0 0 1 1 1
0 1 0 1 1 1 0 1 0 1 1 1 0 1 0 1 1 1 0 1 0 1 1 0 0 1 0 1 0 0 0 1 0 0 0 0 0 1 0 0 0 1 0 1 0 0 1 1 0 1 0 1 1 1 0 1 0 1 1 1 0 1 0 1 1 1 0 1 0
0 0 0 0 1 0 0 0 0 0 1 0 0 0 0 0 1 0 0 0 0 0 1 0 1 0 0 1 1 1 1 1 0 0 1 0 0 1 1 1 1 1 0 0 1 0 1 0 0 0 0 0 1 0 0 0 0 0 1 0 0 0 0 0 1 0 0 0 0
1 1 1 0 0 0 1 1 1 0 0 0 1 1 1 0 0 0 1 1 1 0 0 0 1 0 0 0 0 0 0 0 1 1 0 1 1 0 0 0 0 0 0 0 1 0 0 0 1 1 1 0 0 0 1 1 1 0 0 0 1 1 1 0 0 0 1 1 1
0 1 0 1 1 1 0 1 0 1 1 1 0 1 0 1 1 1 0 1 0 1 1 1 0 1 0 1 1 0 1 1 0 0 0 0 0 1 1 0 1 1 0 1 0 1 1 1 0 1 0 1 1 1 0 1 0 1 1 1 0 1 0 1 1 1 0 1 0
0 0 0 0 1 0 0 0 0 0 1 0 0 0 0 0 1 0 0 0 0 0 1 0 0 0 0 0 1 0 0 0 0 0 1 0 0 0 0 0 1 0 0 0 0 0 1 0 0 0 0 0 1 0 0 0 0 0 1 0 0 0 0 0 1 0 0 0 0
1 1 1 0 0 0 1 1 1 0 0 0 1 1 1 0 0 0 1 1 1 0 0 0 1 1 1 0 0 0 1 0 0 0 0 0 0 0 1 0 0 0 1 1 1 0 0 0 1 1 1 0 0 0 1 1 1 0 0 0 1 1 1 0 0 0 1 1 1
0 1 0 1 1 1 0 1 0 1 1 1 0 1 0 1 1 1 0 1 0 1 1 1 0 1 0 1 1 1 0 1 0 1 1 1 0 1 0 1 1 1 0 1 0 1 1 1 0 1 0 1 1 1 0 1 0 1 1 1 0 1 0 1 1 1 0 1 0
0 0 0 0 1 0 0 0 0 0 1 0 0 0 0 0 1 0 0 0 0 0 1 0 0 0 0 0 1 0 0 0 0 0 1 0 0 0 0 0 1 0 0 0 0 0 1 0 0 0 0 0 1 0 0 0 0 0 1 0 0 0 0 0 1 0 0 0 0
1 1 1 0 0 0 1 1 1 0 0 0 1 1 1 0 0 0 1 1 1 0 0 0 1 1 1 0 0 0 1 1 1 0 0 0 1 1 1 0 0 0 1 1 1 0 0 0 1 1 1 0 0 0 1 1 1 0 0 0 1 1 1 0 0 0 1 1 1
0 1 0 1 1 1 0 1 0 1 1 1 0 1 0 1 1 1 0 1 0 1 1 1 0 1 0 1 1 1 0 1 0 1 1 1 0 1 0 1 1 1 0 1 0 1 1 1 0 1 0 1 1 1 0 1 0 1 1 1 0 1 0 1 1 1 0 1 0
0 0 0 0 1 0 0 0 0 0 1 0 0 0 0 0 1 0 0 0 0 0 1 0 0 0 0 0 1 0 0 0 0 0 1 0 0 0 0 0 1 0 0 0 0 0 1 0 0 0 0 0 1 0 0 0 0 0 1 0 0 0 0 0 1 0 0 0 0
\end{filecontents}

\begin{filecontents}{domain_intersection.cvs}
0 0 0 0 0 0 0 0 0 0 0 0 0 0 0 0 0 0 0 0 0 0 0 0 0 0 0 0 0 0 0 0
0 0 0 0 0 0 0 0 0 0 0 0 0 0 0 0 0 0 0 0 0 0 0 0 0 0 0 0 0 0 0 0
0 0 0 0 0 0 0 0 0 0 0 0 0 0 0 0 0 0 0 0 0 0 0 0 0 0 0 0 0 0 0 0
0 0 0 0 0 0 0 0 0 0 0 0 0 0 0 0 0 0 0 0 0 0 0 0 0 0 0 0 0 0 0 0
0 0 0 0 0 0 0 0 0 0 0 0 0 0 0 0 0 0 0 0 0 0 0 0 0 0 0 0 0 0 0 0
0 0 0 0 0 1 1 1 1 1 1 1 1 1 1 1 1 1 1 1 1 1 1 1 1 1 1 0 0 0 0 0
0 0 0 0 0 1 1 1 1 1 1 1 1 1 1 1 1 1 1 1 1 1 1 1 1 1 1 0 0 0 0 0
0 0 0 0 0 1 1 1 1 1 1 1 1 1 1 1 1 1 1 1 1 1 1 1 1 1 1 0 0 0 0 0
0 0 0 0 1 1 1 1 1 1 1 1 1 1 1 1 1 1 1 1 1 1 1 1 1 1 1 1 0 0 0 0
0 0 0 0 1 1 1 1 1 1 1 1 1 1 1 1 1 1 1 1 1 1 1 1 1 1 1 1 0 0 0 0
0 0 0 0 1 1 1 1 1 1 1 1 1 1 1 1 1 1 1 1 1 1 1 1 1 1 1 1 0 0 0 0
0 0 1 1 1 1 1 1 1 1 1 1 1 1 1 1 1 1 1 1 1 1 1 1 1 1 1 1 1 1 0 0
0 0 1 1 1 1 1 1 1 1 1 1 1 1 1 1 1 1 1 1 1 1 1 1 1 1 1 1 1 1 0 0
0 1 1 1 1 1 1 1 1 1 1 1 1 1 1 1 1 1 1 1 1 1 1 1 1 1 1 1 1 1 1 0
0 1 1 1 1 1 1 1 1 1 1 1 1 1 1 1 1 1 1 1 1 1 1 1 1 1 1 1 1 1 1 0
0 1 1 1 1 1 1 1 1 1 1 1 1 1 1 1 1 1 1 1 1 1 1 1 1 1 1 1 1 1 1 0
0 1 1 1 1 1 1 1 1 1 1 1 1 1 1 1 1 1 1 1 1 1 1 1 1 1 1 1 1 1 1 0
0 1 1 1 1 1 1 1 1 1 1 1 1 1 1 1 1 1 1 1 1 1 1 1 1 1 1 1 1 1 1 0
0 1 1 1 1 1 1 1 1 1 1 1 1 1 1 1 1 1 1 1 1 1 1 1 1 1 1 1 1 1 1 0
0 1 1 1 1 1 1 1 1 1 1 1 1 1 1 1 1 1 1 1 1 1 1 1 1 1 1 1 1 1 1 0
0 0 1 1 1 1 1 1 1 1 1 1 1 1 1 1 1 1 1 1 1 1 1 1 1 1 1 1 1 1 0 0
0 0 0 1 1 1 1 1 1 1 1 1 1 1 1 1 1 1 1 1 1 1 1 1 1 1 1 1 1 0 0 0
0 0 0 1 1 1 1 1 1 1 1 1 1 1 1 1 1 1 1 1 1 1 1 1 1 1 1 1 1 0 0 0
0 0 0 1 1 1 1 1 1 1 1 1 1 1 1 1 1 1 1 1 1 1 1 1 1 1 1 1 1 0 0 0
0 0 0 1 1 1 1 1 1 1 1 1 1 1 1 1 1 1 1 1 1 1 1 1 1 1 1 1 1 0 0 0
0 0 0 0 0 0 0 0 0 0 0 0 0 0 0 0 0 0 0 0 0 0 0 0 0 0 0 0 0 0 0 0
0 0 0 0 0 0 0 0 0 0 0 0 0 0 0 0 0 0 0 0 0 0 0 0 0 0 0 0 0 0 0 0
0 0 0 0 0 0 0 0 0 0 0 0 0 0 0 0 0 0 0 0 0 0 0 0 0 0 0 0 0 0 0 0
0 0 0 0 0 0 0 0 0 0 0 0 0 0 0 0 0 0 0 0 0 0 0 0 0 0 0 0 0 0 0 0
\end{filecontents}

\begin{filecontents}{selfenforcingpatch.cvs}
4 4 4 4 4 1 0 0 0 0 1 1 0 0 0 0 1 1 0 0 0 0 1 1 0 4 4 4
4 4 1 0 0 1 0 1 1 0 0 1 0 1 1 0 0 1 0 1 1 0 0 1 0 1 4 4
4 1 1 0 1 0 0 1 1 0 1 0 0 1 1 0 1 0 0 1 1 0 1 0 0 1 1 4
0 0 0 0 1 1 0 0 0 0 1 1 0 0 0 0 1 1 0 0 0 0 1 1 0 0 0 4
1 1 0 1 0 0 1 1 0 1 0 0 1 1 0 1 0 0 1 1 0 1 0 0 1 1 0 4
1 0 1 1 0 0 1 0 1 1 0 0 1 0 1 1 0 0 1 0 1 1 0 0 1 0 1 1
0 0 0 0 1 1 0 0 0 0 1 1 0 0 0 0 1 1 0 0 0 0 1 1 0 0 0 0
0 1 1 0 0 1 0 1 1 0 0 1 0 1 1 0 0 1 0 1 1 0 0 1 0 1 1 0
0 1 1 0 1 0 0 1 1 0 1 0 0 1 1 0 1 0 0 1 1 0 1 0 0 1 1 0
0 0 0 0 1 1 0 0 0 0 1 1 0 0 0 0 1 1 0 0 0 0 1 1 0 0 0 0
1 1 0 1 0 0 1 1 0 1 0 0 1 1 0 1 0 0 1 1 0 1 0 0 1 1 0 1
1 0 1 1 0 0 1 0 1 1 0 0 1 0 1 1 0 0 1 0 1 1 0 0 1 0 1 1
0 0 0 0 1 1 0 0 0 0 1 1 0 0 0 0 1 1 0 0 0 0 1 1 0 0 0 0
0 1 1 0 0 1 0 1 1 0 0 1 0 1 1 0 0 1 0 1 1 0 0 1 0 1 1 0
0 1 1 0 1 0 0 1 1 0 1 0 0 1 1 0 1 0 0 1 1 0 1 0 0 1 1 0
0 0 0 0 1 1 0 0 0 0 1 1 0 0 0 0 1 1 0 0 0 0 1 1 0 0 0 0
1 1 0 1 0 0 1 1 0 1 0 0 1 1 0 1 0 0 1 1 0 1 0 0 1 1 0 1
4 0 1 1 0 0 1 0 1 1 0 0 1 0 1 1 0 0 1 0 1 1 0 0 1 0 1 1
4 0 0 0 1 1 0 0 0 0 1 1 0 0 0 0 1 1 0 0 0 0 1 1 0 0 0 0
4 1 1 0 0 1 0 1 1 0 0 1 0 1 1 0 0 1 0 1 1 0 0 1 0 1 1 4
4 4 1 0 1 0 0 1 1 0 1 0 0 1 1 0 1 0 0 1 1 0 1 0 0 1 4 4
4 4 4 0 1 1 0 0 0 0 1 1 0 0 0 0 1 1 0 0 0 0 1 4 4 4 4 4
\end{filecontents}

\begin{filecontents}{selfenforcingstill.cvs}
0 0 0 0 0 0 0 0 0 0 0 0 1 1 0 0 0 0 1 1 0 0 0 0 0 0 0 0 0 0 0 0 0 0
0 0 0 0 0 0 0 0 0 0 0 0 1 0 0 0 0 0 1 0 0 0 0 0 0 0 0 0 0 0 0 0 0 0
0 0 0 0 0 0 0 1 1 0 0 0 0 0 1 0 0 0 0 0 1 0 0 0 0 0 0 0 1 1 0 0 0 0
0 0 0 0 0 0 1 0 1 0 0 0 0 1 1 0 0 0 0 1 1 0 0 0 0 1 1 0 0 1 0 0 0 0
0 0 1 1 0 1 0 0 1 0 1 1 0 0 1 0 1 1 0 0 1 0 1 1 0 0 1 0 1 0 0 0 0 0
0 0 1 0 1 1 0 1 0 0 1 1 0 1 0 0 1 1 0 1 0 0 1 1 0 1 0 0 1 1 0 0 0 0
0 0 0 0 0 0 0 1 1 0 0 0 0 1 1 0 0 0 0 1 1 0 0 0 0 1 1 0 0 0 1 0 0 0
0 0 0 1 1 0 1 0 0 1 1 0 1 0 0 1 1 0 1 0 0 1 1 0 1 0 0 1 1 0 0 1 0 0
0 0 0 1 0 1 1 0 0 1 0 1 1 0 0 1 0 1 1 0 0 1 0 1 1 0 0 1 0 1 1 1 0 0
0 0 0 0 0 0 0 1 1 0 0 0 0 1 1 0 0 0 0 1 1 0 0 0 0 1 1 0 0 0 0 0 0 0
0 0 0 0 1 1 0 0 1 0 1 1 0 0 1 0 1 1 0 0 1 0 1 1 0 0 1 0 1 1 0 0 0 0
0 0 0 0 1 1 0 1 0 0 1 1 0 1 0 0 1 1 0 1 0 0 1 1 0 1 0 0 1 1 0 0 0 0
0 0 0 0 0 0 0 1 1 0 0 0 0 1 1 0 0 0 0 1 1 0 0 0 0 1 1 0 0 0 0 0 1 1
0 0 1 1 1 0 1 0 0 1 1 0 1 0 0 1 1 0 1 0 0 1 1 0 1 0 0 1 1 0 1 0 0 1
1 0 0 1 0 1 1 0 0 1 0 1 1 0 0 1 0 1 1 0 0 1 0 1 1 0 0 1 0 1 1 1 0 0
1 1 0 0 0 0 0 1 1 0 0 0 0 1 1 0 0 0 0 1 1 0 0 0 0 1 1 0 0 0 0 0 0 0
0 0 0 0 1 1 0 0 1 0 1 1 0 0 1 0 1 1 0 0 1 0 1 1 0 0 1 0 1 1 0 0 0 0
0 0 0 0 1 1 0 1 0 0 1 1 0 1 0 0 1 1 0 1 0 0 1 1 0 1 0 0 1 1 0 0 0 0
0 0 0 0 0 0 0 1 1 0 0 0 0 1 1 0 0 0 0 1 1 0 0 0 0 1 1 0 0 0 0 0 0 0
0 0 1 1 1 0 1 0 0 1 1 0 1 0 0 1 1 0 1 0 0 1 1 0 1 0 0 1 1 0 1 0 0 0
0 0 1 0 0 1 1 0 0 1 0 1 1 0 0 1 0 1 1 0 0 1 0 1 1 0 0 1 0 1 1 0 0 0
0 0 0 1 0 0 0 1 1 0 0 0 0 1 1 0 0 0 0 1 1 0 0 0 0 1 1 0 0 0 0 0 0 0
0 0 0 0 1 1 0 0 1 0 1 1 0 0 1 0 1 1 0 0 1 0 1 1 0 0 1 0 1 1 0 1 0 0
0 0 0 0 0 1 0 1 0 0 1 1 0 1 0 0 1 1 0 1 0 0 1 1 0 1 0 0 1 0 1 1 0 0
0 0 0 0 1 0 0 1 1 0 0 0 0 1 1 0 0 0 0 1 1 0 0 0 0 1 0 1 0 0 0 0 0 0
0 0 0 0 1 1 0 0 0 0 0 0 0 1 0 0 0 0 0 1 0 0 0 0 0 1 1 0 0 0 0 0 0 0
0 0 0 0 0 0 0 0 0 0 0 0 0 0 0 1 0 0 0 0 0 1 0 0 0 0 0 0 0 0 0 0 0 0
0 0 0 0 0 0 0 0 0 0 0 0 0 0 1 1 0 0 0 0 1 1 0 0 0 0 0 0 0 0 0 0 0 0
\end{filecontents}

\begin{filecontents}{kynnoshole.cvs}
0 0 0 0 0 0 0 0 0 0 0 0 0 0 0 0 0 0 0 0 0 0 0 0 0 0 0 0 0 0 0 0 0 0 0 0 0 0 0 0 0 0 0 0 0 0 0 0 0 0 0 0 0 0 0 0 0 0 0 0 0 0 0 0 0 0
0 0 0 0 0 0 0 0 0 0 0 0 0 0 0 0 0 0 0 0 0 0 0 0 0 0 0 0 0 0 0 0 0 0 0 0 0 0 0 0 0 0 0 0 0 0 0 0 0 0 0 0 0 0 0 0 0 0 0 0 0 0 0 0 0 0
0 0 0 0 1 1 0 1 0 0 1 1 0 1 0 0 1 1 0 1 0 0 1 1 0 1 0 0 1 1 0 1 0 0 1 1 0 1 0 0 1 1 0 1 0 0 1 1 0 1 0 0 1 1 0 1 0 0 1 1 0 1 0 0 0 0
0 0 0 0 1 0 1 1 0 0 1 0 1 1 0 0 1 0 1 1 0 0 1 0 1 1 0 0 1 0 1 1 0 0 1 0 1 1 0 0 1 0 1 1 0 0 1 0 1 1 0 0 1 0 1 1 0 0 1 0 1 1 0 0 0 0
0 0 1 1 0 0 0 0 1 1 0 0 0 0 1 1 0 0 0 0 1 1 0 0 0 0 1 1 0 0 0 0 1 1 0 0 0 0 1 1 0 0 0 0 1 1 0 0 0 0 1 1 0 0 0 0 1 1 0 0 0 0 1 1 0 0
0 0 0 1 0 1 1 0 0 1 0 1 1 0 0 1 0 1 1 0 0 1 0 1 1 0 0 1 0 1 1 0 0 1 0 1 1 0 0 1 0 1 1 0 0 1 0 1 1 0 0 1 0 1 1 0 0 1 0 1 1 0 0 1 0 0
0 0 1 0 0 1 1 0 1 0 0 1 1 0 1 0 0 1 1 0 1 0 0 1 1 0 1 0 0 1 1 0 1 0 0 1 1 0 1 0 0 1 1 0 1 0 0 1 1 0 1 0 0 1 1 0 1 0 0 1 1 0 1 0 0 0
0 0 1 1 0 0 0 0 1 1 0 0 0 0 1 1 0 0 0 0 1 1 0 0 0 0 1 1 0 0 0 0 1 1 0 0 0 0 1 1 0 0 0 0 1 1 0 0 0 0 1 1 0 0 0 0 1 1 0 0 0 0 1 1 0 0
0 0 0 0 1 1 0 1 0 0 1 1 0 1 0 0 1 1 0 1 0 0 1 1 0 1 0 0 1 1 0 1 0 0 1 1 0 1 0 0 1 1 0 1 0 0 1 1 0 1 0 0 1 1 0 1 0 0 1 1 0 1 0 0 0 0
0 0 0 0 1 0 1 1 0 0 1 0 1 1 0 0 1 0 1 1 0 0 1 0 1 1 0 0 1 0 1 1 0 0 1 0 1 1 0 0 1 0 1 1 0 0 1 0 1 1 0 0 1 0 1 1 0 0 1 0 1 1 0 0 0 0
0 0 1 1 0 0 0 0 1 1 0 0 0 0 1 1 0 0 0 0 1 1 0 0 0 0 1 1 0 0 0 0 1 1 0 0 0 0 1 1 0 0 0 0 1 1 0 0 0 0 1 1 0 0 0 0 1 1 0 0 0 0 1 1 0 0
0 0 0 1 0 1 1 0 0 1 0 1 1 0 0 1 0 1 1 0 0 1 0 1 1 0 0 1 0 1 1 0 0 1 0 1 1 0 0 1 0 1 1 0 0 1 0 1 1 0 0 1 0 1 1 0 0 1 0 1 1 0 0 1 0 0
0 0 1 0 0 1 1 0 1 0 0 1 1 0 1 0 0 1 1 0 1 0 0 1 1 0 1 0 0 1 1 0 1 0 0 1 1 0 1 0 0 1 1 0 1 0 0 1 1 0 1 0 0 1 1 0 1 0 0 1 1 0 1 0 0 0
0 0 1 1 0 0 0 0 1 1 0 0 0 0 1 1 0 0 0 0 1 1 0 0 0 0 1 1 0 0 0 0 1 1 0 0 0 0 1 1 0 0 0 0 1 1 0 0 0 0 1 1 0 0 0 0 1 1 0 0 0 0 1 1 0 0
0 0 0 0 1 1 0 1 0 0 1 1 0 1 0 0 1 1 0 1 0 0 1 1 0 1 0 0 1 1 0 1 0 0 1 1 0 1 0 0 1 1 0 1 0 0 1 1 0 1 0 0 1 1 0 1 0 0 1 1 0 1 0 0 0 0
0 0 0 0 1 0 1 1 0 0 1 0 1 1 0 0 1 0 1 1 0 0 1 0 1 1 0 0 1 0 1 1 0 0 1 0 1 1 0 0 1 0 1 1 0 0 1 0 1 1 0 0 1 0 1 1 0 0 1 0 1 1 0 0 0 0
0 0 1 1 0 0 0 0 1 1 0 0 0 0 1 1 0 0 0 0 0 0 0 0 0 0 0 0 0 0 0 0 0 0 0 0 0 0 0 0 0 0 0 0 0 0 0 0 0 0 1 1 0 0 0 0 1 1 0 0 0 0 1 1 0 0
0 0 0 1 0 1 1 0 0 1 0 1 1 0 0 1 0 1 1 0 1 0 0 0 0 0 0 0 0 0 0 0 0 0 0 0 0 0 0 0 0 0 0 0 0 0 0 1 1 0 0 1 0 1 1 0 0 1 0 1 1 0 0 1 0 0
0 0 1 0 0 1 1 0 1 0 0 1 1 0 1 0 0 1 0 1 1 0 0 0 0 0 0 0 0 0 0 0 0 0 0 0 0 0 0 0 0 0 0 0 0 0 0 0 1 0 1 0 0 1 1 0 1 0 0 1 1 0 1 0 0 0
0 0 1 1 0 0 0 0 1 1 0 0 0 0 1 1 0 0 0 0 0 0 0 0 0 0 0 0 0 0 0 0 0 0 0 0 0 0 0 0 0 0 0 0 0 0 0 1 0 0 1 1 0 0 0 0 1 1 0 0 0 0 1 1 0 0
0 0 0 0 1 1 0 1 0 0 1 1 0 1 0 0 0 0 0 0 0 0 0 0 0 0 0 0 0 0 0 0 0 0 0 0 0 0 0 0 0 0 0 0 0 0 0 1 1 0 0 0 1 1 0 1 0 0 1 1 0 1 0 0 0 0
0 0 0 0 1 0 1 1 0 0 1 0 1 1 0 0 0 0 0 0 0 0 0 0 0 0 0 0 0 0 0 0 0 0 0 0 0 0 0 0 0 0 0 0 0 0 0 0 0 0 0 0 1 0 1 1 0 0 1 0 1 1 0 0 0 0
0 0 1 1 0 0 0 0 1 1 0 0 0 0 1 1 0 0 0 0 0 0 0 0 0 0 0 0 0 0 0 0 0 0 0 0 0 0 0 0 0 0 0 0 0 0 0 0 0 0 1 1 0 0 0 0 1 1 0 0 0 0 1 1 0 0
0 0 0 1 0 1 1 0 0 1 0 1 1 0 0 1 0 0 0 0 0 0 0 0 0 0 0 0 0 0 0 0 0 0 0 0 0 0 0 0 0 0 0 0 0 0 0 0 0 0 0 1 0 1 1 0 0 1 0 1 1 0 0 1 0 0
0 0 1 0 0 1 1 0 1 0 0 1 1 0 1 0 0 0 0 0 0 0 0 0 0 0 0 0 0 0 0 0 0 0 0 0 0 0 0 0 0 0 0 0 0 0 0 0 0 0 1 0 0 1 1 0 1 0 0 1 1 0 1 0 0 0
0 0 1 1 0 0 0 0 1 1 0 0 0 0 1 1 0 0 0 0 0 0 0 0 0 0 0 0 0 0 0 0 0 0 0 0 0 0 0 0 0 0 0 0 0 0 0 0 0 0 1 1 0 0 0 0 1 1 0 0 0 0 1 1 0 0
0 0 0 0 1 1 0 1 0 0 1 1 0 1 0 0 0 0 0 0 0 0 0 0 0 0 0 0 0 0 0 0 0 0 0 0 0 0 0 0 0 0 0 0 0 0 0 0 0 0 0 0 1 1 0 1 0 0 1 1 0 1 0 0 0 0
0 0 0 0 1 0 1 1 0 0 1 0 1 1 0 0 0 0 0 0 0 0 0 0 0 0 0 0 0 0 0 0 0 0 0 0 0 0 0 0 0 0 0 0 0 0 0 0 0 0 0 0 1 0 1 1 0 0 1 0 1 1 0 0 0 0
0 0 1 1 0 0 0 0 1 1 0 0 0 0 1 1 0 0 0 0 0 0 0 0 0 0 0 0 0 0 0 0 0 0 0 0 0 0 0 0 0 0 0 0 0 0 0 0 0 0 1 1 0 0 0 0 1 1 0 0 0 0 1 1 0 0
0 0 0 1 0 1 1 0 0 1 0 1 1 0 0 1 0 0 0 0 0 0 0 0 0 0 0 0 0 0 0 0 0 0 0 0 0 0 0 0 0 0 0 0 0 0 0 0 0 0 0 1 0 1 1 0 0 1 0 1 1 0 0 1 0 0
0 0 1 0 0 1 1 0 1 0 0 1 1 0 1 0 0 0 0 0 0 0 0 0 0 0 0 0 0 0 0 0 0 0 0 0 0 0 0 0 0 0 0 0 0 0 0 0 0 0 1 0 0 1 1 0 1 0 0 1 1 0 1 0 0 0
0 0 1 1 0 0 0 0 1 1 0 0 0 0 1 1 0 0 0 0 0 0 0 0 0 0 0 0 0 0 0 0 0 0 0 0 0 0 0 0 0 0 0 0 0 0 0 0 0 0 1 1 0 0 0 0 1 1 0 0 0 0 1 1 0 0
0 0 0 0 1 1 0 1 0 0 1 1 0 1 0 0 0 0 0 0 0 0 0 0 0 0 0 0 0 0 0 0 0 0 0 0 0 0 0 0 0 0 0 0 0 0 0 0 0 0 0 0 1 1 0 1 0 0 1 1 0 1 0 0 0 0
0 0 0 0 1 0 1 1 0 0 1 0 1 1 0 0 0 1 1 0 0 0 0 0 0 0 0 0 0 0 0 0 0 0 0 0 0 0 0 0 0 0 0 0 0 0 0 0 0 0 0 0 1 0 1 1 0 0 1 0 1 1 0 0 0 0
0 0 1 1 0 0 0 0 1 1 0 0 0 0 1 1 0 0 1 0 0 0 0 0 0 0 0 0 0 0 0 0 0 0 0 0 0 0 0 0 0 0 0 0 0 0 0 0 0 0 1 1 0 0 0 0 1 1 0 0 0 0 1 1 0 0
0 0 0 1 0 1 1 0 0 1 0 1 1 0 0 1 0 1 0 0 0 0 0 0 0 0 0 0 0 0 0 0 0 0 0 0 0 0 0 0 0 0 0 0 0 1 1 0 1 0 0 1 0 1 1 0 0 1 0 1 1 0 0 1 0 0
0 0 1 0 0 1 1 0 1 0 0 1 1 0 1 0 0 1 1 0 0 0 0 0 0 0 0 0 0 0 0 0 0 0 0 0 0 0 0 0 0 0 0 0 0 1 0 1 1 0 1 0 0 1 1 0 1 0 0 1 1 0 1 0 0 0
0 0 1 1 0 0 0 0 1 1 0 0 0 0 1 1 0 0 0 0 0 0 0 0 0 0 0 0 0 0 0 0 0 0 0 0 0 0 0 0 0 0 0 0 0 0 0 0 0 0 1 1 0 0 0 0 1 1 0 0 0 0 1 1 0 0
0 0 0 0 1 1 0 1 0 0 1 1 0 1 0 0 1 1 0 1 0 0 1 1 0 1 0 0 1 1 0 1 0 0 1 1 0 1 0 0 1 1 0 1 0 0 1 1 0 1 0 0 1 1 0 1 0 0 1 1 0 1 0 0 0 0
0 0 0 0 1 0 1 1 0 0 1 0 1 1 0 0 1 0 1 1 0 0 1 0 1 1 0 0 1 0 1 1 0 0 1 0 1 1 0 0 1 0 1 1 0 0 1 0 1 1 0 0 1 0 1 1 0 0 1 0 1 1 0 0 0 0
0 0 1 1 0 0 0 0 1 1 0 0 0 0 1 1 0 0 0 0 1 1 0 0 0 0 1 1 0 0 0 0 1 1 0 0 0 0 1 1 0 0 0 0 1 1 0 0 0 0 1 1 0 0 0 0 1 1 0 0 0 0 1 1 0 0
0 0 0 1 0 1 1 0 0 1 0 1 1 0 0 1 0 1 1 0 0 1 0 1 1 0 0 1 0 1 1 0 0 1 0 1 1 0 0 1 0 1 1 0 0 1 0 1 1 0 0 1 0 1 1 0 0 1 0 1 1 0 0 1 0 0
0 0 1 0 0 1 1 0 1 0 0 1 1 0 1 0 0 1 1 0 1 0 0 1 1 0 1 0 0 1 1 0 1 0 0 1 1 0 1 0 0 1 1 0 1 0 0 1 1 0 1 0 0 1 1 0 1 0 0 1 1 0 1 0 0 0
0 0 1 1 0 0 0 0 1 1 0 0 0 0 1 1 0 0 0 0 1 1 0 0 0 0 1 1 0 0 0 0 1 1 0 0 0 0 1 1 0 0 0 0 1 1 0 0 0 0 1 1 0 0 0 0 1 1 0 0 0 0 1 1 0 0
0 0 0 0 1 1 0 1 0 0 1 1 0 1 0 0 1 1 0 1 0 0 1 1 0 1 0 0 1 1 0 1 0 0 1 1 0 1 0 0 1 1 0 1 0 0 1 1 0 1 0 0 1 1 0 1 0 0 1 1 0 1 0 0 0 0
0 0 0 0 1 0 1 1 0 0 1 0 1 1 0 0 1 0 1 1 0 0 1 0 1 1 0 0 1 0 1 1 0 0 1 0 1 1 0 0 1 0 1 1 0 0 1 0 1 1 0 0 1 0 1 1 0 0 1 0 1 1 0 0 0 0
0 0 1 1 0 0 0 0 1 1 0 0 0 0 1 1 0 0 0 0 1 1 0 0 0 0 1 1 0 0 0 0 1 1 0 0 0 0 1 1 0 0 0 0 1 1 0 0 0 0 1 1 0 0 0 0 1 1 0 0 0 0 1 1 0 0
0 0 0 1 0 1 1 0 0 1 0 1 1 0 0 1 0 1 1 0 0 1 0 1 1 0 0 1 0 1 1 0 0 1 0 1 1 0 0 1 0 1 1 0 0 1 0 1 1 0 0 1 0 1 1 0 0 1 0 1 1 0 0 1 0 0
0 0 1 0 0 1 1 0 1 0 0 1 1 0 1 0 0 1 1 0 1 0 0 1 1 0 1 0 0 1 1 0 1 0 0 1 1 0 1 0 0 1 1 0 1 0 0 1 1 0 1 0 0 1 1 0 1 0 0 1 1 0 1 0 0 0
0 0 1 1 0 0 0 0 1 1 0 0 0 0 1 1 0 0 0 0 1 1 0 0 0 0 1 1 0 0 0 0 1 1 0 0 0 0 1 1 0 0 0 0 1 1 0 0 0 0 1 1 0 0 0 0 1 1 0 0 0 0 1 1 0 0
0 0 0 0 1 1 0 1 0 0 1 1 0 1 0 0 1 1 0 1 0 0 1 1 0 1 0 0 1 1 0 1 0 0 1 1 0 1 0 0 1 1 0 1 0 0 1 1 0 1 0 0 1 1 0 1 0 0 1 1 0 1 0 0 0 0
0 0 0 0 1 0 1 1 0 0 1 0 1 1 0 0 1 0 1 1 0 0 1 0 1 1 0 0 1 0 1 1 0 0 1 0 1 1 0 0 1 0 1 1 0 0 1 0 1 1 0 0 1 0 1 1 0 0 1 0 1 1 0 0 0 0
0 0 0 0 0 0 0 0 0 0 0 0 0 0 0 0 0 0 0 0 0 0 0 0 0 0 0 0 0 0 0 0 0 0 0 0 0 0 0 0 0 0 0 0 0 0 0 0 0 0 0 0 0 0 0 0 0 0 0 0 0 0 0 0 0 0
0 0 0 0 0 0 0 0 0 0 0 0 0 0 0 0 0 0 0 0 0 0 0 0 0 0 0 0 0 0 0 0 0 0 0 0 0 0 0 0 0 0 0 0 0 0 0 0 0 0 0 0 0 0 0 0 0 0 0 0 0 0 0 0 0 0
\end{filecontents}

\begin{filecontents}{quadrisnarks.cvs}
0 0 0 0 0 0 0 0 0 0 0 0 0 0 0 0 0 0 0 0 0 0 0 0 0 0 0 0 0 0 0 0 0 0 0 0 0 0 0 0 0 0 0 0 0 0 0 0 0 0 0 0 0 0 0 0 0 0 1 0 0 1 1 0 0 0 0 0 0 0 0 0 0 0 0 0 0 0 0 0 0 0 0 0 0 0 0 0 0 0 1 0 0 1 1 0 0 0 0 0 0 0 0 0 0 0 0 0 0 0 0 0 0 0 0 0 0 0 0 0 0 0 0 0 0 0 0 0 0 0 0 0 0 0 0 0 0 0
0 0 0 0 0 0 0 0 0 0 0 0 0 0 0 0 0 0 0 0 0 0 0 0 0 0 0 0 0 0 0 0 0 0 0 0 0 0 0 0 0 0 0 0 0 0 0 0 0 0 0 0 0 0 0 0 0 0 1 1 1 1 0 1 0 0 0 0 0 0 0 0 0 0 0 0 0 0 0 0 0 0 0 0 0 0 0 0 0 0 1 1 1 1 0 1 0 0 0 0 0 0 0 0 0 0 0 0 0 0 0 0 0 0 0 0 0 0 0 0 0 0 0 0 0 0 0 0 0 0 0 0 0 0 0 0 0 0
0 0 0 0 0 0 0 0 0 0 0 0 0 0 0 0 0 0 0 0 0 0 0 0 0 0 0 0 0 0 0 0 0 0 0 0 0 0 0 0 0 0 0 0 0 0 0 0 0 0 0 0 0 0 0 0 0 0 0 0 0 0 0 1 0 0 0 0 0 0 0 0 0 0 0 0 0 0 0 0 0 0 0 0 0 0 0 0 0 0 0 0 0 0 0 1 0 0 0 0 0 0 0 0 0 0 0 0 0 0 0 0 0 0 0 0 0 0 0 0 0 0 0 0 0 0 0 0 0 0 0 0 0 0 0 0 0 0
0 0 0 0 0 0 0 0 0 0 0 0 0 0 0 0 0 0 0 0 0 0 0 0 0 0 0 0 0 0 0 0 0 0 0 0 0 0 0 0 0 0 0 0 0 0 0 0 0 0 0 0 0 0 0 0 1 1 1 1 1 1 1 0 1 1 0 0 0 0 0 0 0 0 0 0 0 0 0 0 0 0 0 0 0 0 0 0 1 1 1 1 1 1 1 0 1 1 0 0 0 0 0 0 0 0 0 0 0 0 0 0 0 0 0 0 0 0 0 0 0 0 0 0 0 0 0 0 0 0 0 0 0 0 0 0 0 0
0 0 0 0 0 0 0 0 0 0 0 0 0 0 0 0 0 0 0 0 0 0 0 0 0 0 0 0 0 0 0 0 0 0 0 0 0 0 0 0 0 0 0 1 1 0 0 0 0 0 0 0 0 0 0 1 0 0 0 0 0 0 1 0 1 0 0 1 0 0 0 0 0 0 0 0 0 0 0 0 0 0 0 0 0 0 0 1 0 0 0 0 0 0 1 0 1 0 0 1 0 0 0 0 0 0 0 0 0 0 0 0 0 0 0 0 0 0 0 0 0 0 0 0 0 0 0 0 0 0 0 0 0 0 0 0 0 0
0 0 0 0 0 0 0 0 0 0 0 0 0 0 0 0 0 0 0 0 0 0 0 0 0 0 0 0 1 1 0 0 0 0 0 0 0 0 0 0 0 0 0 1 0 0 0 0 0 0 0 0 0 0 0 1 1 0 0 1 1 0 1 0 0 0 1 1 0 0 0 0 0 0 0 0 0 0 0 0 0 0 0 0 0 0 0 1 1 0 0 1 1 0 1 0 0 0 1 1 0 0 0 0 0 0 0 0 0 0 0 0 0 0 0 0 0 0 0 0 0 0 0 0 0 0 0 0 0 0 0 0 0 0 0 0 0 0
0 0 0 0 0 0 0 0 0 0 0 0 0 0 0 0 0 0 0 0 0 0 0 0 0 0 0 0 1 1 0 0 0 0 0 0 1 0 0 0 0 1 0 1 0 0 0 0 0 0 1 1 0 0 0 0 0 0 0 1 1 0 1 1 0 0 0 0 0 0 0 0 0 0 0 0 0 0 0 0 0 0 1 1 0 0 0 0 0 0 0 1 1 0 1 1 0 0 0 0 0 0 0 0 0 0 0 0 0 0 0 0 0 0 0 0 0 0 0 0 0 0 0 0 0 0 0 0 0 0 0 0 0 0 0 0 0 0
0 0 0 0 0 0 0 0 0 0 0 0 0 0 0 0 0 0 0 0 0 0 0 0 0 0 0 0 0 0 0 0 0 0 0 0 1 0 0 0 0 1 1 0 0 0 0 0 0 0 1 0 0 0 0 0 0 0 0 0 0 0 0 0 0 0 0 0 0 0 0 0 0 0 0 0 0 0 0 0 0 0 1 0 0 0 0 0 0 0 0 0 0 0 0 0 0 0 0 0 0 0 0 0 0 0 0 0 0 0 0 0 0 0 0 0 0 0 0 0 0 0 0 0 0 0 0 0 0 0 0 0 0 0 0 0 0 0
0 0 0 0 0 0 0 0 0 0 0 0 0 0 0 0 0 0 0 0 0 0 0 0 0 0 0 0 0 0 0 0 0 0 0 1 0 0 0 0 0 0 0 0 0 0 0 0 0 0 0 0 1 0 0 0 0 0 0 0 0 0 0 0 0 0 0 0 0 0 0 0 0 0 0 0 0 0 0 0 0 0 0 0 1 0 0 0 0 0 0 0 0 0 0 0 0 0 0 0 0 0 0 0 0 0 0 0 0 0 0 0 0 0 0 0 0 0 0 0 0 0 0 0 0 0 0 0 0 0 0 0 0 0 0 0 0 0
0 0 0 0 0 0 0 0 0 0 0 0 0 0 0 0 0 0 0 0 0 0 0 0 0 0 0 0 0 0 0 0 0 0 0 0 1 1 0 0 0 0 0 0 0 0 0 0 1 1 1 1 1 0 0 0 0 0 0 0 0 0 0 0 0 0 0 0 0 0 0 0 0 0 0 0 0 0 0 0 1 1 1 1 1 0 0 0 0 0 0 0 0 0 0 0 0 0 0 0 0 0 0 0 0 0 0 0 0 0 0 0 0 0 0 0 0 0 0 0 0 0 0 0 0 0 0 0 0 0 0 0 0 0 0 0 0 0
0 0 0 0 0 0 0 0 0 0 0 0 0 0 0 0 1 0 1 0 0 0 0 1 1 0 0 0 0 0 0 0 0 0 0 0 1 1 0 0 0 0 0 0 0 0 0 0 1 0 0 0 0 0 0 0 0 0 0 0 0 0 1 0 0 0 0 0 0 0 0 0 0 0 0 0 0 0 0 0 1 0 0 0 0 0 0 0 0 0 0 0 0 0 1 0 0 0 0 0 0 0 0 0 0 0 0 0 0 0 0 0 0 0 0 0 0 0 0 0 0 0 0 0 0 0 0 0 0 0 0 0 0 0 0 0 0 0
0 0 0 0 0 0 0 0 0 0 0 0 0 0 0 1 0 1 1 0 0 0 1 1 1 0 0 0 0 0 0 0 0 0 0 0 0 0 0 0 0 0 0 0 0 0 0 0 0 0 0 1 1 1 1 0 0 0 0 0 0 1 0 1 0 0 0 0 0 0 0 0 0 0 0 0 0 0 0 0 0 0 0 1 1 1 1 0 0 0 0 0 0 1 0 1 0 0 0 0 0 0 0 0 0 0 0 0 0 0 0 0 0 0 0 0 0 0 0 0 0 0 0 0 0 0 0 0 0 0 0 0 0 0 0 0 0 0
0 0 0 0 0 0 0 0 0 0 0 0 0 0 0 0 1 0 1 1 1 0 0 0 0 0 0 0 0 0 0 1 1 0 0 0 0 0 0 0 0 0 0 0 0 0 0 0 0 0 0 1 0 0 1 0 0 0 0 0 0 1 0 1 0 0 0 0 0 0 0 0 0 0 0 0 0 0 0 0 0 0 0 1 0 0 1 0 0 0 0 0 0 1 0 1 0 0 0 0 0 0 0 0 0 0 0 0 0 0 0 0 0 0 0 0 0 0 0 0 0 0 0 0 0 0 0 0 0 0 0 0 0 0 0 0 0 0
0 0 0 0 0 0 0 0 0 0 0 0 0 0 0 0 0 0 0 1 1 0 0 0 1 0 0 0 0 0 1 0 0 1 0 0 0 0 0 0 0 0 0 0 0 0 0 0 0 0 0 0 0 0 0 0 0 0 0 0 0 0 1 0 0 0 0 0 0 0 0 0 0 0 0 0 0 0 0 0 0 0 0 0 0 0 0 0 0 0 0 0 0 0 1 0 0 0 0 0 0 0 0 0 0 0 0 0 0 0 0 0 0 0 0 0 0 0 0 0 0 0 0 0 0 0 0 0 0 0 0 0 0 0 0 0 0 0
0 0 0 0 0 0 0 0 0 0 0 0 0 0 0 0 0 0 0 0 0 0 0 0 0 0 0 0 0 0 0 1 0 0 1 1 0 0 0 0 0 0 0 0 0 0 1 1 0 0 0 0 0 0 0 0 0 0 0 0 0 0 0 0 0 0 0 0 0 0 0 0 0 0 0 0 0 0 1 1 0 0 0 0 0 0 0 0 0 0 0 0 0 0 0 0 0 0 0 0 0 0 0 0 0 0 0 0 0 0 0 0 0 0 0 0 0 0 0 0 0 0 0 0 0 0 0 0 0 0 0 0 0 0 0 0 0 0
0 0 0 0 0 0 0 0 0 0 0 0 0 0 0 0 0 0 1 0 0 0 1 1 0 0 0 0 0 0 0 1 0 0 0 0 0 0 0 0 0 1 1 0 0 0 1 0 0 1 0 1 1 0 0 0 0 0 0 0 0 0 0 0 0 0 0 0 0 0 0 0 0 0 0 0 0 0 1 0 0 1 0 1 1 0 0 0 0 0 0 0 0 0 0 0 0 0 0 0 0 0 0 0 0 0 0 0 0 0 0 0 0 0 0 0 0 0 0 0 0 0 0 0 0 0 0 0 0 0 0 0 0 0 0 0 0 0
0 0 0 0 0 0 0 0 0 0 0 0 0 0 0 0 0 0 1 0 1 1 0 0 0 0 0 0 0 0 0 0 1 0 0 0 0 0 0 0 0 1 0 1 0 0 0 0 1 1 0 1 1 0 0 0 1 0 0 0 0 0 0 0 0 0 0 0 0 0 0 0 0 0 0 0 0 0 0 0 1 1 0 1 1 0 0 0 1 0 0 0 0 0 0 0 0 0 0 0 0 0 0 0 0 0 0 0 0 0 0 0 0 0 0 0 0 0 0 0 0 0 0 0 0 0 0 0 0 0 0 0 0 0 0 0 0 0
0 0 0 0 0 0 0 0 0 0 0 0 0 0 0 0 0 0 1 0 0 0 0 0 0 1 1 0 0 0 0 0 0 0 0 0 1 0 0 0 0 0 0 1 0 0 0 0 0 0 0 0 0 0 0 1 0 1 0 0 0 0 0 0 0 0 0 0 0 0 0 0 0 0 0 0 0 0 0 0 0 0 0 0 0 0 0 1 0 1 0 0 0 0 0 0 0 0 0 0 0 0 0 0 0 0 0 0 0 0 0 0 0 0 0 0 0 0 0 0 0 0 0 0 0 0 0 0 0 0 0 0 0 0 0 0 0 0
0 0 0 0 0 0 0 0 0 0 0 0 0 0 0 0 0 0 0 0 0 0 0 0 0 1 1 0 0 0 0 0 0 0 1 0 1 0 0 0 0 0 0 1 1 0 0 0 0 0 0 0 0 0 0 0 1 0 0 0 0 0 0 0 0 0 0 0 0 0 0 0 0 0 0 0 0 0 0 0 0 0 0 0 0 0 0 0 1 0 0 0 0 0 0 0 0 0 0 0 0 0 0 0 0 0 0 0 0 0 0 0 0 0 0 0 0 0 0 0 0 0 0 0 0 0 0 0 0 0 0 0 0 0 0 0 0 0
0 0 0 0 0 0 0 0 0 0 0 0 0 0 0 0 0 0 0 0 0 0 0 0 0 0 0 0 0 0 0 0 0 0 0 1 1 0 0 0 0 0 0 0 0 0 0 0 0 1 0 0 0 0 0 0 0 0 0 0 0 0 0 0 0 0 0 0 0 0 0 0 0 0 0 0 0 0 0 0 0 1 0 0 0 0 0 0 0 0 0 0 0 0 0 0 0 0 0 0 0 0 0 0 0 0 0 0 0 0 0 0 0 0 0 0 0 0 0 0 0 0 0 0 0 0 0 0 0 0 0 0 0 0 0 0 0 0
0 0 0 0 0 0 0 0 0 0 0 0 0 0 0 0 0 0 0 0 0 0 0 0 0 0 0 0 0 0 0 0 0 0 0 0 0 0 0 0 0 0 0 0 0 0 0 1 1 1 0 0 0 0 0 0 0 0 0 0 0 0 0 0 0 0 0 0 0 0 0 0 0 0 0 0 0 0 0 1 1 1 0 0 0 0 0 0 0 0 0 0 0 0 0 0 0 0 0 0 0 0 0 0 0 0 0 0 0 0 0 0 0 0 0 0 0 0 0 0 0 0 0 0 0 0 0 0 0 0 0 0 0 0 0 0 0 0
0 0 0 0 0 0 0 0 0 0 0 0 0 0 0 0 0 0 0 0 0 0 0 0 0 0 0 0 0 0 0 0 0 0 0 0 0 0 0 0 0 0 0 0 0 0 1 0 0 0 0 0 0 0 0 0 0 0 0 0 0 0 1 1 0 0 0 0 0 0 0 0 0 0 0 0 0 0 1 0 0 0 0 0 0 0 0 0 0 0 0 0 0 0 1 1 0 0 0 0 0 0 0 0 0 0 0 0 0 0 0 0 0 0 0 0 0 0 0 0 0 0 0 0 0 0 0 0 0 0 0 0 0 0 0 0 0 0
0 0 0 0 0 0 0 0 0 0 0 0 0 0 0 0 0 0 0 0 0 0 0 0 0 0 0 0 0 0 0 0 0 0 0 0 0 0 0 0 0 0 0 0 0 0 1 1 0 0 0 0 0 0 0 0 0 0 0 0 0 0 1 0 0 0 0 0 0 0 0 0 0 0 0 0 0 0 1 1 0 0 0 0 0 0 0 0 0 0 0 0 0 0 1 0 0 0 0 0 0 0 0 0 0 0 0 0 0 0 0 0 0 0 0 0 0 0 0 0 0 0 0 0 0 0 0 0 0 0 0 0 0 0 0 0 0 0
0 0 0 0 0 0 0 0 0 0 0 0 0 0 0 0 0 0 0 0 0 0 0 0 0 0 0 0 0 0 0 0 0 0 0 0 0 0 0 0 0 0 0 0 0 0 0 0 0 0 0 0 0 0 0 0 0 0 0 0 0 0 0 1 1 1 0 0 0 0 0 0 0 0 0 0 0 0 0 0 0 0 0 0 0 0 0 0 0 0 0 0 0 0 0 1 1 1 0 0 0 0 0 0 0 0 0 0 0 0 0 0 0 0 0 0 0 0 0 0 0 0 0 0 0 0 0 0 0 0 0 0 0 0 0 0 0 0
0 0 0 0 0 0 0 0 0 0 0 0 0 0 0 0 0 0 0 0 0 0 0 0 0 0 0 0 0 0 0 0 0 0 0 0 0 0 0 0 0 0 0 0 0 0 0 0 0 0 0 0 0 0 0 0 0 0 0 0 0 0 0 0 0 1 0 0 0 0 0 0 0 0 0 0 0 0 0 0 0 0 0 0 0 0 0 0 0 0 0 0 0 0 0 0 0 1 0 0 0 0 0 0 0 0 0 0 0 0 0 0 0 0 0 0 0 0 0 0 0 0 0 0 0 0 0 0 0 0 0 0 0 0 0 0 0 0
0 0 0 0 0 0 0 0 0 0 0 0 0 0 0 0 0 0 0 0 0 0 0 0 0 0 0 0 0 0 0 0 0 0 0 0 0 0 0 0 1 0 0 0 0 0 0 0 0 0 0 0 0 0 0 0 0 0 0 0 0 0 0 0 0 0 0 0 0 0 0 0 1 0 0 0 0 0 0 0 0 0 0 0 0 0 0 0 0 0 0 0 0 0 0 0 0 0 0 0 0 0 0 0 0 0 0 0 0 0 0 0 0 0 0 0 0 0 0 0 0 0 1 1 0 0 0 0 0 0 0 0 0 0 0 0 0 0
0 0 0 0 0 0 0 0 0 0 0 0 0 0 0 0 0 0 0 0 0 0 0 0 0 0 0 0 0 0 0 0 0 0 0 0 0 0 0 1 0 1 0 0 0 0 0 0 0 0 0 0 0 0 0 0 0 0 0 0 0 0 0 0 0 0 0 0 0 0 0 1 0 1 0 0 0 0 0 0 0 0 0 0 0 0 0 0 0 0 0 0 0 0 0 0 0 0 0 0 0 0 0 0 0 0 0 0 0 0 0 0 0 0 0 0 0 0 0 0 0 0 0 1 0 0 0 0 0 0 0 0 0 0 0 0 0 0
0 0 0 0 0 0 0 0 0 0 0 0 0 0 0 0 0 0 0 0 0 0 0 0 0 0 0 0 0 0 0 0 1 1 0 1 1 0 0 0 1 0 0 0 0 0 0 0 0 0 0 0 0 0 0 0 0 0 0 0 0 0 0 0 1 1 0 1 1 0 0 0 1 0 0 0 0 0 0 0 0 0 0 0 0 0 0 0 0 0 0 0 0 0 0 0 0 0 0 0 0 0 0 0 0 0 0 0 0 0 0 0 0 0 0 0 0 0 0 0 0 1 0 0 0 0 0 1 1 0 0 0 0 0 0 0 0 0
0 0 0 0 0 0 0 0 0 0 0 0 0 0 0 0 0 0 0 0 0 0 0 0 0 0 0 0 0 0 1 0 0 1 0 1 1 0 0 0 0 0 0 0 0 0 0 0 0 0 0 0 0 0 0 0 0 0 0 0 0 0 1 0 0 1 0 1 1 0 0 0 0 0 0 0 0 0 0 0 0 0 0 0 0 0 0 0 0 0 0 0 0 0 0 0 0 0 0 0 0 0 0 0 0 0 0 0 0 0 0 0 0 0 0 0 0 0 0 0 0 1 1 0 0 0 0 0 1 0 0 0 0 0 0 0 0 0
0 0 0 0 0 0 0 0 0 0 0 0 0 0 0 0 0 0 0 0 0 0 0 0 0 0 0 0 0 0 1 1 0 0 0 0 0 0 0 0 0 0 0 0 0 0 0 0 0 0 0 0 0 0 0 0 0 0 0 0 0 0 1 1 0 0 0 0 0 0 0 0 0 0 0 0 0 0 0 0 0 0 0 0 0 0 0 0 0 0 0 0 0 0 0 0 0 0 0 0 0 0 0 0 0 0 0 0 0 0 0 0 0 0 0 0 0 0 0 0 0 0 0 0 0 0 0 0 1 0 1 1 0 0 0 0 0 0
0 0 0 0 0 0 0 0 0 0 0 0 0 0 0 0 0 0 0 0 0 0 0 0 0 0 0 0 0 0 0 0 0 0 0 0 0 0 0 0 0 0 0 0 0 0 1 0 0 0 0 0 0 0 0 0 0 0 0 0 0 0 0 0 0 0 0 0 0 0 0 0 0 0 0 0 0 0 1 0 0 0 0 0 0 0 0 0 0 0 0 0 0 0 0 0 0 0 0 0 0 0 0 0 0 0 0 0 0 0 0 0 0 0 0 0 0 0 0 0 0 1 1 0 0 1 1 0 1 0 0 1 0 0 0 0 0 0
0 0 0 0 0 0 0 0 0 0 0 0 0 0 0 0 0 0 0 0 0 0 0 0 0 0 0 0 0 0 0 0 0 0 0 1 0 0 1 0 0 0 0 0 0 1 0 1 0 0 0 0 0 0 0 0 0 0 0 0 0 0 0 0 0 0 0 1 0 0 1 0 0 0 0 0 0 1 0 1 0 0 0 0 0 0 0 0 0 0 0 0 0 0 0 0 0 0 0 0 0 0 0 0 0 0 0 0 0 0 0 0 0 0 0 0 0 0 0 0 0 1 1 0 0 0 1 0 1 1 0 0 0 0 0 0 0 0
0 0 0 0 0 0 0 0 0 0 0 0 0 0 0 0 0 0 0 0 0 0 0 0 0 0 0 0 0 0 0 0 0 0 0 1 1 1 1 0 0 0 0 0 0 1 0 1 0 0 0 0 0 0 0 0 0 0 0 0 0 0 0 0 0 0 0 1 1 1 1 0 0 0 0 0 0 1 0 1 0 0 0 0 0 0 0 0 0 0 0 0 0 0 0 0 0 0 0 0 0 0 0 0 0 0 0 0 0 0 0 0 0 0 0 0 0 0 0 0 0 0 0 0 0 0 1 0 0 0 0 0 0 0 0 0 0 0
0 0 0 0 0 0 0 0 0 0 0 0 0 0 0 0 0 0 0 0 0 0 0 0 0 0 0 0 0 0 0 0 1 0 0 0 0 0 0 0 0 0 0 0 0 0 1 0 0 0 0 0 0 0 0 0 0 0 0 0 0 0 0 0 1 0 0 0 0 0 0 0 0 0 0 0 0 0 1 0 0 0 0 0 0 0 0 0 0 0 0 0 0 0 0 0 0 0 0 0 0 0 0 0 0 0 0 0 0 0 0 0 0 0 0 0 0 0 0 0 0 0 0 0 0 1 1 0 0 0 0 0 0 0 0 0 0 0
0 0 0 0 0 0 0 0 0 0 0 0 0 0 0 0 0 0 0 0 0 0 0 0 0 0 0 0 0 0 0 0 1 1 1 1 1 0 0 0 0 0 0 0 0 0 0 0 0 0 0 0 0 0 0 0 0 0 0 0 0 0 0 0 1 1 1 1 1 0 0 0 0 0 0 0 0 0 0 0 0 0 0 0 0 0 0 0 0 0 0 0 0 0 0 0 0 0 0 0 0 0 0 0 0 0 0 0 0 0 0 0 0 0 0 0 0 0 0 0 1 0 0 0 0 0 0 0 0 0 0 0 1 1 0 0 0 0
0 0 0 0 0 0 0 0 0 0 0 0 0 0 0 0 0 0 0 0 0 0 0 0 0 0 0 0 0 0 0 0 0 0 0 0 1 0 0 0 0 0 0 0 0 0 0 0 0 0 0 0 0 0 0 0 0 0 0 0 0 0 0 0 0 0 0 0 1 0 0 0 0 0 0 0 0 0 0 0 0 0 0 0 0 0 0 0 0 0 0 0 0 0 0 0 0 0 0 0 0 0 0 0 0 0 0 0 0 0 0 0 0 0 0 0 0 0 0 1 0 1 0 0 0 0 0 0 0 0 0 0 1 0 1 0 0 0
0 0 0 0 0 0 0 0 0 0 0 0 0 0 0 0 0 0 0 0 0 0 0 0 0 0 0 0 0 0 0 0 0 0 1 0 0 0 0 0 0 0 0 0 0 0 0 0 0 0 0 0 0 0 0 0 0 0 0 0 0 0 0 0 0 0 1 0 0 0 0 0 0 0 0 0 0 0 0 0 0 0 0 0 0 0 0 0 0 0 0 0 0 0 0 0 0 0 0 0 0 0 0 0 0 0 0 0 0 0 0 0 0 0 0 0 0 0 0 0 1 0 0 0 0 0 0 0 0 0 0 0 0 0 1 0 0 0
0 0 0 0 0 0 0 0 0 0 0 0 0 0 0 0 0 0 0 0 0 0 0 0 0 0 0 0 0 0 0 0 0 0 1 1 0 0 0 0 0 0 0 1 1 0 1 1 0 0 0 0 0 0 0 0 0 0 0 0 0 0 0 0 0 0 1 1 0 0 0 0 0 0 0 1 1 0 1 1 0 0 0 0 0 0 0 0 0 0 0 0 0 0 0 0 0 0 0 0 0 0 0 0 0 0 0 0 0 0 0 0 0 0 0 0 0 0 0 0 0 0 0 0 0 0 0 0 0 0 0 0 0 0 1 0 1 1
0 0 0 0 0 0 0 0 0 0 0 0 0 0 0 0 0 0 0 0 0 0 0 0 0 0 0 0 0 0 0 0 0 0 0 0 0 0 0 1 1 0 0 1 1 0 1 0 0 0 1 1 0 0 0 0 0 0 0 0 0 0 0 0 0 0 0 0 0 0 0 1 1 0 0 1 1 0 1 0 0 0 1 1 0 0 0 0 0 0 0 0 0 0 0 0 0 0 0 0 0 0 0 0 0 0 0 0 0 0 0 0 0 0 0 0 0 0 0 0 0 0 0 0 0 0 0 0 0 0 0 1 1 0 1 0 1 0
0 0 0 0 0 0 0 0 0 0 0 0 0 0 0 0 0 0 0 0 0 0 0 0 0 0 0 0 0 0 0 0 0 0 0 0 0 0 0 1 0 0 0 0 0 0 1 0 1 0 0 1 0 0 0 0 0 0 0 0 0 0 0 0 0 0 0 0 0 0 0 1 0 0 0 0 0 0 1 0 1 0 0 1 0 0 0 0 0 0 0 0 0 0 0 0 0 0 0 0 0 0 0 0 0 0 0 0 0 0 0 0 0 0 0 0 0 0 0 0 0 0 0 0 0 0 0 0 0 0 0 1 1 0 1 0 1 0
0 0 0 0 0 0 0 0 0 0 0 0 0 0 0 0 0 0 0 0 0 0 0 0 0 0 0 0 0 0 0 0 0 0 0 0 0 0 0 0 1 1 1 1 1 1 1 0 1 1 0 0 0 0 0 0 0 0 0 0 0 0 0 0 0 0 0 0 0 0 0 0 1 1 1 1 1 1 1 0 1 1 0 0 0 0 0 0 0 0 0 0 0 0 0 0 0 0 0 0 0 0 0 0 0 0 0 0 0 0 0 0 0 0 0 0 0 0 0 0 0 0 0 0 0 1 1 0 0 0 0 0 0 0 1 0 1 1
0 0 0 0 0 0 0 0 0 0 0 0 0 0 0 0 0 0 0 0 0 0 0 0 0 0 0 0 0 0 0 0 0 0 0 0 0 0 0 0 0 0 0 0 0 0 0 1 0 0 0 0 0 0 0 0 0 0 0 0 0 0 0 0 0 0 0 0 0 0 0 0 0 0 0 0 0 0 0 1 0 0 0 0 0 0 0 0 0 0 0 0 0 0 0 0 0 0 0 0 0 0 0 0 0 0 0 0 0 0 0 0 0 0 0 1 1 0 0 0 0 0 0 0 1 0 0 1 0 0 0 1 1 1 1 0 0 1
0 0 0 0 0 0 0 0 0 0 0 0 0 0 0 0 0 0 0 0 0 0 0 0 0 0 0 0 0 0 0 0 0 0 0 0 0 0 0 0 0 0 1 1 1 1 0 1 0 0 0 0 0 0 0 0 0 0 0 0 0 0 0 0 0 0 0 0 0 0 0 0 0 0 1 1 1 1 0 1 0 0 0 0 0 0 0 0 0 0 0 0 0 0 0 0 0 0 0 0 0 0 0 0 0 0 0 0 0 0 0 0 0 0 1 0 1 0 0 0 0 0 0 0 0 1 1 0 0 0 0 1 0 0 0 1 1 0
0 0 0 0 0 0 0 0 0 0 0 0 0 0 0 0 0 0 0 0 0 0 0 0 0 0 0 0 0 0 0 0 0 0 0 0 0 0 0 0 0 0 1 0 0 1 1 0 0 0 0 0 0 0 0 0 0 0 0 0 0 0 0 0 0 0 0 0 0 0 0 0 0 0 1 0 0 1 1 0 0 0 0 0 0 0 0 0 0 0 0 0 0 0 0 0 0 0 0 0 0 0 0 0 0 0 0 0 0 0 0 0 0 0 1 0 0 0 0 0 0 0 0 0 0 0 0 0 0 0 0 0 0 1 1 0 0 0
0 0 0 0 0 0 0 0 0 0 0 0 0 0 0 0 0 0 0 0 0 0 0 0 0 0 0 0 0 0 0 0 0 0 0 0 0 0 0 0 0 0 0 0 0 0 0 0 0 0 0 0 0 0 0 0 0 0 0 0 0 0 0 0 0 0 0 0 0 0 0 0 0 0 0 0 0 0 0 0 0 0 0 0 0 0 0 0 0 0 0 0 0 0 0 0 0 0 0 0 0 0 0 0 0 0 0 0 0 0 0 0 0 1 1 0 0 0 0 0 0 0 0 0 0 0 0 0 0 0 0 0 0 0 1 0 0 0
0 0 0 0 0 0 0 0 0 0 0 0 0 0 0 0 0 0 0 0 0 0 0 0 0 0 0 0 0 0 0 0 0 0 0 0 0 0 0 0 0 0 0 0 0 0 0 0 0 0 0 0 0 0 0 0 0 0 0 0 0 0 0 0 0 0 0 0 0 0 0 0 0 0 0 0 0 0 0 0 0 0 0 0 0 0 0 0 0 0 0 0 0 0 0 0 0 0 0 0 0 0 0 0 0 0 0 0 0 0 0 0 0 0 0 0 0 0 0 0 0 0 0 0 0 0 0 0 0 0 0 0 1 0 0 0 0 0
0 0 0 0 0 0 0 0 0 0 0 0 0 0 0 0 0 0 0 0 0 0 0 0 0 0 0 0 0 0 0 0 0 0 0 0 0 0 0 0 0 0 0 1 1 0 0 1 0 0 0 0 0 0 0 0 0 0 0 0 0 0 0 0 0 0 0 0 0 0 0 0 0 0 0 1 1 0 0 1 0 0 0 0 0 0 0 0 0 0 0 0 0 0 0 0 0 0 0 0 0 0 0 0 0 0 0 0 0 0 0 0 0 0 0 0 0 0 0 0 0 0 0 0 0 0 0 0 0 0 0 0 1 1 0 0 0 0
0 0 0 0 0 0 0 0 0 0 0 0 0 0 0 0 0 0 0 0 0 0 0 0 0 0 0 0 0 0 0 0 0 0 0 0 0 0 0 0 0 0 1 0 1 1 1 1 0 0 0 0 0 0 0 0 0 0 0 0 0 0 0 0 0 0 0 0 0 0 0 0 0 0 1 0 1 1 1 1 0 0 0 0 0 0 0 0 0 0 0 0 0 0 0 0 0 0 0 0 0 0 0 0 0 0 0 0 0 0 0 0 0 0 0 0 0 0 0 0 0 0 0 0 0 0 0 0 0 0 0 0 0 0 0 0 0 0
0 0 0 0 0 0 0 0 0 0 0 0 0 0 0 0 0 0 0 0 0 0 0 0 0 0 0 0 0 0 0 0 0 0 0 0 0 0 0 0 0 0 1 0 0 0 0 0 0 0 0 0 0 0 0 0 0 0 0 0 0 0 0 0 0 0 0 0 0 0 0 0 0 0 1 0 0 0 0 0 0 0 0 0 0 0 0 0 0 0 0 0 0 0 0 0 0 0 0 0 0 0 0 0 0 0 0 0 0 0 0 0 0 0 0 0 0 0 0 0 0 0 0 0 0 0 0 0 0 0 0 0 0 0 0 0 0 0
0 0 0 0 0 0 0 0 0 0 0 0 0 0 0 0 0 0 0 0 0 0 0 0 0 0 0 0 0 0 0 0 0 0 0 0 0 0 0 0 1 1 0 1 1 1 1 1 1 1 0 0 0 0 0 0 0 0 0 0 0 0 0 0 0 0 0 0 0 0 0 0 1 1 0 1 1 1 1 1 1 1 0 0 0 0 0 0 0 0 0 0 0 0 0 0 0 0 0 0 0 0 0 0 0 0 0 0 0 0 0 0 0 0 0 0 0 0 0 0 0 0 0 0 0 0 0 0 0 0 0 0 0 0 0 0 0 0
0 0 0 0 0 0 0 0 0 0 0 0 0 0 0 0 0 0 0 0 0 0 0 0 0 0 0 0 0 0 0 0 0 0 0 0 0 0 1 0 0 1 0 1 0 0 0 0 0 0 1 0 0 0 0 0 0 0 0 0 0 0 0 0 0 0 0 0 0 0 1 0 0 1 0 1 0 0 0 0 0 0 1 0 0 0 0 0 0 0 0 0 0 0 0 0 0 0 0 0 0 0 0 0 0 0 0 0 0 0 0 0 0 0 0 0 0 0 0 0 0 0 0 0 0 0 0 0 0 0 0 0 0 0 0 0 0 0
0 0 0 0 0 0 0 0 0 0 0 0 0 0 0 0 0 0 0 0 0 0 0 0 0 0 0 0 0 0 0 0 0 0 0 0 0 0 1 1 0 0 0 1 0 1 1 0 0 1 1 0 0 0 0 0 0 0 0 0 0 0 0 0 0 0 0 0 0 0 1 1 0 0 0 1 0 1 1 0 0 1 1 0 0 0 0 0 0 0 0 0 0 0 0 0 0 0 0 0 0 0 0 0 0 0 0 0 0 0 0 0 0 0 0 0 0 0 0 0 0 0 0 0 0 0 0 0 0 0 0 0 0 0 0 0 0 0
0 0 0 0 0 0 0 0 0 0 0 0 0 0 0 0 0 0 0 0 0 0 0 0 0 0 0 0 0 0 0 0 0 0 0 0 0 0 0 0 0 0 1 1 0 1 1 0 0 0 0 0 0 0 1 1 0 0 0 0 0 0 0 0 0 0 0 0 0 0 0 0 0 0 1 1 0 1 1 0 0 0 0 0 0 0 1 1 0 0 0 0 0 0 0 0 0 0 0 0 0 0 0 0 0 0 0 0 0 0 0 0 0 0 0 0 0 0 0 0 0 0 0 0 0 0 0 0 0 0 0 0 0 0 0 0 0 0
0 0 0 0 0 0 0 0 0 0 0 0 0 0 0 0 0 0 0 0 0 0 0 0 0 0 0 0 0 0 0 0 0 0 0 0 0 0 0 0 0 0 0 0 0 0 0 0 0 0 0 0 0 0 0 1 0 0 0 0 0 0 0 0 0 0 0 0 0 0 0 0 0 0 0 0 0 0 0 0 0 0 0 0 0 0 0 1 0 0 0 0 0 0 0 0 0 0 0 0 0 0 0 0 0 0 0 0 0 0 0 0 0 0 0 0 0 0 0 0 0 0 0 0 0 0 0 0 0 0 0 0 0 0 0 0 0 0
0 0 0 0 0 0 0 0 0 0 0 0 0 0 0 0 0 0 0 0 0 0 0 0 0 0 0 0 0 0 0 0 0 0 0 0 0 0 0 0 0 0 0 0 0 0 0 0 0 0 0 0 0 1 0 0 0 0 0 0 0 0 0 0 0 0 0 0 0 0 0 0 0 0 0 0 0 0 0 0 0 0 0 0 0 1 0 0 0 0 0 0 0 0 0 0 0 0 0 0 0 0 0 0 0 0 0 0 0 0 0 0 0 0 0 0 0 0 0 0 0 0 0 0 0 0 0 0 0 0 0 0 0 0 0 0 0 0
0 0 0 0 0 0 0 0 0 0 0 0 0 0 0 0 0 0 0 0 0 0 0 0 0 0 0 0 0 0 0 0 0 0 0 0 0 0 0 0 0 0 0 0 0 0 0 0 0 0 0 0 0 1 1 1 1 1 0 0 0 0 0 0 0 0 0 0 0 0 0 0 0 0 0 0 0 0 0 0 0 0 0 0 0 1 1 1 1 1 0 0 0 0 0 0 0 0 0 0 0 0 0 0 0 0 0 0 0 0 0 0 0 0 0 0 0 0 0 0 0 0 0 0 0 0 0 0 0 0 0 0 0 0 0 0 0 0
0 0 0 0 0 0 0 0 0 0 0 0 0 0 0 0 0 0 0 0 0 0 0 0 0 0 0 0 0 0 0 0 0 0 0 0 0 0 0 0 0 0 0 1 0 0 0 0 0 0 0 0 0 0 0 0 0 1 0 0 0 0 0 0 0 0 0 0 0 0 0 0 0 0 0 1 0 0 0 0 0 0 0 0 0 0 0 0 0 1 0 0 0 0 0 0 0 0 0 0 0 0 0 0 0 0 0 0 0 0 0 0 0 0 0 0 0 0 0 0 0 0 0 0 0 0 0 0 0 0 0 0 0 0 0 0 0 0
0 0 0 0 0 0 0 0 0 0 0 0 0 0 0 0 0 0 0 0 0 0 0 0 0 0 0 0 0 0 0 0 0 0 0 0 0 0 0 0 0 0 1 0 1 0 0 0 0 0 0 1 1 1 1 0 0 0 0 0 0 0 0 0 0 0 0 0 0 0 0 0 0 0 1 0 1 0 0 0 0 0 0 1 1 1 1 0 0 0 0 0 0 0 0 0 0 0 0 0 0 0 0 0 0 0 0 0 0 0 0 0 0 0 0 0 0 0 0 0 0 0 0 0 0 0 0 0 0 0 0 0 0 0 0 0 0 0
0 0 0 0 0 0 0 0 0 0 0 0 0 0 0 0 0 0 0 0 0 0 0 0 0 0 0 0 0 0 0 0 0 0 0 0 0 0 0 0 0 0 1 0 1 0 0 0 0 0 0 1 0 0 1 0 0 0 0 0 0 0 0 0 0 0 0 0 0 0 0 0 0 0 1 0 1 0 0 0 0 0 0 1 0 0 1 0 0 0 0 0 0 0 0 0 0 0 0 0 0 0 0 0 0 0 0 0 0 0 0 0 0 0 0 0 0 0 0 0 0 0 0 0 0 0 0 0 0 0 0 0 0 0 0 0 0 0
0 0 0 0 0 0 0 0 0 0 0 0 0 0 0 0 0 0 0 0 0 0 0 0 0 0 0 0 0 0 0 0 0 0 0 0 0 0 0 0 0 0 0 1 0 0 0 0 0 0 0 0 0 0 0 0 0 0 0 0 0 0 0 0 0 0 0 0 0 0 0 0 0 0 0 1 0 0 0 0 0 0 0 0 0 0 0 0 0 0 0 0 0 0 0 0 0 0 0 0 0 0 0 0 0 0 0 0 0 0 0 0 0 0 0 0 0 0 0 0 0 0 0 0 0 0 0 0 0 0 0 0 0 0 0 0 0 0
0 0 0 0 0 0 0 0 0 0 0 0 0 0 0 0 0 0 0 0 0 0 0 0 0 0 0 0 0 0 0 0 0 0 0 0 0 0 0 0 0 0 0 0 0 0 0 0 0 0 0 0 0 0 0 0 0 0 1 1 0 0 0 0 0 0 0 0 0 0 0 0 0 0 0 0 0 0 0 0 0 0 0 0 0 0 0 0 0 0 1 1 0 0 0 0 0 0 0 0 0 0 0 0 0 0 0 0 0 0 0 0 0 0 0 0 0 0 0 0 0 0 0 0 0 0 0 0 0 0 0 0 0 0 0 0 0 0
0 0 0 0 0 0 0 0 0 0 0 0 0 0 0 0 0 0 0 0 0 0 0 0 0 0 0 0 0 0 0 0 0 0 0 0 0 0 0 0 0 0 0 0 0 0 0 0 0 0 0 0 0 1 1 0 1 0 0 1 0 0 0 0 0 0 0 0 0 0 0 0 0 0 0 0 0 0 0 0 0 0 0 0 0 1 1 0 1 0 0 1 0 0 0 0 0 0 0 0 0 0 0 0 0 0 0 0 0 0 0 0 0 0 0 0 0 0 0 0 0 0 0 0 0 0 0 0 0 0 0 0 0 0 0 0 0 0
0 0 0 0 0 0 0 0 0 0 0 0 0 0 0 0 0 0 0 0 0 0 0 0 0 0 0 0 0 0 0 0 0 0 0 0 0 0 0 0 0 0 0 0 0 0 0 0 0 1 0 0 0 1 1 0 1 1 0 0 0 0 0 0 0 0 0 0 0 0 0 0 0 0 0 0 0 0 0 0 0 1 0 0 0 1 1 0 1 1 0 0 0 0 0 0 0 0 0 0 0 0 0 0 0 0 0 0 0 0 0 0 0 0 0 0 0 0 0 0 0 0 0 0 0 0 0 0 0 0 0 0 0 0 0 0 0 0
0 0 0 0 0 0 0 0 0 0 0 0 0 0 0 0 0 0 0 0 0 0 0 0 0 0 0 0 0 0 0 0 0 0 0 0 0 0 0 0 0 0 0 0 0 0 0 0 1 0 1 0 0 0 0 0 0 0 0 0 0 0 0 0 0 0 0 0 0 0 0 0 0 0 0 0 0 0 0 0 1 0 1 0 0 0 0 0 0 0 0 0 0 0 0 0 0 0 0 0 0 0 0 0 0 0 0 0 0 0 0 0 0 0 0 0 0 0 0 0 0 0 0 0 0 0 0 0 0 0 0 0 0 0 0 0 0 0
0 0 0 0 0 0 0 0 0 0 0 0 0 0 0 0 0 0 0 0 0 0 0 0 0 0 0 0 0 0 0 0 0 0 0 0 0 0 0 0 0 0 0 0 0 0 0 0 0 1 0 0 0 0 0 0 0 0 0 0 0 0 0 0 0 0 0 0 0 0 0 0 0 0 0 0 0 0 0 0 0 1 0 0 0 0 0 0 0 0 0 0 0 0 0 0 0 0 0 0 0 0 0 0 0 0 0 0 0 0 0 0 0 0 0 0 0 0 0 0 0 0 0 0 0 0 0 0 0 0 0 0 0 0 0 0 0 0
0 0 0 0 0 0 0 0 0 0 0 0 0 0 0 0 0 0 0 0 0 0 0 0 0 0 0 0 0 0 0 0 0 0 0 0 0 0 0 0 0 0 0 0 0 0 0 0 0 0 0 0 0 0 0 0 1 0 0 0 0 0 0 0 0 0 0 0 0 0 0 0 0 0 0 0 0 0 0 0 0 0 0 0 0 0 0 0 1 0 0 0 0 0 0 0 0 0 0 0 0 0 0 0 0 0 0 0 0 0 0 0 0 0 0 0 0 0 0 0 0 0 0 0 0 0 0 0 0 0 0 0 0 0 0 0 0 0
0 0 0 0 0 0 0 0 0 0 0 0 0 0 0 0 0 0 0 0 0 0 0 0 0 0 0 0 0 0 0 0 0 0 0 0 0 0 0 0 0 0 0 0 0 0 0 0 0 0 0 0 0 0 0 0 1 1 1 0 0 0 0 0 0 0 0 0 0 0 0 0 0 0 0 0 0 0 0 0 0 0 0 0 0 0 0 0 1 1 1 0 0 0 0 0 0 0 0 0 0 0 0 0 0 0 0 0 0 0 0 0 0 0 0 0 0 0 0 0 0 0 0 0 0 0 0 0 0 0 0 0 0 0 0 0 0 0
0 0 0 0 0 0 0 0 0 0 0 0 0 0 0 0 0 0 0 0 0 0 0 0 0 0 0 0 0 0 0 0 0 0 0 0 0 0 0 0 0 0 1 1 0 0 0 0 0 0 0 0 0 0 0 0 0 0 0 1 0 0 0 0 0 0 0 0 0 0 0 0 0 0 1 1 0 0 0 0 0 0 0 0 0 0 0 0 0 0 0 1 0 0 0 0 0 0 0 0 0 0 0 0 0 0 0 0 0 0 0 0 0 0 0 0 0 0 0 0 0 0 0 0 0 0 0 0 0 0 0 0 0 0 0 0 0 0
0 0 0 0 0 0 0 0 0 0 0 0 0 0 0 0 0 0 0 0 0 0 0 0 0 0 0 0 0 0 0 0 0 0 0 0 0 0 0 0 0 0 0 1 0 0 0 0 0 0 0 0 0 0 0 0 0 0 1 1 0 0 0 0 0 0 0 0 0 0 0 0 0 0 0 1 0 0 0 0 0 0 0 0 0 0 0 0 0 0 1 1 0 0 0 0 0 0 0 0 0 0 0 0 0 0 0 0 0 0 0 0 0 0 0 0 0 0 0 0 0 0 0 0 0 0 0 0 0 0 0 0 0 0 0 0 0 0
0 0 0 0 0 0 0 0 0 0 0 0 0 0 0 0 0 0 0 0 0 0 0 0 0 0 0 0 0 0 0 0 0 0 0 0 0 0 0 0 1 1 1 0 0 0 0 0 0 0 0 0 0 0 0 0 0 0 0 0 0 0 0 0 0 0 0 0 0 0 0 0 1 1 1 0 0 0 0 0 0 0 0 0 0 0 0 0 0 0 0 0 0 0 0 0 0 0 0 0 0 0 0 0 0 0 0 0 0 0 0 0 0 0 0 0 0 0 0 0 0 0 0 0 0 0 0 0 0 0 0 0 0 0 0 0 0 0
0 0 0 0 0 0 0 0 0 0 0 0 0 0 0 0 0 0 0 0 0 0 0 0 0 0 0 0 0 0 0 0 0 0 0 0 0 0 0 0 1 0 0 0 0 0 0 0 0 0 0 0 0 0 0 0 0 0 0 0 0 0 0 0 0 0 0 0 0 0 0 0 1 0 0 0 0 0 0 0 0 0 0 0 0 0 0 0 0 0 0 0 0 0 0 0 0 0 0 0 0 0 0 0 0 0 0 0 0 0 0 0 0 0 0 0 0 0 0 0 0 0 0 0 0 0 0 0 0 0 0 0 0 0 0 0 0 0
0 0 0 0 0 0 0 0 0 0 0 0 0 0 1 1 0 0 0 0 0 0 0 0 0 0 0 0 0 0 0 0 0 0 0 0 0 0 0 0 0 0 0 0 0 0 0 0 0 0 0 0 0 0 0 0 0 0 0 0 0 0 0 0 0 1 0 0 0 0 0 0 0 0 0 0 0 0 0 0 0 0 0 0 0 0 0 0 0 0 0 0 0 0 0 0 0 1 0 0 0 0 0 0 0 0 0 0 0 0 0 0 0 0 0 0 0 0 0 0 0 0 0 0 0 0 0 0 0 0 0 0 0 0 0 0 0 0
0 0 0 0 0 0 0 0 0 0 0 0 0 0 1 0 0 0 0 0 0 0 0 0 0 0 0 0 0 0 0 0 0 0 0 0 0 0 0 0 0 0 0 0 0 0 0 0 0 0 0 0 0 0 0 0 0 0 0 0 0 0 0 0 1 0 1 0 0 0 0 0 0 0 0 0 0 0 0 0 0 0 0 0 0 0 0 0 0 0 0 0 0 0 0 0 1 0 1 0 0 0 0 0 0 0 0 0 0 0 0 0 0 0 0 0 0 0 0 0 0 0 0 0 0 0 0 0 0 0 0 0 0 0 0 0 0 0
0 0 0 0 0 0 0 0 0 1 1 0 0 0 0 0 1 0 0 0 0 0 0 0 0 0 0 0 0 0 0 0 0 0 0 0 0 0 0 0 0 0 0 0 0 0 0 0 0 0 0 0 0 0 0 0 0 0 0 0 0 0 0 0 0 1 0 0 0 1 1 0 1 1 0 0 0 0 0 0 0 0 0 0 0 0 0 0 0 0 0 0 0 0 0 0 0 1 0 0 0 1 1 0 1 1 0 0 0 0 0 0 0 0 0 0 0 0 0 0 0 0 0 0 0 0 0 0 0 0 0 0 0 0 0 0 0 0
0 0 0 0 0 0 0 0 0 1 0 0 0 0 0 1 1 0 0 0 0 0 0 0 0 0 0 0 0 0 0 0 0 0 0 0 0 0 0 0 0 0 0 0 0 0 0 0 0 0 0 0 0 0 0 0 0 0 0 0 0 0 0 0 0 0 0 0 0 1 1 0 1 0 0 1 0 0 0 0 0 0 0 0 0 0 0 0 0 0 0 0 0 0 0 0 0 0 0 0 0 1 1 0 1 0 0 1 0 0 0 0 0 0 0 0 0 0 0 0 0 0 0 0 0 0 0 0 0 0 0 0 0 0 0 0 0 0
0 0 0 0 0 0 1 1 0 1 0 0 0 0 0 0 0 0 0 0 0 0 0 0 0 0 0 0 0 0 0 0 0 0 0 0 0 0 0 0 0 0 0 0 0 0 0 0 0 0 0 0 0 0 0 0 0 0 0 0 0 0 0 0 0 0 0 0 0 0 0 0 0 0 1 1 0 0 0 0 0 0 0 0 0 0 0 0 0 0 0 0 0 0 0 0 0 0 0 0 0 0 0 0 0 0 1 1 0 0 0 0 0 0 0 0 0 0 0 0 0 0 0 0 0 0 0 0 0 0 0 0 0 0 0 0 0 0
0 0 0 0 0 0 1 0 0 1 0 1 1 0 0 1 1 0 0 0 0 0 0 0 0 0 0 0 0 0 0 0 0 0 0 0 0 0 0 0 0 0 0 0 0 0 0 0 0 0 0 0 0 0 0 0 0 0 0 1 0 0 0 0 0 0 0 0 0 0 0 0 0 0 0 0 0 0 0 0 0 0 0 0 0 0 0 0 0 0 0 1 0 0 0 0 0 0 0 0 0 0 0 0 0 0 0 0 0 0 0 0 0 0 0 0 0 0 0 0 0 0 0 0 0 0 0 0 0 0 0 0 0 0 0 0 0 0
0 0 0 0 0 0 0 0 1 1 0 1 0 0 0 1 1 0 0 0 0 0 0 0 0 0 0 0 0 0 0 0 0 0 0 0 0 0 0 0 0 0 0 0 0 0 0 0 0 0 0 0 0 0 0 0 0 0 1 0 1 0 0 0 0 0 0 1 0 0 1 0 0 0 0 0 0 0 0 0 0 0 0 0 0 0 0 0 0 0 1 0 1 0 0 0 0 0 0 1 0 0 1 0 0 0 0 0 0 0 0 0 0 0 0 0 0 0 0 0 0 0 0 0 0 0 0 0 0 0 0 0 0 0 0 0 0 0
0 0 0 0 0 0 0 0 0 0 0 1 0 0 0 0 0 0 0 0 0 0 0 0 0 0 0 0 0 0 0 0 0 0 0 0 0 0 0 0 0 0 0 0 0 0 0 0 0 0 0 0 0 0 0 0 0 0 1 0 1 0 0 0 0 0 0 1 1 1 1 0 0 0 0 0 0 0 0 0 0 0 0 0 0 0 0 0 0 0 1 0 1 0 0 0 0 0 0 1 1 1 1 0 0 0 0 0 0 0 0 0 0 0 0 0 0 0 0 0 0 0 0 0 0 0 0 0 0 0 0 0 0 0 0 0 0 0
0 0 0 0 0 0 0 0 0 0 0 1 1 0 0 0 0 0 0 0 0 0 0 0 0 0 0 0 0 0 0 0 0 0 0 0 0 0 0 0 0 0 0 0 0 0 0 0 0 0 0 0 0 0 0 0 0 0 0 1 0 0 0 0 0 0 0 0 0 0 0 0 0 1 0 0 0 0 0 0 0 0 0 0 0 0 0 0 0 0 0 1 0 0 0 0 0 0 0 0 0 0 0 0 0 1 0 0 0 0 0 0 0 0 0 0 0 0 0 0 0 0 0 0 0 0 0 0 0 0 0 0 0 0 0 0 0 0
0 0 0 0 1 1 0 0 0 0 0 0 0 0 0 0 0 1 0 0 0 0 0 0 0 0 0 0 0 0 0 0 0 0 0 0 0 0 0 0 0 0 0 0 0 0 0 0 0 0 0 0 0 0 0 0 0 0 0 0 0 0 0 0 0 0 0 0 0 1 1 1 1 1 0 0 0 0 0 0 0 0 0 0 0 0 0 0 0 0 0 0 0 0 0 0 0 0 0 0 0 1 1 1 1 1 0 0 0 0 0 0 0 0 0 0 0 0 0 0 0 0 0 0 0 0 0 0 0 0 0 0 0 0 0 0 0 0
0 0 0 1 0 1 0 0 0 0 0 0 0 0 0 0 1 0 1 0 0 0 0 0 0 0 0 0 0 0 0 0 0 0 0 0 0 0 0 0 0 0 0 0 0 0 0 0 0 0 0 0 0 0 0 0 0 0 0 0 0 0 0 0 0 0 0 0 0 1 0 0 0 0 0 0 0 0 0 0 0 0 0 0 0 0 0 0 0 0 0 0 0 0 0 0 0 0 0 0 0 1 0 0 0 0 0 0 0 0 0 0 0 0 0 0 0 0 0 0 0 0 0 0 0 0 0 0 0 0 0 0 0 0 0 0 0 0
0 0 0 1 0 0 0 0 0 0 0 0 0 0 0 0 0 1 0 0 0 0 0 0 0 0 0 0 0 0 0 0 0 0 0 0 0 0 0 0 0 0 0 0 0 0 0 0 0 0 0 0 0 0 0 0 0 0 0 0 0 0 0 0 0 0 0 0 0 0 0 1 0 0 0 0 0 0 0 0 0 0 0 0 0 0 0 0 0 0 0 0 0 0 0 0 0 0 0 0 0 0 0 1 0 0 0 0 0 0 0 0 0 0 0 0 0 0 0 0 0 0 0 0 0 0 0 0 0 0 0 0 0 0 0 0 0 0
1 1 0 1 0 0 0 0 0 0 0 0 0 0 0 0 0 0 0 0 0 0 0 0 0 0 0 0 0 0 0 0 0 0 0 0 0 0 0 0 0 0 0 0 0 0 0 0 0 0 0 0 0 0 0 0 0 0 1 1 0 1 1 0 0 0 0 0 0 0 1 1 0 0 0 0 0 0 0 0 0 0 0 0 0 0 0 0 0 0 1 1 0 1 1 0 0 0 0 0 0 0 1 1 0 0 0 0 0 0 0 0 0 0 0 0 0 0 0 0 0 0 0 0 0 0 0 0 0 0 0 0 0 0 0 0 0 0
0 1 0 1 0 1 1 0 0 0 0 0 0 0 0 0 0 0 0 0 0 0 0 0 0 0 0 0 0 0 0 0 0 0 0 0 0 0 0 0 0 0 0 0 0 0 0 0 0 0 0 0 0 0 1 1 0 0 0 1 0 1 1 0 0 1 1 0 0 0 0 0 0 0 0 0 0 0 0 0 0 0 0 0 0 0 1 1 0 0 0 1 0 1 1 0 0 1 1 0 0 0 0 0 0 0 0 0 0 0 0 0 0 0 0 0 0 0 0 0 0 0 0 0 0 0 0 0 0 0 0 0 0 0 0 0 0 0
0 1 0 1 0 1 1 0 0 0 0 0 0 0 0 0 0 0 0 0 0 0 0 0 0 0 0 0 0 0 0 0 0 0 0 0 0 0 0 0 0 0 0 0 0 0 0 0 0 0 0 0 0 0 1 0 0 1 0 1 0 0 0 0 0 0 1 0 0 0 0 0 0 0 0 0 0 0 0 0 0 0 0 0 0 0 1 0 0 1 0 1 0 0 0 0 0 0 1 0 0 0 0 0 0 0 0 0 0 0 0 0 0 0 0 0 0 0 0 0 0 0 0 0 0 0 0 0 0 0 0 0 0 0 0 0 0 0
1 1 0 1 0 0 0 0 0 0 0 1 1 0 0 0 0 0 0 0 0 0 0 0 0 0 0 0 0 0 0 0 0 0 0 0 0 0 0 0 0 0 0 0 0 0 0 0 0 0 0 0 0 0 0 0 1 1 0 1 1 1 1 1 1 1 0 0 0 0 0 0 0 0 0 0 0 0 0 0 0 0 0 0 0 0 0 0 1 1 0 1 1 1 1 1 1 1 0 0 0 0 0 0 0 0 0 0 0 0 0 0 0 0 0 0 0 0 0 0 0 0 0 0 0 0 0 0 0 0 0 0 0 0 0 0 0 0
1 0 0 1 1 1 1 0 0 0 1 0 0 1 0 0 0 0 0 0 0 1 1 0 0 0 0 0 0 0 0 0 0 0 0 0 0 0 0 0 0 0 0 0 0 0 0 0 0 0 0 0 0 0 0 0 0 0 1 0 0 0 0 0 0 0 0 0 0 0 0 0 0 0 0 0 0 0 0 0 0 0 0 0 0 0 0 0 0 0 1 0 0 0 0 0 0 0 0 0 0 0 0 0 0 0 0 0 0 0 0 0 0 0 0 0 0 0 0 0 0 0 0 0 0 0 0 0 0 0 0 0 0 0 0 0 0 0
0 1 1 0 0 0 1 0 0 0 0 1 1 0 0 0 0 0 0 0 0 1 0 1 0 0 0 0 0 0 0 0 0 0 0 0 0 0 0 0 0 0 0 0 0 0 0 0 0 0 0 0 0 0 0 0 0 0 1 0 1 1 1 1 0 0 0 0 0 0 0 0 0 0 0 0 0 0 0 0 0 0 0 0 0 0 0 0 0 0 1 0 1 1 1 1 0 0 0 0 0 0 0 0 0 0 0 0 0 0 0 0 0 0 0 0 0 0 0 0 0 0 0 0 0 0 0 0 0 0 0 0 0 0 0 0 0 0
0 0 0 1 1 0 0 0 0 0 0 0 0 0 0 0 0 0 0 0 0 0 0 1 0 0 0 0 0 0 0 0 0 0 0 0 0 0 0 0 0 0 0 0 0 0 0 0 0 0 0 0 0 0 0 0 0 0 0 1 1 0 0 1 0 0 0 0 0 0 0 0 0 0 0 0 0 0 0 0 0 0 0 0 0 0 0 0 0 0 0 1 1 0 0 1 0 0 0 0 0 0 0 0 0 0 0 0 0 0 0 0 0 0 0 0 0 0 0 0 0 0 0 0 0 0 0 0 0 0 0 0 0 0 0 0 0 0
0 0 0 1 0 0 0 0 0 0 0 0 0 0 0 0 0 0 0 0 0 0 0 1 1 0 0 0 0 0 0 0 0 0 0 0 0 0 0 0 0 0 0 0 0 0 0 0 0 0 0 0 0 0 0 0 0 0 0 0 0 0 0 0 0 0 0 0 0 0 0 0 0 0 0 0 0 0 0 0 0 0 0 0 0 0 0 0 0 0 0 0 0 0 0 0 0 0 0 0 0 0 0 0 0 0 0 0 0 0 0 0 0 0 0 0 0 0 0 0 0 0 0 0 0 0 0 0 0 0 0 0 0 0 0 0 0 0
0 0 0 0 0 1 0 0 0 0 0 0 0 0 0 0 0 0 0 0 0 0 0 0 0 0 0 0 0 0 0 0 0 0 0 0 0 0 0 0 0 0 0 0 0 0 0 0 0 0 0 0 0 0 0 0 0 0 0 0 0 0 0 0 0 0 0 0 0 0 0 0 0 0 0 0 0 0 0 0 0 0 0 0 0 0 0 0 0 0 0 0 0 0 0 0 0 0 0 0 0 0 0 0 0 0 0 0 0 0 0 0 0 0 0 0 0 0 0 0 0 0 0 0 0 0 0 0 0 0 0 0 0 0 0 0 0 0
0 0 0 0 1 1 0 0 0 0 0 0 0 0 0 0 0 0 0 0 0 0 0 0 0 0 0 0 0 0 0 0 0 0 0 0 0 0 0 0 0 0 0 0 0 0 0 0 0 0 0 0 0 0 0 0 0 0 0 0 0 0 0 0 0 0 0 0 0 0 0 0 0 0 0 0 0 0 0 0 0 0 0 0 0 0 0 0 0 0 0 0 0 0 0 0 0 0 0 0 0 0 0 0 0 0 0 0 0 0 0 0 0 0 0 0 0 0 0 0 0 0 0 0 0 0 0 0 0 0 0 0 0 0 0 0 0 0
\end{filecontents}

\bibliographystyle{plainurl}

\title{What can oracles teach us about the ultimate fate of life?}

\author{Ville Salo}{Department of Mathematics and Statistics, University of Turku, Finland}{vosalo@utu.fi}{https://orcid.org/0000-0002-2059-194X}{Research supported by Academy of Finland grant 2608073211.} 
\author{Ilkka T\"orm\"a}{Department of Mathematics and Statistics, University of Turku, Finland}{iatorm@utu.fi}{https://orcid.org/0000-0001-5541-8517}{} 

\authorrunning{V. Salo and I. T\"orm\"a}

\Copyright{Ville Salo and Ilkka T\"orm\"a} 


\ccsdesc[500]{Theory of computation~Formal languages and automata theory}


\keywords{Game of Life, cellular automata, limit set, symbolic dynamics} 


\relatedversiondetails{Full Version}{https://arxiv.org/abs/2202.07346} 

\supplementdetails[subcategory={source code},cite=Program]{Software}{https://github.com/ilkka-torma/gol-agars} 


\acknowledgements{We thank the anonymous referees for their useful suggestions, and the nice people of the ConwayLife forum for their interest.}

\EventEditors{Miko{\l}aj Boja\'{n}czyk, Emanuela Merelli, and David P. Woodruff}
\EventNoEds{3}
\EventLongTitle{49th International Colloquium on Automata, Languages, and Programming (ICALP 2022)}
\EventShortTitle{ICALP 2022}
\EventAcronym{ICALP}
\EventYear{2022}
\EventDate{July 4--8, 2022}
\EventLocation{Paris, France}
\EventLogo{}
\SeriesVolume{229}
\ArticleNo{119}

\begin{document}
\maketitle

\begin{abstract}
We settle two long-standing open problems about Conway's Life, a two-dimensional cellular automaton. We solve the Generalized grandfather problem: for all $n \geq 0$, there exists a configuration that has an $n$th predecessor but not an $(n+1)$st one. We also solve (one interpretation of) the Unique father problem: there exists a finite stable configuration that contains a finite subpattern that has no predecessor patterns except itself. In particular this gives the first example of an unsynthesizable still life. The new key concept is that of a spatiotemporally periodic configuration (agar) that has a unique chain of preimages; we show that this property is semidecidable, and find examples of such agars using a SAT solver.

Our results about the topological dynamics of Game of Life are as follows: it never reaches its limit set; its dynamics on its limit set is chain-wandering, in particular it is not topologically transitive and does not have dense periodic points; and the spatial dynamics of its limit set is non-sofic, and does not admit a sublinear gluing radius in the cardinal directions (in particular it is not block-gluing). Our computability results are that Game of Life's reachability problem, as well as the language of its limit set, are PSPACE-hard.
\end{abstract}

\section{Introduction}

  Conway's Game of Life is a famous two-dimensional cellular automaton defined by John Horton Conway in 1970 and popularized by Martin Gardner~\cite{Ga70}. A cellular automaton can be thought of as zero-player game: the board is set up, and a simple rule determines the dynamics. In the case of Game of Life, the board is the two-dimensional infinite grid, where some grid cells are \emph{live}, and some are \emph{dead} (or \emph{empty}); the evolution rule, executed simultaneously in all cells, is that a dead cell becomes live if it has exactly three live (cardinal or diagonal) neighbors, and a live cell stays live if and only if it has two or three live neighbors.
  
  Iterating this rule gives rise to very complicated dynamics. Engineering patterns with interesting behaviors, and searching for such patterns by computer, has been an ongoing effort since the invention of the rule. For readers interested in delving into this world, we cite the very recent (and freely available) book \cite{JoGr22} of Johnston and Greene. One result that exemplifies the complexity of Game of Life is that it is \emph{intrinsically universal} \cite{DuRo99}, meaning that Game of Life can simulate any two-dimensional cellular automaton $f$ (including proper self-simulation), so that the states of $f$ correspond to large blocks with special content, and one step of $f$ is simulated in multiple steps of Game of Life.

  Game of Life can be thought of as a mathematical \emph{complex system}, namely it is a system where complex global behavior arises from interacting (simple) local rules. Such systems can be notoriously difficult to study. We can often use computer simulations to make empirical observations about typical and eventual behavior, but it can be very difficult to actually prove that a particular behavior persists on larger scales (even if it seems like its failure would require a massive conspiracy). Usually one can only successfully analyze systems that are very simple \cite{Fu17}, or their behavior simulates a phenomenon that is mathematically well-understood, say of an algebraic \cite{DeFoGrMa20} or number theoretic \cite{Ka12} nature, or the systems are specifically constructed for some purpose \cite{Lu10}. Due to intrinsic universality, it seems unlikely that Game of Life fits in any of these classes. 
  
  Indeed, for Game of Life, despite decades of study by enthusiasts, almost no non-trivial mathematical results exist that state limitations on its eventual behavior. In other words, as a dynamical system, we know very little about it. From computer simulations, one can conclude that Game of Life is highly ``chaotic'', and one can make educated guesses about things like the typical population density after a large number of iterations; however, it is very hard to make such claims rigorous. Rigorous results about Game of Life do exist, but they concern mostly the behavior of Game of Life on nice configurations (the engineering feats discussed above are of this type), and no known pattern behaves predictably in a general context; alternatively, they deal with one-step or static behavior \cite{El99}.

  In this paper, we study Game of Life through its \emph{agars}, which are the Game of Life community's term for spatiotemporally periodic points. More specifically, we observe that a simple algorithm (essentially Wang's partial algorithm from~\cite{Wa61}) can be used to find all agars with small enough periodicity parameters that have a unique chain of predecessors. We then study finite patches of these agars, and find some with interesting backwards forcing properties. Namely, these patterns behave deterministically in the (a priori nondeterministic) backwards dynamics of Game of Life. This intuitively allows us to study the ``last iterations'' of Game of Life (after an unknown number of steps), and leads to a wealth of results about how Game of Life behaves ``in the limit''. 
   
  One practical difficulty is that the algorithm we use is not of the usual kind, but rather it is in the class FP$^\text{NP}$ of problems that are solvable in polynomial time with an NP oracle. Throughout this work, modern SAT solvers have constantly impressed and even humbled us by how freely they can be used as such oracles.\footnote{For example, in our experience, general-purpose constraint-solvers and our own CA-specific solvers often fail even on the basic problem of finding a Game of Life preimage, while SAT solvers happily tell us, say, whether a preimage exists with particular constraints, and can find preimages for higher powers of Life.} While their role is not very explicit in the final write-up of the paper, this work would not have been possible without them.
  
  We do not expect the method of studying the eventual dynamics through self-enforcing patterns to be specific to Game of Life. Indeed, one can apply it directly to any cellular automaton rule (with any number of dimensions), and the idea can presumably be adapted to other systems as well. The reason we study a single example cellular automaton is that the results require us to find a ``witness'', usually a self-enforcing agar, and there is no \emph{a priori} bound on how long this pattern-crunching will take -- or whether it will succeed at all -- for a particular rule. A single agar also tends to only work for a single rule or trivial modifications thereof. The choice of precisely Game of Life as the example rule to study is not mathematically motivated, it is simply a well-known simple rule that has already been studied extensively. Some of our programs are available on GitHub at~\cite{Program}, for readers interested in trying the methods out on other rules.
    

  \subsection{The protagonists}
  \label{sec:Protagonists}
  
\begin{figure}[htp]
\begin{center}
  \pgfplotstableread{pattern_koynnos.cvs}{\matrixfile}
  \begin{subfigure}[b]{.27\linewidth}
  \begin{center}\begin{tikzpicture}[scale=0.18]
  \drawit{}
  \end{tikzpicture}\end{center}
  \vspace{-0.3cm}
  \caption{\Koynnos{}.}\label{fig:koynnos}
  \end{subfigure}
  \;\;
  \pgfplotstableread{pattern_kynnos.cvs}{\matrixfile}
  \begin{subfigure}[b]{.27\linewidth}
  \begin{center}\begin{tikzpicture}[scale=0.18]
  \drawit{}
  \end{tikzpicture}\end{center}
  \vspace{-0.3cm}
  \caption{\Kynnos{}.}\label{fig:kynnos}
  \end{subfigure}
\;\;
  \pgfplotstableread{pattern_marssiorkesteri.cvs}{\matrixfile}
  \begin{subfigure}[b]{.3\linewidth}
  \begin{center}\begin{tikzpicture}[scale=0.18]
  \drawit{}
  \end{tikzpicture}\end{center}
  \vspace{-0.3cm}
  \caption{Marching band.}\label{fig:marssiorkesteri}
  \end{subfigure}
\end{center}
  \caption{Patches of the agars. A $3$-by-$3$ grid of the repeating patterns is shown for each. Cyan cells are live.}
  \label{fig:Protagonists}
\end{figure}
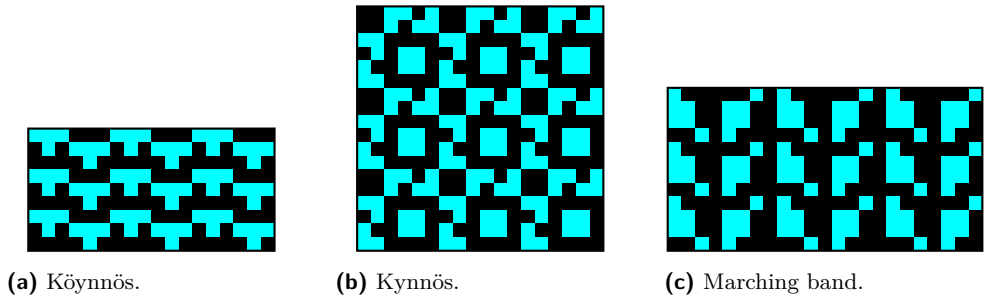

  We begin with a brief discussion of the agars that we use to prove our results. These will be explained in more detail in separate sections.
  
  Figure~\ref{fig:koynnos} shows the pattern we call \koynnos{}\footnote{Finnish for vine.}. The infinite agar obtained by repeating this pattern infinitely in each direction has no preimage other than itself; we say it is \emph{self-enforcing}. Up to symmetries there are exactly $11$ self-enforcing agars of size $6 \times 3$. \Koynnos{} has the special property that it is impossible to stabilize a finite difference to this configuration: if one modifies the agar in finitely many cells, the difference spreads at the speed of light (one column of cells per time step, which is the maximal speed at which information can be transmitted by Game of Life). We say it \emph{cannot be stabilized from the inside}.
  
  Figure~\ref{fig:kynnos} shows the pattern we call \kynnos{}\footnote{Finnish for that which is tilled.}. The corresponding infinite agar is again self-enforcing. Up to symmetries there are at least 52 self-enforcing agars of size $6 \times 6$ (we were unable to finish the search, so it is possible that more exist). \Kynnos{} has two special properties. First, it contains a finite patch such that if a configuration has this patch in its image, then the configuration already had that patch in place, i.e.\ one cannot synthesize it from any other patch. Second, unlike \koynnos{}, it can be stabilized from the inside.
  
  Figure~\ref{fig:marssiorkesteri} shows the pattern we call marching band\footnote{English for marssiorkesteri.}. This agar has temporal period two. Its most important property is that an infinite south half-plane of this pattern must shrink if there is a difference on its border, meaning that in the nondeterministic inverse dynamics of Game of Life, an infinite south half-plane of this pattern ``marches'' to the north.
  
  To find the marching band, we searched through all $w \times h$-patterns such that the corresponding agar with periods $(w,0)$ and $(0,h)$ is temporally (exactly) $t$-periodic, for the parameter range $2 \leq w \leq 9$, $2 \leq h \leq 5$, $2 \leq t \leq 3$. There were no self-enforcing agars with temporal period $3$ in this range, and there were exactly $14$ self-enforcing agars with period $2$. The marching band is the only one that has the marching property in any direction.

\subsection{Results}
\label{sec:Results}

Denote by $g : \{0,1\}^{\Z^2} \to \{0,1\}^{\Z^2}$ the Game of Life cellular automaton, where dead cells are represented by 0 and live cells by 1. In this section, we list all our new technical contributions about $g$. The reader should consult Section~\ref{sec:def} for precise definitions of terms used in this section. First, we solve the Generalized grandfather problem: for all $n \geq 0$, there exists a configuration that has an $n$th predecessor but not an $(n+1)$st one.
  
  \begin{theorem}[Generalized grandfather problem]
  \label{thm:Unstable}
  For each $n \geq 0$, there exists $x \in \{0,1\}^{\Z^2}$ with $g^{-n}(x) \neq \emptyset$ and $g^{-(n+1)}(x) = \emptyset$.
\end{theorem}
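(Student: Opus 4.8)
The plan is to realise the prescribed backward depth, for every $n$, by a finite perturbation of the self-enforcing agar \koynnos{}; write $A$ for this agar, so $g^{-1}(A)=\{A\}$. From $g^{-1}(A)=\{A\}$ and a compactness argument one first obtains a radius $r$ such that any $g$-preimage of a configuration agreeing with $A$ outside a box $B$ must agree with $A$ outside $B\oplus[-r,r]^2$. Iterating, if $x$ agrees with $A$ outside a finite box $B_x$ then every element of $g^{-n}(x)$ agrees with $A$ outside $B_x\oplus[-nr,nr]^2$; in particular $g^{-n}(x)$ is a \emph{finite} set, all of whose elements are finite perturbations of $A$. Also, since $g^{-1}(A)=\{A\}$, the only element of any such preimage set that equals $A$ is $A$ itself, so if $x\neq A$ then every $y\in g^{-n}(x)$ differs from $A$.

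Next I would use that \koynnos{} cannot be stabilised from the inside: a nonempty finite perturbation of $A$ never evolves (forward) back to $A$, and its difference set spreads at the speed of light, so there is a constant $c$ with the property that, for any finite perturbation $y\neq A$, the smallest box containing $\diff(g^t(y),A)$ has side length at least $t-c$. Combining this with the previous paragraph: if $x\neq A$ and $g^{-n}(x)\neq\emptyset$, choose $y\in g^{-n}(x)$; then $y\neq A$ and $x=g^n(y)$, so the difference box of $x$ has side length at least $n-c$. Contrapositively, the backward depth of a finite perturbation $x\neq A$ is at most the side length of $\diff(x,A)$'s bounding box plus $c$ — finite perturbations with small defects have small backward depth.

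For the construction, note that there is a finite perturbation $y_0$ of $A$ that is a Garden of Eden, i.e.\ $g^{-1}(y_0)=\emptyset$; this is an ``orphan on the \koynnos{} agar'', and such a pattern is found by a finite search (a SAT query), exactly as ordinary Life orphans are. Put $x_n:=g^n(y_0)$. Then $y_0\in g^{-n}(x_n)$, so $g^{-n}(x_n)\neq\emptyset$ (for $n=0$ this is simply the assertion that $y_0$ is a Garden of Eden). To finish we must show $g^{-(n+1)}(x_n)=\emptyset$: by the first paragraph $g^{-n}(x_n)$ is a finite set of finite perturbations of $A$, and I would aim for the sharp statement $g^{-n}(x_n)=\{y_0\}$, whence $g^{-(n+1)}(x_n)=g^{-1}(y_0)=\emptyset$.

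The main obstacle is precisely proving $g^{-n}(x_n)=\{y_0\}$, i.e.\ that the length-$n$ preimage chain of $x_n$ is forced — equivalently that $g$ is injective along the orbit segment $y_0,g(y_0),\dots,g^{n-1}(y_0)$, uniformly in $n$. This is where ``cannot be stabilised from the inside'' does the work a second time: the difference set of $g^k(y_0)$ has an outer layer that has already travelled at full light speed, so any preimage is forced to equal $A$ outside that layer and to equal the unique light-speed-propagated pattern on it, leaving only a bounded inner core; one then argues that once the core is large enough no freedom remains there either, so that for the fixed $y_0$ only finitely many small values of $k$ require a direct (SAT) verification. Turning this into a statement that is genuinely uniform in $n$ is the delicate point; everything else is compactness and bookkeeping, and the $n=0$ case of the theorem is exactly that $y_0$ has no predecessor.
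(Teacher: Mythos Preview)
Your first two paragraphs are correct and already contain everything needed. From self-enforcing plus compactness and periodicity you get the uniform radius $r$; from the horizontal speed-of-light spread (the paper's Lemma~\ref{lem:explodev2}) you get that every nontrivial finite perturbation of $A$ has finite backward depth, bounded linearly by the width of its defect. (A small caveat: the paper only proves horizontal spread, not spread in all directions, but horizontal width alone bounds the backward depth, so this is harmless.)

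The gap is in your third and fourth paragraphs. You do not need a SAT search to produce a Garden-of-Eden perturbation $y_0$: its existence already follows from finite backward depth (take any deepest preimage of a one-cell perturbation). More importantly, the step you flag as ``the main obstacle'', namely $g^{-n}(x_n)=\{y_0\}$, is both unproved and unnecessary. Your sketch for it does not work: the annulus forcing tells you preimages agree with $A$ \emph{outside} a shrinking hole, but says nothing about the \emph{contents} of the hole, so there is no reason the inner core should be forced, and in general $g^{-n}(x_n)$ will not be a singleton.

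Here is the missing (and much easier) completion. You have already shown that every nontrivial finite perturbation $z$ of $A$ has some finite backward depth $d$. Pick $w\in g^{-d}(z)$ and set $y_k=g^k(w)$ for $0\le k\le d$. Then $y_k$ has backward depth exactly $k$: it has $w$ as a $k$th preimage, and any $(k+1)$st preimage of $y_k$ would be a $(d+1)$st preimage of $z$. Finally, backward depths are unbounded: for a one-cell perturbation $y$, the configuration $g^n(y)$ is a nontrivial finite perturbation of $A$ (since $g^{-1}(A)=\{A\}$ forces $g^n(y)\neq A$) with backward depth at least $n$. This gives a witness of backward depth exactly $n$ for every $n$, proving the theorem.

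The paper takes a different and more constructive route. It never argues about uniqueness of preimage chains; instead it proves directly that a specific finite patch $p=g^k(y)|_{R_k}$ (with $y$ a one-cell perturbation and $R_k$ an explicit rectangle) satisfies $\hat g^{k+1}(p)=\top$. The mechanism is an annulus lemma: a thick enough \koynnos{} annulus forces, in its $g$-preimage, a slightly thinner annulus whose inner hole is two columns narrower. Iterating $k+1$ times from the annulus surrounding the size-$O(k)$ hole of $p$, the hole closes completely, so any $(k+1)$st preimage would be a patch of $A$, which after $k+1$ forward steps could not match $p$. This yields not just existence but an explicit polynomial-time construction (Theorem~\ref{thm:polytime}); your route, once completed as above, is nonconstructive but shorter.
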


The case of $n = 0$ (that $g$ is not surjective) was resolved by R.\ Banks in 1971, only a year after the introduction of Game of Life. Conway stated the Grandfather problem, namely the case $n = 1$ of the above, in 1972, and promised \$50 in the \emph{Lifeline} newsletter \cite{Wa72} for its solution. This stayed open until 2016, when the cases $n \in \{1, 2, 3\}$ were proved by the user mtve of the ConwayLife forum. We note (see Lemma~\ref{lem:finite-unstable} for the proof) that while Theorem~\ref{thm:Unstable} refers to infinite configurations, the analogous statements for finite patterns or finite-population configurations are equivalent to it.
  Cellular automata satisfying the conclusion of Theorem~\ref{thm:Unstable} are sometimes called ``unstable'' \cite{Ma95}, though we avoid this terminology here, as ``stable'' has another meaning in Game of Life jargon.
  
  More specifically, we prove the following two results, which strengthen Theorem~\ref{thm:Unstable} in different directions. The first result is proved using \koynnos{} and is based on the fact it cannot be stabilized from the inside.
  The notation $g^{-n}(p)$ for a finite pattern $p$ of shape $D \subset \Z^2$ stands for the set of patterns of shape $D + [-n,n]^2$ that evolve into $p$ in $n$ steps.
  
\begin{restatable}{thm}{polytime}
  \label{thm:polytime}
  There exists a polynomial time algorithm that, given $n \geq 0$ in unary, produces a finite pattern $p$ with $g^{-n}(p) \neq \emptyset$ and $g^{-(n+1)}(p) = \emptyset$.
\end{restatable}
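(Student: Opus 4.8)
The plan is to let $p$ be a large finite window of $g^n(\hat x)$, where $\hat x$ is a fixed configuration obtained from the \koynnos{} agar $A$ by altering it inside a bounded region with a suitable ``orphan seed''. First fix, once and for all, a finite pattern $s$ such that the configuration $\hat x$ equal to $A$ outside the support of $s$ and to $s$ inside it is itself a Garden of Eden, i.e.\ $g^{-1}(\hat x)=\emptyset$: some finite orphan exists because $g$ is not surjective (Banks), placing the agar around any finite orphan keeps it an orphan, and a seed whose surrounding $\hat x$ additionally enjoys the rigidity needed below can be exhibited by computer search. Since $A$ cannot be stabilized from the inside, for every $t\geq 0$ the configuration $Y_t:=g^t(\hat x)$ differs from $A$ only inside a square $K_t$ of radius $t+O(1)$, and never equals $A$. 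We set $p=p_n:=Y_n|_{W_n}$ with $W_n:=K_{Cn}$ for a large absolute constant $C$, so that $p_n$ agrees with $A$ on an agar collar of width $\Theta(n)$ around its boundary. (For $n=0$ this $p_0$ is just a plain finite orphan, as it must be, since then the requirement reduces to $g^{-1}(p_0)=\emptyset$.) Producing $p_n$ amounts to simulating $g$ for $n$ steps on an $O(n)\times O(n)$ array and reading off the central window, which takes time polynomial in $n$ when $n$ is given in unary, so the algorithmic content of the statement is immediate once the pattern has been described.

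That $g^{-n}(p_n)\neq\emptyset$ is direct: the restriction of $\hat x$ to $W_n+[-n,n]^2$ evolves to $p_n$ in $n$ steps, because $g^n(\hat x)|_{W_n}$ depends only on $\hat x|_{W_n+[-n,n]^2}$ and equals $p_n$ by definition. To see $g^{-(n+1)}(p_n)=\emptyset$, suppose a pattern $z$ of shape $W_n+[-(n+1),n+1]^2$ has $g^{n+1}(z)=p_n$. I would first pass from this finite pattern to an honest configuration via a \emph{collar-forcing lemma}: because $A$ is self-enforcing (so $g^{-k}(A)=\{A\}$ for all $k$), a compactness argument yields an absolute constant $r$ such that any pattern agreeing with $A$ on an inner agar collar of width $w$ has all of its $g$-preimages agreeing with $A$ on an inner collar of width $w-r$. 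Iterating $n+1$ times, $z$ agrees with $A$ on a collar of width $\Theta(n)$; filling its complement with $A$ produces a configuration $\tilde z$ that is a bounded modification of $A$, and---since the size of $W_n$ leaves ample room for the light cones over $n+1$ steps---one checks that $g^{n+1}(\tilde z)=Y_n$ holds everywhere, not merely on $W_n$.

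Since $\tilde z$ would then be a configuration mapping onto $Y_n$ in $n+1$ steps, it now suffices to prove that $g^{-(n+1)}(Y_n)=\emptyset$ as a statement about configurations. For this I would establish that the orbit of $\hat x$ has a \emph{unique chain of preimages}: $Y_t$ has the single $g$-preimage $Y_{t-1}$ for every $t\geq 1$, while $Y_0=\hat x$ has none; then $g^{-n}(Y_n)=\{\hat x\}$ and $g^{-(n+1)}(Y_n)=g^{-1}(\hat x)=\emptyset$. The heart of the proof, and the step I expect to be the main obstacle, is precisely this uniqueness: one must rule out any ``spurious'' preimage chain of $Y_n$, peeling the bounded defect of $Y_t$ back one light-cone layer at a time, each layer pinned down by the \koynnos{} agar surrounding it. This is where one uses---beyond $A$ being self-enforcing---that \koynnos{} \emph{cannot be stabilized from the inside}, since if a finite difference to $A$ could be stabilized, the defect could shrink or deform under the backward dynamics and alternative chains would appear. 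The speed-of-light spreading also makes the statement finitary: the defect of any $n$-step preimage of $Y_n$ lies in a region of size $O(1)$ independent of $n$, so there are only finitely many candidate chains in all, and the required uniqueness is a computationally checkable property of the chosen seed $s$.
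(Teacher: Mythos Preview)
Your setup (perturb the \koynnos{} agar, evolve $n$ steps, take a window with a thick agar collar) is exactly the paper's. The gap is in the final step. Your compactness-based collar-forcing lemma is correct but points the wrong way for what you need: it says the agar collar shrinks by $r$ in each preimage, i.e.\ the central hole \emph{grows}. To get the hole to \emph{shrink} you need the quantitative content of ``cannot be stabilized from the inside'', proved in the paper as Lemma~\ref{lem:explodev2}: any finite perturbation of \koynnos{} grows horizontally by at least one column on each side per forward step, hence shrinks by at least one per backward step. This is not a consequence of self-enforcing plus compactness, and ``speed-of-light spreading'' usually names the trivial upper bound on defect growth; the matching lower bound is the nontrivial input and requires a case analysis on the agar.

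Even granting that lemma, your target --- uniqueness of the preimage chain, $g^{-1}(Y_t) = \{Y_{t-1}\}$ for all $t \geq 1$ --- is stronger than needed and does not reduce to a finite check. Horizontal shrinking gives an $O(1)$ bound on the horizontal extent of the defect of an $n$-step preimage, but the vertical extent can still be $\Theta(n)$ (the paper's own Lemma~\ref{lem:KoynnosAnnuli} allows the hole to grow vertically by a constant each backward step), so there are not ``finitely many candidate chains in all''. The paper sidesteps uniqueness entirely: take the seed to be a single flipped cell, so the initial defect has horizontal extent $1$; then $Y_n$ has horizontal defect extent at most $2n+1$, and iterating Lemma~\ref{lem:explodev2} (packaged as Lemma~\ref{lem:KoynnosAnnuli}) forces any $(n+1)$-step preimage of the surrounding annulus to agree with \koynnos{} on the \emph{entire} inner rectangle --- the hole closes up completely. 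Since the actual pattern $p_n$ differs from \koynnos{} somewhere in that rectangle, this is an immediate contradiction. No orphan seed and no chain uniqueness are needed; the argument is a direct iteration of $\hat g$ on annuli.
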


The algorithm is very simple: change the value of one cell in the agar, apply the Game of Life rule $n$ times, and pick the central $[-30-6n, 30+6n] \times [-27-8n, 27+8n]$-patch of the resulting configuration as $p$. An example with $n = 28$ is shown in Figure~\ref{fig:orphan28} (with insufficient padding: the periodic background should extend 164 cells further to the left and right, and 220 cells up and down).

\begin{figure}[htp]
\begin{center}
  \pgfplotstableread{orphan_28.cvs}{\matrixfile}
  \begin{tikzpicture}[scale=0.1]
  \drawit{}
  \end{tikzpicture}
  \vspace{-0.2cm}
  \caption{A ``level-$29$ orphan'' obtained by perturbing \koynnos{}: these angry deities could be found 28 seconds after the Big Bang, then went extinct.}\label{fig:orphan28}
  \end{center}
\end{figure}
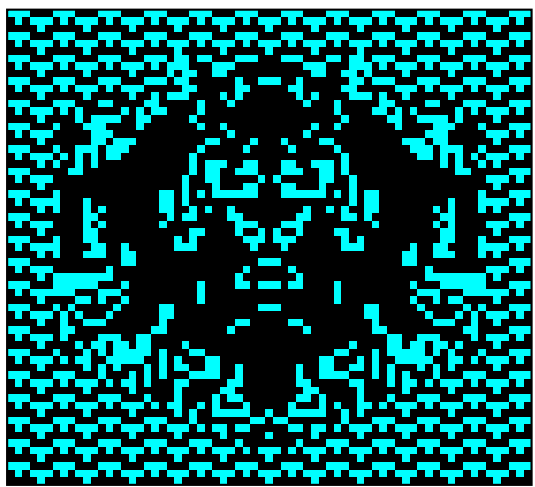

The second result is proved using \kynnos{}, and is based on the facts that \kynnos{} admits a self-enforcing patch and that it can be stabilized from the inside. The following was first pointed out by Adam Goucher \cite{Go22forum}.

  \begin{restatable}{thm}{rateofgrowth}
  \label{thm:rateofgrowth}
  For any large enough $n$, there exists an $n \times n$ pattern which appears in the $k$th image of Game of Life, but does not appear in its $(k+1)$st image, where $2^{n^2/368 - O(n)} \leq k \leq 2^{n^2}$.
  \end{restatable}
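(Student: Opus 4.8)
The plan is to realise a \emph{binary counter} with an exponentially long but finite lifespan as a bounded perturbation of \kynnos{}, and to use the self-enforcing patch of \kynnos{} to certify that the spent counter cannot be produced one step later. Write $P$ for this patch, so that whenever $g(y)$ contains a copy of $P$ at some position, $y$ already contains a copy of $P$ there. Fix a large $n$. Inside the $n\times n$ window I place a thin closed rectangular \emph{frame} of \kynnos{} (width $O(1)$) tiled by copies of $P$, so that its interior, the \emph{bubble}, is an $(n-O(1))\times(n-O(1))$ region. Since \kynnos{} \emph{can be stabilized from the inside}, one can build inside the bubble an $m$-bit binary counter out of finite \kynnos{}-compatible gadgets (bit cells, a carry head, and wiring routing the head past all $m$ bits along a serpentine path), where $m:=\lfloor(\text{bubble area})/368\rfloor = n^2/368 - O(n)$. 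Let $f$ denote the one-step evolution of a bubble configuration when the frame is held at its \kynnos{} content $W$ (well-defined, as one Life step on the bubble depends only on the bubble and the abutting part of the frame), and let $s_0\to s_1\to\cdots\to s_L$ be the orbit of the counter started from $s_0=$ ``all $m$ bits set, carry head at its start position''. The gadgets are to be designed so that: (i) $s_0$ has no $f$-preimage among bubble states; (ii) for $1\le i\le L$, the state $s_i$ has the unique $f$-preimage $s_{i-1}$; (iii) the active part of the counter never reaches the frame during these $L$ steps; and (iv) $L=\Theta(m\,2^m)$, the time the counter takes to run down from $2^m-1$ to $0$. The target pattern is $q:=s_L\cup W$, the $n\times n$ pattern showing the spent counter framed by \kynnos{}.

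That $q$ occurs in the $L$th image is immediate: let $x$ be $s_0$ on the bubble and \kynnos{} everywhere else; by (iii) the perturbation stays inside the bubble, so $g^L(x)$ is \kynnos{} outside the bubble and equal to $f^L(s_0)=s_L$ inside it, hence contains $q$. For the converse, suppose $g^{L+1}(x)$ contained $q$ at the origin for some $x$. Then $g^{L+1}(x)$ contains every copy of $P$ making up the frame of $q$, and applying the defining property of $P$ a total of $L+1-j$ times shows that $g^j(x)$ contains these same copies of $P$ for all $0\le j\le L+1$; since these copies tile the frame, $g^j(x)$ agrees with $W$ on the whole frame at each such time. The \kynnos{} frame then acts as a fixed boundary condition for the bubble, so $g^{L+1}(x)|_{\text{bubble}}=f^{L+1}(s)$ with $s:=x|_{\text{bubble}}$, and this must equal $s_L$. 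But by (i)--(ii) the maximal $f$-preimage chain ending at $s_L$ is $s_L\leftarrow s_{L-1}\leftarrow\cdots\leftarrow s_0\leftarrow(\text{nothing})$, of length exactly $L$, so $f^{L+1}(s)=s_L$ has no solution --- a contradiction. Hence $q$ is absent from the $(L+1)$st image, and the theorem holds with $k=L$: from $m=n^2/368-O(n)$ and $L=\Theta(m\,2^m)$ we get $k=L\ge 2^m=2^{n^2/368-O(n)}$, while $m\le n^2$ with ample slack gives $k=\Theta(m\,2^m)\le 2^{n^2}$ for all large $n$.

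The main obstacle is the engineering step hidden in the first paragraph: constructing a binary counter out of finite gadgets that are simultaneously consistent with \kynnos{}, bounded by a frame of copies of $P$, backward deterministic step by step, and whose freshly initialised state $s_0$ is a genuine orphan perturbation. The reason this is tractable at all is that \kynnos{} is self-enforcing, which makes the global properties (i) and (ii) decompose into finitely many local assertions about the individual gadgets --- of the form ``this gadget in this state has a unique (resp.\ no) predecessor gadget-state given \kynnos{}-consistent surroundings'' --- that can be discharged with a SAT solver; the constant $368$ measures the per-bit area cost of an assembly that counts, stays confined, and keeps its frame rigid all at once. Getting all of these to hold over the fixed background \kynnos{}, rather than over empty space, is where the real work lies.
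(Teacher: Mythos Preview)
Your logical skeleton is sound: a self-enforcing \kynnos{} ring confines the backward dynamics to a finite interior state space, so a long forward run inside it witnesses a high-level orphan. But the proof has a genuine gap: the entire engineering step---the existence of backward-deterministic, \kynnos{}-compatible counter gadgets with an orphan initial state---is asserted, not carried out, and you yourself concede it is ``where the real work lies.'' Without it the argument is a plan, not a proof; and there is no reason to believe the per-bit area cost of such an assembly would come out to $368$ rather than some other (possibly much larger) constant.

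More to the point, you have made the engineering far harder than necessary. The theorem only asks for $k$ in a range, not a specific value, so your properties (i) and (ii) are superfluous. It suffices that the target pattern $q$ (a) lies in the $T$th image for some $T \geq 2^{n^2/368 - O(n)}$ and (b) lies outside the limit set; then $q$ is a level-$(k{+}1)$ orphan for some $k$ with $T \leq k < 2^{\text{interior area}} \leq 2^{n^2}$. For (a) one needs only a forward run of length $T$; for (b) one needs only that the final interior state is not $f$-periodic. Nor must the gadgets live on a \kynnos{} background: the ring of Figure~\ref{fig:KynnosHole} has an all-$0$ interior, so off-the-shelf Life machinery on empty background works inside it. The paper exploits exactly this. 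A period-$48$ glider gun feeds a chain of $(2a{+}1)b$ quadri-snarks (each a $4{:}1$ glider divider) on empty background, enclosed in a \kynnos{} ring, with the last emitted glider aimed at the ring wall; the state just before impact is the target pattern. It has a preimage chain of length $T = 48 \cdot 4^{(2a+1)b}$ by the forward run, and its interior is not $f$-periodic because of the impending collision. The constant $368$ then drops out of the known footprints of these specific gadgets (support in $[0,32a{+}73]\times[0,46b]$; taking $a=23s$, $b=16s$ yields side length ${\approx}\,736s$ and $T \approx 4^{736 s^2} = 2^{n^2/368}$), not from any new construction you would still have to perform.
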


  This is (up to a suitable equivalence relation) the optimally slow growth rate for higher level orphans. The same idea can be used to obtain the following results about the limit set of Game of Life.
  Also called the eventual image, it is the set of configurations with arbitrarily long chains of predecessors.
  The language of the limit set refers to the set of finite patterns occurring in it.

  \begin{theorem}
  \label{thm:pspacehard}
  The limit set of Game of Life has PSPACE-hard language.
  \end{theorem}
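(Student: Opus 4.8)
The plan is to reduce, in polynomial time, from the PSPACE-complete problem of deciding whether a fixed deterministic Turing machine $M$ using space $p(\cdot)$ accepts a given input $x$; by Savitch's theorem taking $M$ deterministic is harmless. Writing $m=|x|$, I will build from $(M,x)$ a finite pattern $P=P(M,x)$ of size $\mathrm{poly}(m)$ such that $P$ occurs in the limit set of Game of Life if and only if $M$ accepts $x$. The idea is the one behind \cref{thm:rateofgrowth}, but with the binary counter inside a self-enforcing \kynnos{} window replaced by a faithful encoding of a space-bounded computation.

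First I would isolate the purely computational part. Let $M'$ be the deterministic machine of space $O(p(m))$ that keeps $x$ on a read-only strip and, on a separate work strip, simulates $M$ on $x$ while maintaining a step counter of capacity $2^{\Theta(p(m))}$; if $M$ reaches an accepting state before the counter overflows, $M'$ resets the work strip, the head, and the counter to their initial values, thereby closing its computation into a single loop, and if instead $M$ rejects or the counter overflows, $M'$ enters a sink. Let $c^\ast$ be the start configuration of $M'$ on input $x$. A short inspection of the transition graph of $M'$, with the input strip pinned to $x$ --- using that the step counter strictly increases except at the unique reset transition --- shows that $c^\ast$ admits arbitrarily long backward walks in the transition function of $M'$ if and only if $M$ accepts $x$ (every cycle must traverse a reset, and after a reset the orbit is the forced orbit of $c^\ast$).

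Next I would realise this inside Game of Life. Using the self-enforcing patch of \kynnos{} and the fact, exploited already in \cref{thm:rateofgrowth}, that \kynnos{} can be stabilised from the inside, I construct $P$ as a $\mathrm{poly}(m)$-size window, bordered by the self-enforcing patch, whose interior rigidly encodes $x$ on a read-only region and the configuration $c^\ast$ of $M'$ on a work region, all stabilised against a surrounding infinite \kynnos{} agar (which is a self-enforcing still life, hence has a unique backward chain). The self-enforcing patch forces the \kynnos{} background around the window in every Game of Life preimage of any configuration containing $P$, so the only freedom in the backward dynamics is the content of the window; there the Life preimages are in exact correspondence with the predecessors of the encoded machine configuration under the transition function of $M'$, the read-only region being forced to stay equal to $x$. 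Since the window is finite the tree of backward chains of $P$ is finitely branching, so by K\"onig's lemma $P$ occurs in the limit set if and only if $c^\ast$ admits arbitrarily long backward walks in $M'$, i.e.\ if and only if $M$ accepts $x$. As $P$ and the whole construction are computable from $(M,x)$ in polynomial time, this is the desired reduction.

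The main obstacle is the middle engineering step: designing the machine gadget inside the \kynnos{} window so that it simultaneously (a) fits in $\mathrm{poly}(m)$ cells, (b) is genuinely anchored against the \emph{nondeterministic} backward dynamics --- the read-only input region must be rigid, and the work region must admit no Game of Life preimages beyond the legitimate $M'$-predecessors --- and (c) is stable inside \kynnos{} exactly as the simple counter of \cref{thm:rateofgrowth} is. In short, the technical heart of the proof is upgrading the counter-in-\kynnos{} construction to a full space-bounded simulation while preserving the ``unique chain of preimages outside the window'' property; granting that gadget, both the reduction and its verification are routine.
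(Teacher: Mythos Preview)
Your reduction idea is sound in spirit, but the ``main obstacle'' you flag at the end is not an engineering detail---it is the whole problem, and the paper's proof avoids it entirely by a simple observation you are missing. You want the Game of Life preimages of the window content to be in exact correspondence with the $M'$-predecessors of the encoded configuration. Intrinsic universality gives you nothing of the sort: it only guarantees that \emph{forward} simulation works, i.e.\ that $g^T(\tau(c)) = \tau(M'(c))$ for some large $T$. It says nothing about $g$-preimages of $\tau(c)$; there could be arbitrarily many garbage preimages that are not of the form $\tau(c')$, and you would also need the $T-1$ intermediate $g$-preimages to be uniquely determined. Making the simulation backward-deterministic inside the window is not a known gadget and is not needed.

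The paper's proof sidesteps this by noting that once the \kynnos{} ring is in place, membership in the limit set becomes a purely \emph{forward} question: since the ring is self-enforcing, it persists in every preimage, so the interior of any infinite backward chain takes values in a finite set and must be eventually periodic; hence the pattern is in $\Omega(g)$ iff its interior lies on a forward-periodic cycle of $g$ (with the ring fixed). With that in hand one only needs a pattern whose forward $g$-orbit is periodic iff $w\in L$, which is routine: simulate (via intrinsic universality with fixed boundary conditions) a machine that loops back to its initial configuration on acceptance and parks in a sink on rejection. No control over backward dynamics inside the window is required at all.
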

  
  The language might well be much harder.
  Even for one-dimensional cellular automata it can be $\Pi^0_1$-complete \cite{CuPaYu89,Hu87}; we do not know if Game of Life reaches this upper bound.

  We also obtain information about the symbolic dynamical nature of the limit set.
  A set of configurations is sofic if it can be defined by Wang tiles, or squares with colored edges: in a valid tiling of $\Z^2$, colors of adjacent edges are required to match, and the tiles can additionally be marked with 0 and 1 to project each valid tiling to a binary configuration. The set of those projections is called a sofic shift. Sofic systems form a large and varied class of subshifts, for example their one-dimensional projections can be essentially arbitrary (subject only to an obvious computability condition) \cite{DuRoSh10,AuSa13}.
  We show that the limit set of Game of Life cannot be defined by a tile set in this way.
  
  \begin{theorem}
  \label{thm:nonsofic}
  The limit set of Game of Life is not sofic.
  \end{theorem}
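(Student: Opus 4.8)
The plan is to use the marching band agar $B$. Write $\bar B$ for the biperiodic configuration it generates, and denote the limit set by $\Omega$; since $B$ is self-enforcing with temporal period $2$, both $\bar B$ and $g(\bar B)$ lie in $\Omega$. The first step is to record the ``marching'' property in quantitative form: there is a constant $c \geq 2$ such that whenever $x \in \Omega$ agrees with $\bar B$ on a south half-plane $\{(i,j) : j \leq r\}$ (in the correct temporal phase), every $g$-preimage of $x$ agrees with $\bar B$ on the strictly larger half-plane $\{(i,j) : j \leq r+c\}$. The crucial point is that $c$ exceeds the speed of light $1$ of the Game of Life rule: passing to a preimage genuinely enlarges the marching-band half-plane rather than merely shifting it. (Nothing forbids this in the backward direction, since $g$ is not injective.)

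From this I would extract a \emph{rigidity} statement: the only configuration in $\Omega$ agreeing with $\bar B$ on some south half-plane is $\bar B$ itself. Given such an $x \in \Omega$ and any $n$, iterating the quantitative marching along an $n$-step chain of predecessors of $x$ (which exists because $x \in \Omega$) yields $x_n$ with $g^n(x_n) = x$ and $x_n$ agreeing with $\bar B$ on rows $\leq r + cn$; since $g^n$ has radius $n$, the image $x = g^n(x_n)$ agrees with $\bar B$ on rows $\leq r + (c-1)n$, and letting $n \to \infty$ (using $c \geq 2$) forces $x = \bar B$. The contrast to keep in mind is that every finite rectangular patch of $B$ does occur in $\Omega$ (inside $\bar B$): marching band is locally admissible everywhere, yet an entire half-plane of it is globally rigid.

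To turn this into non-soficity I would use the elementary fact that if $\Omega$ were sofic, then so would be each of its width-$W$ vertical strip subshifts $\Omega_{[W]}$ (a one-dimensional sliding block image of a one-dimensional SFT), and in particular each would have only finitely many distinct follower sets. Taking $W$ to be the horizontal period of $B$, it then suffices to build, for every $N$, a block $\beta_N$ over this strip that is valid but whose follower set in $\Omega_{[W]}$ depends genuinely on $N$: concretely, $\beta_N$ should present a height-$N$ slab of marching band carrying a fixed ``non-extendable'' defect near its south end, so that the admissible continuations of $\beta_N$ are forced, via the marching, to repay a ``debt'' of length proportional to $N$ before they may deviate from $\bar B$. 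Infinitely many distinct follower sets then contradict soficity. (Alternatively the same information can be packaged as a lower bound on the number of two-dimensional extender sets of growing rectangles, but the strip formulation is cleaner.)

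The hard part will be exactly this last construction: one must choose the defect so that $\beta_N \in \mathcal{L}(\Omega)$ for \emph{all} $N$, and one must show that the follower sets really differ -- that they are not all equal because the continuation lies ``too far away'' to feel the defect. This is where the strict gap $c \geq 2$ over the speed of light does the work, together with the self-enforcement of $\bar B$: these are what propagate the constraint imposed by the defect across an unbounded distance. I expect the verification to run parallel to, and to share most of its lemmas with, the proof that $\Omega$ admits no sublinear gluing radius in the vertical direction.
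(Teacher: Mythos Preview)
Your approach diverges entirely from the paper's, and it has a genuine gap at its foundation. The paper does not use the marching band for non-soficity at all: it uses the \emph{\kynnos{}} ring to enclose a simulated Turing machine that verifies equality of the two halves of its tape. The enclosed pattern is in $\Omega(g)$ iff the halves match, and then the extender-set pigeonhole bound of Kari--Madhusudan for sofic shifts is violated by the $2^{n^2}$ possible half-tapes. No rigidity or half-plane argument is involved.

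The specific gap in your plan is the claim $c \geq 2$. By Lemma~\ref{lem:marching} of the paper, a $48 \times 44$ patch of the marching band forces a patch in its $h$-preimage whose vertical extent is larger by exactly one cell on each side, where $h = g^2$. Translating horizontally, an infinite south half-plane $\{j \leq r\}$ of $\bar B$ forces only $\{j \leq r+1\}$ in each $h$-preimage, while the radius of $h$ is $2$. Your backward gain is thus $1$ per $h$-step against a light cone of $2$, i.e.\ $c = 1/2$ per $g$-step, strictly \emph{sublight}. Your rigidity step then reads: the $n$th $h$-predecessor $x_n$ agrees with $\bar B$ on $\{j \leq r+n\}$, hence $x = h^n(x_n)$ agrees with $\bar B$ on $\{j \leq r + n - 2n\} = \{j \leq r - n\}$, which is \emph{less} than you started with. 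The argument yields nothing, and without it the follower-set construction has no engine. (This sublight growth is exactly why Theorem~\ref{thm:nogluing} only rules out gluing at distance $n/15$ rather than all distances.)

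There is a second structural issue even granting rigidity: the marching lemma requires horizontal room---each application consumes $28$ columns---so projecting $\Omega$ to a width-$W$ strip with $W$ equal to the horizontal period of $B$ destroys the marching property entirely. You would be forced into the two-dimensional extender-set formulation you mention parenthetically, and there the ``defect'' construction (a $\beta_N$ that lies in $\Omega$ for all $N$ yet has $N$-dependent extender set) is precisely the hard part you have not supplied.
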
 
  
  Besides illuminating the iterated images of Game of Life and its limit set, the self-enforcing \kynnos{} patch itself solves a second open problem, namely the Unique father problem stated by John 
  Conway in \cite{Wa72,Co92}: is there a still life (a finite-population configuration that is a fixed point of $g$) whose only predecessor is itself, ``with some fading junk some distance away not being counted''?
  We solve one interpretation of this problem.
  
  \begin{theorem}[Unique father problem]
  \label{thm:SelfEnforcing}
  There exists a finite still life configuration $x$ that contains a finite subpattern $p$ such that every preimage of $x$ also has subpattern $p$.
  \end{theorem}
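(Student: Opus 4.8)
The plan is to deploy the two properties of \kynnos{} advertised in Section~\ref{sec:Protagonists}: that it admits a finite \emph{self-enforcing patch} $p$, and that it \emph{can be stabilized from the inside}. The self-enforcing patch will be the promised subpattern, and the stabilization will turn a finite window of the agar into the promised finite still life.

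First I would isolate the self-enforcing patch property as a lemma. Write $D \subset \Z^2$ for the shape of $p$, so that $g^{-1}(p)$ is a set of patterns of shape $D + [-1,1]^2$. The claim to verify is that every pattern in $g^{-1}(p)$ restricts to $p$ on $D$. This is a statement about finitely many finite patterns, hence decidable; concretely one asks a SAT solver whether there is a preimage of $p$ that disagrees with $p$ on some cell of $D$, and the answer ``unsatisfiable'' certifies the property. Finding such a $p$ inside \kynnos{} is itself a search: a patch of a self-enforcing agar need not be self-enforcing, because local preimage freedom can survive near its boundary, so one enlarges the candidate window until the SAT oracle confirms rigidity. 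Granting the lemma, for any $y \in \{0,1\}^{\Z^2}$ and any $v \in \Z^2$: if $g(y)$ restricted to $D + v$ is the corresponding translate of $p$, then $y$ restricted to $D + v$ is that translate as well.

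Second I would construct the witness. Using that \kynnos{} can be stabilized from the inside, take a rectangular window $W$ of \kynnos{} large enough to contain a copy of $p$ in its interior, set all cells outside $W$ to $0$, and replace a thin band along the boundary of $W$ by a finite stable ``border'' pattern chosen so that the whole configuration $x$ is a finite still life: $x$ has finite population and $g(x) = x$. The existence of such a border is precisely the content of ``stabilizable from the inside'' (and is exactly what fails for \koynnos{}); in the write-up $x$ is exhibited as an explicit finite pattern found by computer, and $g(x) = x$ is checked directly. By construction $x$ contains $p$, since $p$ lies in the unaltered part $W$. Now the theorem is immediate: if $g(y) = x$ then, as $x$ contains $p$, so does $g(y)$, whence by the lemma $y$ contains $p$ at the same place; thus every preimage of $x$ has subpattern $p$.

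I expect the two computer-assisted ingredients to be the real obstacles, the logical glue between them being trivial. Certifying that some patch of \kynnos{} is self-enforcing is a nontrivial but bounded SAT computation. The harder design problem is the stable border: one must build a finite still-life frame that matches the agar along its inner edge, is self-supporting, and decays to the empty background along its outer edge, and ``stabilizable from the inside'' is exactly the hypothesis that makes this possible. I would also note the corollary that $x$ is an \emph{unsynthesizable} still life: no configuration lacking $p$ can evolve to $x$.
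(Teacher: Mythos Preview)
Your proposal is correct and matches the paper's approach: Lemma~\ref{lem:SelfEnforcing} packages exactly the two computer-verified ingredients you describe (a patch $p$ of \kynnos{} with $\hat g([p]) = [p]$, and a finite-support still life $x \in [p]$, both exhibited explicitly in Figure~\ref{fig:SelfEnforcing}), and the theorem follows immediately as you say. Two small corrections: the paper finds $p$ not by \emph{enlarging} a window until it becomes rigid but via Algorithm~\ref{alg:Patch}, which starts from a large locally-fixed patch and iteratively \emph{shrinks} it to its maximal self-enforcing subpattern; and in the paper's terminology what you invoke is stabilization \emph{from the outside} (a finite patch of the agar bordered and then surrounded by zeros), whereas ``from the inside'' refers to finitely perturbing the infinite agar, which is the property used later for the \kynnos{} rings.
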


  One can also imagine stronger variants of the Unique father problem: for example, we could require $p$ to contain all live cells of $x$, or all cells in their convex hull. These stay open.

  Theorem~\ref{thm:SelfEnforcing} also tells us something about the dynamics of Game of Life \emph{restricted to} its limit set, i.e.\ its limit dynamics.
  The chain-wandering property essentially means that there is a finite pattern that occurs in the limit set of Game of Life, but never returns to itself under the dynamics no matter how we fill the surrounding infinite plane.
  In fact, we are even allowed to completely rewrite the entire configuration on every step, apart from the domain of the pattern.
 
  \begin{theorem}
    \label{thm:NotChainTrans}
 Game of Life is chain-wandering 
 on its limit set. 
 \end{theorem}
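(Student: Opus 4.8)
The plan is to use the marching band agar, which I denote $M$; since $M$ has temporal period two, $g$ exchanges its two phases, and I carry the phase along silently below. Its defining feature is the \emph{marching property}: a south half-plane of $M$ recedes by exactly one row under $g$ the instant there is a difference on its border, and dually (the form used below) in the inverse dynamics such a half-plane marches \emph{north} by one row per step. Being one of the self-enforcing period-two agars, $M$ also satisfies that the only preimage of the all-$M$ plane is the all-$M$ plane of the other phase.

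First I would exhibit a witness in the limit set $\Omega$. Perturb the all-$M$ plane at a single cell placed at height $k$ and iterate $g$ for $k$ steps: the marching property drives the top of the surviving $M$ half-plane down to a fixed row $0$ while the perturbation reaches a fixed cell $(0,1)$ just above it, and the $k$ intermediate configurations are preimages, so the result lies in $g^{k}(\{0,1\}^{\Z^{2}})$. Letting $k\to\infty$ and using compactness together with the nesting of the images $g^{k}(\{0,1\}^{\Z^{2}})$, a subsequential limit gives $z\in\Omega$ that agrees with $M$ on $\{(i,j):j\le 0\}$ and differs from $M$ at $(0,1)$. Take $D=[-R,R]^{2}$ with $R$ large enough to contain several horizontal periods of $M$, the line $j=0$ and the cell $(0,1)$, and set $p=z|_{D}$. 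Then $p$ occurs in $\Omega$, and it certifies both that its host configuration looks like the $M$-half-plane reaching row $0$ and that its border carries a genuine difference at $(0,1)$.

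Now suppose for a contradiction there is a chain $z_{0},\dots,z_{n}$ in $\Omega$ with $n\ge 1$, $z_{0}|_{D}=z_{n}|_{D}=p$ and $g(z_{i})|_{D}=z_{i+1}|_{D}$ for all $i$; that such a chain exists is precisely what it would mean for $p$ to return to itself, even allowing the plane outside $D$ to be rewritten on every step. Reading the chain backwards from step $n$: since $z_{n}|_{D}=p$ displays the $M$-half-plane at row $0$ with a border difference, backward marching forces $z_{n-1}$ to display the $M$-half-plane at row $1$ with a difference at row $2$, and inductively $z_{n-k}$ to display it at row $k$ with a difference at row $k+1$. By finite propagation speed this reading only constrains $z_{n-k}$ on the box $[-R-k,R+k]^{2}$, which always reaches row $k+1$, so the induction runs for every $k\le n$; at $k=n$ it says that $z_{0}=z$ must display the $M$-half-plane at row $n$, hence is $M$ at every cell of rows $\le n$ and in particular at $(0,1)$ (as $1\le n$), contradicting the choice of $z$. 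So $p$ is chain-wandering and $g$ is chain-wandering on $\Omega$.

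\textbf{Main obstacle.} The delicate point is the backward step through the non-injective map $g$: a finite pattern on $D$ only exhibits a finite block of $M$, and I need this block, together with the certified border difference, to \emph{force} the structure ``$M$-half-plane one row higher'' in every $\Omega$-preimage, rather than merely be locally consistent with it. This is a rigidity statement about the ways the marching band can sit inside the limit set, and I would prove it in the spirit of the self-enforcing-patch arguments behind Theorems~\ref{thm:polytime}--\ref{thm:SelfEnforcing}, exploiting that the $M$-agar has a unique chain of preimages; choosing $D$ wide and tall enough to trigger this is the only quantitatively nontrivial ingredient, and the two temporal phases of $M$ require routine bookkeeping.
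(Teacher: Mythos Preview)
Your approach has a genuine gap. The backward induction you describe can only be iterated a bounded number of times, bounded by the width of the fixed window $D$, whereas chains can have arbitrary length. Concretely, the marching property (Lemma~\ref{lem:marching}) says that a $48\times 44$ patch of the marching band forces a $20\times 46$ patch in its $h$-preimage: the forced region extends vertically by one row on each side but \emph{shrinks horizontally} by $28$ cells. Starting from $z_n|_D = p$ with $D=[-R,R]^2$, after $k$ backward steps the region on which you can force the marching band structure in $z_{n-k}$ has horizontal width roughly $2R-28k$. Once $k\gtrsim R/14$ this region is empty and the induction stops; for chains of length $n$ larger than this you derive no constraint on $z_0$ at the cell $(0,1)$, and there is no contradiction. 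Your sentence ``By finite propagation speed this reading only constrains $z_{n-k}$ on the box $[-R-k,R+k]^{2}$, which always reaches row $k+1$'' conflates the domain on which the chain condition gives \emph{some} relation (which indeed grows) with the domain on which the marching forcing actually pins down values (which shrinks).

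The paper's proof avoids this entirely by using the \kynnos{} agar instead. Lemma~\ref{lem:SelfEnforcing} produces a finite patch $p$ with $\hat g([p])=[p]$: any $g$-preimage of $p$ already contains $p$, with no horizontal loss. One then takes $y\in\Omega(g)$ to be the still life containing $p$ together with a glider about to destroy it (the glider can be shot in from infinity, putting $y$ in the limit set). In any $\epsilon$-chain from $y$ with $\epsilon$ small enough that the window covers $\dom(p)+[-1,1]^2$, the patch is destroyed in the first few steps; and if it ever reappeared at some first step $j+1$, then $g(z_j)|_{\dom p}=p$ forces $z_j|_{\dom p}=p$ by self-enforcement, contradicting minimality of $j+1$. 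This argument is insensitive to the chain length because the self-enforcing patch does not shrink under $\hat g$. The marching band is used in the paper only for the non-gluing result (Theorem~\ref{thm:nogluing}), precisely because there one controls the relationship between pattern size and backward depth.
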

 
 Much is known about the kinds of things that can happen in Game of Life orbits, in particular it is well known that Game of Life is computationally universal and can simulate any cellular automaton. Nevertheless, to our knowledge all existing methods of simulating unbounded computation require the rest of the configuration to be empty (or at least stay out of the way).
 With our methods, we can enforce computations in a finite region (conditioned on its end state) even when it is completed into an infinite configuration by an adversary.
 
 \begin{theorem}
  \label{thm:pspacehardreach}
  The reachability problem of Game of Life is PSPACE-hard, i.e.\ given two finite patterns $p, q \in \{0,1\}^D$ whose domain $D$ has polynomial extent, it is PSPACE-hard to tell whether there exists a configuration $x$ with $x|D = p$ and $g^n(x)|D = q$ for some $n \geq 0$.
  \end{theorem}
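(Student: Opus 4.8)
The plan is to reduce from a canonical PSPACE-complete problem, say whether a polynomial-space Turing machine $M$ accepts an input $w$ (equivalently, acceptance by a linear bounded automaton, or TQBF). Since Game of Life is intrinsically universal \cite{DuRo99}, it simulates a fixed cellular automaton that runs $M$ on $w$ inside a rectangular region whose side length is polynomial in $|w|$ (as $M$ uses only polynomial space), with one step of $M$ carried out in a bounded number of steps of $g$, and which collapses to a fixed ``ACCEPTED'' still-life pattern once $M$ accepts. The idea is to place such a simulation inside a polynomial-size hole punched into the \kynnos{} agar, let the surrounding \kynnos{} act as a protective firewall, and exploit the end-state condition of the reachability problem to force the firewall to persist for all intermediate times.

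Concretely, let $\pi$ be the self-enforcing patch of Theorem~\ref{thm:SelfEnforcing} (a patch of \kynnos{} that occurs in $g(y)$ only if it was already present in $y$ at the same location), and build a closed rectangular ``ring wall'' $W$ around the computation region by tiling with overlapping copies of $\pi$; as a union of self-enforcing patches $W$ is itself self-enforcing, and if taken sufficiently thick it shields its interior, so the interior evolves as Game of Life on a finite rectangle with $W$ as a fixed boundary condition. Let $D$ be a polynomial-size window containing $W$, its interior, and a margin of clean \kynnos{}; let $p \in \{0,1\}^D$ consist of $W$, the margin, and the initial simulation of $M$ on $w$ in the interior, and let $q \in \{0,1\}^D$ agree with $p$ on $W$ and the margin but show the ``ACCEPTED'' still life in the interior. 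The claimed equivalence is: $M$ accepts $w$ iff there is an $x \in \{0,1\}^{\Z^2}$ with $x|D = p$ and $g^n(x)|D = q$ for some $n \geq 0$.

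For the forward direction, if $M$ accepts $w$, take $x$ to agree with $p$ on $D$ and with the infinite \kynnos{} agar elsewhere; since \kynnos{} can be stabilized from the inside and the simulation stays within its hole, the configuration outside a bounded region stays \kynnos{} forever, the interior runs the deterministic simulation, and at the step $n$ when it reaches ``ACCEPTED'' we get $g^n(x)|D = q$. For the converse, suppose $x|D = p$ and $g^n(x)|D = q$. Since $q$ contains the self-enforcing wall $W$, so does $g^n(x)$; applying the self-enforcing property repeatedly shows $g^k(x)$ contains $W$ for every $0 \leq k \leq n$. Hence at every step up to $n$ the wall is intact and shields the interior, so the interior of $g^k(x)$ evolves as the isolated finite-rectangle Game of Life started from $p$'s interior, i.e.\ as the deterministic simulation of $M$ on $w$; therefore if $g^n(x)|D = q$ then the simulation reaches ``ACCEPTED'' and $M$ accepts $w$. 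It is precisely the conditioning on the end state $q$ that prevents an adversarial choice of $x$ outside $D$ from tampering with $W$ before time $n$: any $x$ destroying the wall at some $k < n$ could not have $W$ present in $g^n(x)$.

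The main obstacle is the Game of Life engineering: realizing a universal bounded-space computation next to the rigid \kynnos{} boundary, with a clean interface turning the wall into a genuine fixed boundary condition for the simulation, and checking that a ring assembled from copies of the single patch $\pi$ is self-enforcing and thick enough to shield. The conceptual skeleton above is short, but this pattern-level construction (of the kind the paper locates with SAT solvers) is where the real work lies; the rest is a routine unwinding of the self-enforcing property and of intrinsic universality.
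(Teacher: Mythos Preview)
Your proposal is correct and follows essentially the same route as the paper: surround a bounded-space Turing machine simulation (obtained via intrinsic universality with fixed boundary conditions) by a self-enforcing \kynnos{} ring, set $p$ and $q$ to be the initial and accepting snapshots respectively, and use the self-enforcing property of the ring present in $q$ to force the wall to persist at every intermediate step, thereby isolating the interior computation from any adversarial completion of $x$. The paper dispenses with your closing engineering worries by citing the Durand--R\'oka simulation, which already provides fixed-boundary simulation of arbitrary CA, and it builds the ring exactly as you suggest, from overlapping rotated copies of the self-enforcing patch.
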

  
  Finally, the properties of the marching band's backwards dynamics imply that the limit set contains patterns that cannot be ``glued'' together too close: there are two $n \times n$ patterns such that no configuration of the limit set contains both of them separated by a distance less than $n/15$.

 \begin{theorem}
 \label{thm:nogluing}
  For all large enough $n$ there exist patterns $p, q \in \{0,1\}^{[0,n-1]^2}$ such that $p$ and $q$ appear in the limit set, but $p \sqcup \sigma_{(0, \lfloor n/15 \rfloor)}(q)$ does not.
 \end{theorem}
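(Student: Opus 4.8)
The plan is to extract both $p$ and $q$ from the marching band $M$ of Section~\ref{sec:Protagonists} (Figure~\ref{fig:marssiorkesteri}). Since $M$ has temporal period $2$, it is $g^2$-periodic, hence lies in the limit set, so every finite patch of $M$ — and, as I will use, of certain half-plane configurations built from $M$ — lies in the language of the limit set.

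I would take $p$ to be an $n\times n$ patch of a configuration that agrees with $M$ on a south half-plane and carries the marching front near the top of the window, with the perturbed region (the ``junk'') just above it; and $q$ an $n\times n$ patch of a similarly built configuration, chosen in a phase that agrees with $p$ on the large overlap the offset $\lfloor n/15\rfloor$ leaves. Both $p$ and $q$ are in the language of the limit set: each is cut from a half-plane configuration which, by the marching property, admits an infinite chain of $g^2$-preimages — at every step the front is pushed a bounded number of rows further north, and the finite region above the front can be filled in freely — so that configuration is itself in the limit set. The individual windows are deliberately too small to reveal the \emph{phase} of $M$ on the far side of the junk, which is exactly why each of $p$ and $q$ extends to a genuine limit-set configuration.

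In $P = p \sqcup \sigma_{(0,\lfloor n/15\rfloor)}(q)$ the two patches are positioned so that the combined height-$(n+\lfloor n/15\rfloor)$ window displays, from bottom to top, a tall slab of $M$ in one phase, then the front, then a \emph{thin} band of junk of thickness $\Theta(\lfloor n/15\rfloor)$, then a second slab of $M$ whose phase is \emph{incompatible} with the first, this incompatibility being forced by the relative positions of the two fronts even though neither $p$ nor $q$ alone reveals it. I would then show $P\notin L$ by contradiction: given a limit-set configuration containing $P$, take an infinite chain of $g^2$-preimages. By the marching property the lower front climbs north, consuming the junk band from below, at a definite speed at least $1/\kappa$ rows per $g$-step for the constant $\kappa$ read off from the periods of $M$, whereas the bottom edge of the upper $M$-slab does \emph{not} retreat (the marching property is one-sided — it is south half-planes that must shrink), so the junk band is eaten from below and pinned from above. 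Choosing $\kappa<15$ — this is the source of the constant $15$ in the statement — the band is exhausted after fewer than $\lfloor n/15\rfloor\cdot\kappa$ backward steps, at which point the two $M$-slabs of incompatible phase would become adjacent, which is impossible. Hence no preimage chain survives and $P$ is not in the language of the limit set.

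The main obstacle is the quantitative and combinatorial bookkeeping: pinning down the exact northward marching speed of the front (hence the admissible $\kappa$, and in particular whether it can be pushed below $15$); verifying that the relative placement of the two fronts in $P$ genuinely forces incompatible $M$-phases for \emph{every} completion of $P$, not merely the intended one; and checking that the trapped junk band can be made of thickness only $\Theta(\lfloor n/15\rfloor)$ while keeping $p$ and $q$ themselves in the language of the limit set. This is a more delicate version of the front-tracking analysis already underlying Theorems~\ref{thm:rateofgrowth} and~\ref{thm:nonsofic}, with the marching band in the role those arguments assign to the self-enforcing agars.
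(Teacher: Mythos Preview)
Your proposal is in the right spirit but takes a detour that introduces a genuine gap, and it misses the clean mechanism the paper actually uses.

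In the paper, both patterns are \emph{pure patches of the marching band agar} --- no fronts, no junk. They are therefore trivially in the limit set, since the agar is $h$-periodic for $h = g^2$. The engine is Lemma~\ref{lem:marching}: for any $x$ in the spatial orbit of the marching band, $\hat h(x|[0,47]\times[0,43]) \geq x|[10,29]\times[-1,44]$, i.e.\ one application of $\hat h$ to a $48\times 44$ agar patch forces a patch that is \emph{taller} than the original by one row on each side (while narrower horizontally). Iterating $n$ times turns a wide-short patch $P = x|[-10n,18n+47]\times[0,43]$ into a forced region of vertical extent $[-n,n+43]$. Take $Q$ to be a patch of the same shape, shifted up by $2n$ and taken at a different spatial phase of the agar; then $\hat h^n(P)$ and $\hat h^n(Q)$ have overlapping domains but force conflicting bits, so $\hat h^n(P\sqcup Q) = \top$ and $P\sqcup Q$ is not in the limit set. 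The constant $15$ falls out of the arithmetic: $P$ and $Q$ each fit in a $29n\times 29n$ square, and the vertical offset is $2n \approx (29n)/15$.

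Your version puts the ``junk'' \emph{inside} the patterns $p,q$ and then has to argue that such patches are in the limit set. That is the gap: you invoke an infinite preimage chain for a half-plane configuration with arbitrary junk above the front, but this is exactly what would need proving, and it is not obvious (nor needed). Your ``pinning from above'' claim is also off: Lemma~\ref{lem:marching} is vertically symmetric (the forced region grows by one row on \emph{both} sides), so the upper slab marches south just as the lower marches north --- helpful for you, but contrary to your stated reasoning. Finally, the analogy with Theorems~\ref{thm:rateofgrowth} and~\ref{thm:nonsofic} is misleading: those results rely on the \kynnos{} ring's confinement, a completely different mechanism from the marching band's outward forcing.
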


 \begin{corollary}
 The limit set of Game of Life is not block-gluing (thus has none of the gluing properties listed in \cite{BoPaSc10}).
 \end{corollary}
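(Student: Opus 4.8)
The plan is to derive the corollary from Theorem~\ref{thm:nogluing} by unwinding the definition of block-gluing. Recall that a subshift $X \subseteq \{0,1\}^{\Z^2}$ is \emph{block-gluing} when there is a single constant $N \in \N$ -- a uniform ``gluing radius'', independent of the sizes of the patterns involved -- such that any two box-shaped patterns occurring in $X$ can be realized simultaneously in one configuration of $X$ as soon as the placement separates them by at least $N$ (and is, where they overlap, compatible). Each of the gluing properties listed in~\cite{BoPaSc10} likewise demands such a uniform radius and is at least as strong as block-gluing, so it suffices to refute block-gluing for the limit set $X$ of $g$.

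First I would fix an arbitrary candidate radius $N$ and pick $n$ large enough that Theorem~\ref{thm:nogluing} applies and $\lfloor n/15\rfloor \geq N$. The theorem produces $p, q \in \{0,1\}^{[0,n-1]^2}$ that both occur in $X$, yet $p \sqcup \sigma_{(0,\lfloor n/15\rfloor)}(q)$ does not. By shift-invariance $\sigma_{(0,\lfloor n/15\rfloor)}(q)$ also occurs in $X$, and it is placed at distance $\lfloor n/15\rfloor \geq N$ from $p$, compatibly with $p$ (otherwise the non-occurrence would be vacuous); hence block-gluing with radius $N$ would force $p \sqcup \sigma_{(0,\lfloor n/15\rfloor)}(q)$ into the language of $X$, a contradiction. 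As $N$ was arbitrary, no uniform gluing radius exists, so $X$ is not block-gluing, and therefore fails every gluing property listed in~\cite{BoPaSc10}, each of which implies it. The same estimate shows more: the gluing radius needed in the vertical direction is at least $\lfloor n/15\rfloor$ for $n \times n$ patterns, so it grows at least linearly in the side length and is not even sublinear, matching the claim in the introduction.

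I do not expect a genuine obstacle in this last step: essentially all of the content is already packed into Theorem~\ref{thm:nogluing} (and, upstream of it, into the backwards-marching behaviour of the marching band agar, which is what produces a separation growing linearly with the pattern size), and the corollary is just the observation that a divergent sequence of required separations precludes a uniform gluing radius. The only thing demanding mild care is to phrase the argument against whichever precise formalization of block-gluing is in force -- disjoint boxes versus compatible overlapping boxes, and exactly how the separation is measured -- but since $\lfloor n/15\rfloor \to \infty$ the conclusion is insensitive to that choice.
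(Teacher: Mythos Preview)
Your proposal is correct and matches the paper's approach: the paper states the corollary without proof, treating it as an immediate consequence of Theorem~\ref{thm:nogluing}, and your argument is precisely the expected unwinding---a uniform gluing constant $N$ is incompatible with the sequence of pattern pairs whose required separation grows like $n/15$. Your closing caveat about the exact formalization of the separation is well placed, since the paper's own convention for $\sigma$ makes the literal domains overlap rather than sit at gap $\lfloor n/15\rfloor$; but as you note (and as the proof of Theorem~\ref{thm:nogluing} confirms, where the underlying patches $P,Q$ have genuinely disjoint domains with gap $\sim 2n$ inside $29n\times 29n$ boxes), the conclusion is insensitive to this.
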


\subsection{Programs}

Some of the programs we used can be found on GitHub at~\cite{Program}. We have included Python scripts enumerating self-enforcing agars, and scripts checking the claimed properties of our three agars. In particular one can find implementations of Algorithms~\ref{alg:Hat} and~\ref{alg:Patch}. The scripts use the PySAT~\cite{Pysat017dev15} library to call the Minisat~\cite{Minisat22} SAT solver (the library supports many other solvers as well).

\section{Definitions}
\label{sec:def}

Our intervals are discrete.
To simplify formulas, we denote by
\[
  \annu{a}{b}{c}{d}{e}{f}{g}{h} = ([-a,b]\times[-c,d]) \setminus ([-e,f]\times[-g,h])
\]
a rectangular discrete annulus when the second rectangle fits fully inside the first, that is, $-a \leq -e \leq f \leq b$ and $-c \leq -g \leq h \leq d$.

We assume some familiarity with topological and symbolic dynamics and give only brief definitions, see e.g.\ \cite{LiMa95} for a basic reference. We denote by $S$ a finite \emph{alphabet}. A \emph{configuration} or \emph{point} is an element of $S^{\Z^d}$. More generally, a \emph{pattern} (or sometimes \emph{patch} in more informal contexts) is a function $p : \dom(p) \to S$, where $\dom(p) \subset \Z^d$ is the \emph{domain} of $p$. If $S \subset \N$, then by $\sum p$ we denote the sum $\sum_{\vec v \in \dom(p)} p(\vec v)$. For $\vec v \in \Z^d$, a pattern $p$ and $D \subset \Z^d$, 
we write $q = p|D$ for the restriction $\dom(q) = D \cap \dom(p), q(\vec v) = p(\vec v)$. A pattern is \emph{finite} if its domain is, and a configuration is \emph{finite} if its sum as a pattern is finite. If $p, q$ are patterns with disjoint domains, define $p \sqcup q = r$ by $\dom(r) = \dom(p) \cup \dom(q), r|\dom(p) = p, r|\dom(q) = q$. The \emph{extent} of a pattern is the minimal hypercube containing the origin and its domain. For two patterns, write $\mathrm{eq}(q, q')$ for the set of vectors $\vec v \in \dom(q) \cap \dom(q')$ such that $q(\vec v) = q'(\vec v)$, and $\diff(q,q')$ for those that satisfy $q(\vec v) \neq q'(\vec v)$. For computer science purposes, we note that patterns with polynomial extent have an efficient encoding as binary strings.

The \emph{full shift} is the set of all configurations $S^{\Z^d}$ with the product topology (where $S$ has the discrete topology), under the action of $\Z^d$ by homeomorphisms $\sigma_{\vec v}(x)_{\vec u} = x_{\vec v + \vec u}$ called \emph{shifts}. We use the same formula to define $\sigma_{\vec v}(p)$ for patterns $p$ (of course shifting the domain correspondingly). A pattern $p$ defines a \emph{cylinder} $[p] = \{x \in S^{\Z^d} \;|\; x|\dom(p) = p\}$. Cylinders defined by finite patterns are a base of the topology, and their finite unions are exactly the clopen sets. The \emph{symbol partition} is the clopen partition $\{[s] \;|\; s \in S\}$ where $s$ is identified with the pattern $p : \{\vec 0\} \to S$ with $p(\vec 0) = s$. The space $S^{\Z^d}$ is homeomorphic to the Cantor space, and is metrizable. One possible metric is $\dist : (S^{\Z^2})^2 \to \mathbb{R}$, $\dist(x, y) = 2^{-\sup\{n \;|\; x|[-n,n]\times[-n,n] = y|[-n,n]\times[-n,n]\}}$ with $2^{-\infty} = 0$.

A \emph{cellular automaton} (or \emph{CA}) is a continuous self-map $f : S^{\Z^d} \to S^{\Z^d}$ that commutes with the shifts. The \emph{neighborhood} is a set $N \subset \Z^d$ such that $f(x)_{\vec 0}$ is determined by $x|N$; a finite neighborhood always exists by the Curtis-Hedlund-Lyndon theorem~\cite{He69}. It is easy to show that there is always a unique \emph{minimal neighborhood} under inclusion. A state $0 \in S$ is \emph{quiescent} if $f(0^{\Z^d}) = 0^{\Z^d}$. A \emph{subshift} is a closed subset $X$ of $S^{\Z^d}$ invariant under shifts. Its \emph{language} is the set of finite patterns $p$ such that $[p] \cap X \neq \emptyset$, and we say these patterns \emph{appear} or \emph{occur} in the subshift. Patterns that do not appear in $f(S^{\Z^d})$ are usually called \emph{orphans}. We say $p$ is a \emph{level-$n$ orphan} if it appears in $f^{n-1}(S^{\Z^d})$ but not in $f^n(S^{\Z^d})$ (so the usual orphans are level-$1$). The \emph{limit set} of a cellular automaton $f$ is $\Omega(f) = \bigcap_n f^n(S^{\Z^d})$. It is a subshift invariant under $f$. A \emph{subshift of finite type} is a subshift of the form $\bigcap \sigma_{\vec v}(C)$ where $C$ is clopen. A \emph{sofic shift} is a subshift which is the image of a subshift of finite type under a shift-commuting continuous function.

We are mainly interested in $d = 2$, $S = \{0,1\}$, and the \emph{Game of Life} cellular automaton $g : \{0,1\}^{\Z^2} \to \{0,1\}^{\Z^2}$ defined by
\begin{align*}
g(x)_{\vec v} = 1 \iff (&x_{\vec v} = 0 \wedge \sum(x|\vec v+K) = 3) \\
\vee (&x_{\vec v} = 1 \wedge \sum(x|\vec v+K) \in \{2,3\}),
\end{align*}
where $K = [-1,1]^2 \setminus \{(0,0)\}$.

A \emph{fixed point} (of a CA $f$) is $x \in S^{\Z^d}$ such that $f(x) = x$. In the context of Game of Life these are also called \emph{stable configurations} or \emph{still lifes}. \emph{Spatial} and \emph{temporal} generally refer respectively to the $\Z^d$-action of shifts and the action of a CA. In particular a \emph{spatially periodic point} is a configuration $x \in S^{\Z^d}$ which has a finite orbit under the shift dynamics, and \emph{temporal periodicity} means $f^n(x) = x$ for some $n \geq 1$. Spatiotemporal periodicity means that both hold; in the Game of Life context spatiotemporal points are also called \emph{agars}.

If $f : X \to X$ is a continuous function, an \emph{$\epsilon$-chain from $x$ to $y$} is $x = x_0, x_1, \ldots, x_k = y$ with $k \geq 1$ such that $\dist(f(x_i), x_{i+1}) < \epsilon$ for $0 \leq i < k$. We say $f$ is \emph{chain-nonwandering} if for all $\epsilon > 0$ and $x \in X$ there is an $\epsilon$-chain from $x$ to itself; otherwise $f$ is \emph{chain-wandering}. (In the literature, chain-nonwandering is more commonly known as chain-recurrence, but both terms are logical.) We say $f$ is \emph{topologically transitive} if for all nonempty open sets $U,V$ we have $f^n(U) \cap V \neq \emptyset$ for some $n$. It is \emph{sensitive} (to initial conditions) if there exists $\epsilon > 0$ such that for all $x \in X$ and $\delta > 0$ there exists $y \in X$ with $\dist(x,y) < \delta$ and $n \in \N$ such that $\dist(f^n(x), f^n(y)) \geq \epsilon$. We say $f$ has \emph{dense periodic points} if its set of temporally periodic points is dense.

\section{Proofs}

We begin by introducing a formalism for forced cells in the preimages of a given pattern or configuration.
The general topological idea is the following: if we have a zero-dimensional space $X$ and a family of closed sets $\II$ which is closed under arbitrary intersections and contains the empty set, then to any continuous $f : X \to X$ we can associate a map $\hat{f} : \II \to \II$ by
\begin{equation}
  \label{eq:generalhat}
  \hat f(A) = \bigcap \{B \in \II \;|\; f^{-1}(A) \subset B\}.
\end{equation}
We call this the \emph{dual map} of $f$ with respect to $\II$.

In our situation, $X = S^{\Z^d}$ and $f : S^{\Z^d} \to S^{\Z^d}$ is a cellular automaton. We define three families of subsets of $S^{\Z^d}$:
\begin{itemize}
\item $\II$ consists of the cylinders $[p] \subset S^{\Z^d}$ defined by all patterns $p$, plus the empty set $\emptyset$, which we denote by $\top$. The entire space $S^{\Z^d}$, which is the cylinder of the empty pattern, is denoted by $\bot$.
\item $\FF \subset \II$ consists of all cylinders $[p]$ defined by finite patterns $p$.
\item $\CC \subset \II$ consists of the singletons $[x] = \{x\}$ for full configurations $x \in S^{\Z^d}$.
\end{itemize}
Note that $\FF \cap \CC = \emptyset$. The family $\FF$ is naturally stratified into finite subsets $\FF_M = \{ [p] \mid p \in S^M \}$, where $M \subset \Z^d$ ranges over finite sets.
For a cylinder $[p] \in \II$, equation~\eqref{eq:generalhat} defines $\hat f([p]) \in \II$ as the cylinder $[q]$, where $q$ contains exactly those cells whose values are the same in all $f$-preimages of $p$, or $\top$ if $p$ has no $f$-preimages (i.e.\ is an orphan).
Also, $\hat f(\top) = \top$.

\begin{example}
  Consider $S = \{0,1,2\}$ and the cellular automaton $f : S^\Z \to S^\Z$ defined by
  \[
    f(x)_0 =
    \begin{cases}
      2, & \text{if~} x_0 = 2, \\
      \min(x_0, x_1), & \text{otherwise.}
    \end{cases}
  \]
  The minimal neighborhood of $f$ is $N = \{0,1\}$.
  The pattern $p = 0 0 2$ of domain $\{0,1,2\}$ has preimages $f^{-1}(p) = \{ 0 0 2 0, 0 0 2 1, 0 0 2 2, 1 0 2 0, 1 0 2 1, 1 0 2 2 \}$ of domain $\{0,1,2,3\}$.
  Thus $\hat f([p]) = [q]$, where $q = 0 2$ has domain $\{1,2\}$, since the values of these cells are the same in all preimages.
  The pattern $p' = 1 0 2$ has no preimages, so $\hat f([p']) = \top$.
\end{example}

We define a partial order on $\II$ by $[p] \leq [q]$ whenever $[q] \subset [p]$, and $\alpha \leq \top$ for all $\alpha \in \II$.
The intuition is that $[p] \leq [q]$ corresponds to the pattern $q$ specifying more cells than $p$, and thus containing more information.
As the empty set $\top$ in a sense specifies the maximal amount of information -- a contradiction -- it is the largest element.
Note that $\CC$ consists of the maximal elements of $\II \setminus \{\top\}$.

We give $\II$ the topology with basis sets $U_p = \{\alpha \in \II \;|\; [p] \leq \alpha\}$ for $[p] \in \FF$ as well as $\{\top\}$, making $\top$ an isolated point. This space is not Hausdorff ($T_2$), indeed it only satisfies the Kolmogorov ($T_0$) separation axiom. The induced topology on $\CC$ is the standard compact Cantor topology, and $\FF$ is a dense subset of $\II \setminus \{\top\}$. Every nonempty open set contains $\top$: ``the contradiction is dense''.

\begin{lemma}
  \label{lem:continuous}
  The dual map $\hat f : \II \to \II$ is continuous.
\end{lemma}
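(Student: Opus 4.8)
The plan is to verify continuity by checking preimages of basic open sets, since the sets $U_p = \{\alpha \in \II \mid [p] \leq \alpha\}$ for finite patterns $p$, together with $\{\top\}$, form a basis of $\II$. So I would show that $\hat f^{-1}(U_p)$ is open for every finite pattern $p$, and that $\hat f^{-1}(\{\top\})$ is open.

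The first thing I would do is rewrite these preimages purely in terms of $f$. Unwinding \eqref{eq:generalhat} and the order on $\II$, one finds, for every $\alpha \in \II$, that $[p] \leq \hat f(\alpha)$ holds exactly when $f^{-1}(\alpha) \subseteq [p]$ (this is vacuous when $\alpha$ is an orphan, matching $\hat f(\alpha) = \top \geq [p]$, and it also covers $\alpha = \top$), and that $\hat f(\alpha) = \top$ holds exactly when $f^{-1}(\alpha) = \emptyset$. Hence $\hat f^{-1}(U_p) = \{\alpha \in \II \mid f^{-1}(\alpha) \subseteq [p]\}$ and $\hat f^{-1}(\{\top\}) = \{\alpha \in \II \mid f^{-1}(\alpha) = \emptyset\}$, both of the form $V_W := \{\alpha \in \II \mid f^{-1}(\alpha) \subseteq W\}$ for a clopen set $W \subseteq S^{\Z^d}$ (take $W = [p]$, respectively $W = \emptyset$). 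So it suffices to show $V_W$ is open for every clopen $W$.

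The heart of the matter is a compactness argument reducing infinite patterns to finite windows. Let $\alpha = [a] \in V_W$, so $f^{-1}([a]) \subseteq W$, where $a$ may be an infinite pattern. Since $[a] = \bigcap\{[b] \mid b \text{ a finite subpattern of } a\}$, we have $f^{-1}([a]) = \bigcap_b f^{-1}([b])$, and each $f^{-1}([b])$ is clopen because $f$ is continuous and $[b]$ is clopen. The closed sets $f^{-1}([b])$ together with $S^{\Z^d} \setminus W$ have empty total intersection in the compact space $S^{\Z^d}$, so some finite subfamily does; adjoining $S^{\Z^d} \setminus W$ and extra terms if needed, we may take this subfamily to be $f^{-1}([b_1]), \dots, f^{-1}([b_k]), S^{\Z^d}\setminus W$. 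Then $b := b_1 \sqcup \dots \sqcup b_k$ is a well-defined finite subpattern of $a$ (the $b_i$ are mutually consistent restrictions of $a$) and $f^{-1}([b]) = \bigcap_i f^{-1}([b_i]) \subseteq W$. Now $U_b$ is a basic open neighborhood of $[a]$, and every $\beta \in U_b$ is either $\top$, with $f^{-1}(\top) = \emptyset \subseteq W$, or a cylinder $[c] \subseteq [b]$, with $f^{-1}([c]) \subseteq f^{-1}([b]) \subseteq W$; in both cases $\beta \in V_W$. So $U_b \subseteq V_W$. Since $\top$ is an isolated point lying in $V_W$, this shows $V_W$ is open, and applying it with $W = [p]$ and $W = \emptyset$ finishes the proof.

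I expect the only real obstacle to be getting this compactness step right, and in particular handling the non-Hausdorff top point $\top$ consistently: one must keep in mind that the orphan patterns and $\top$ itself are exactly the elements mapped to $\top$ when computing $\hat f^{-1}(U_p)$, and that the reduction to a finite subpattern $b$ genuinely relies on compactness of $S^{\Z^d}$ when $a$ is infinite (for finite $a$ there is nothing to prove).
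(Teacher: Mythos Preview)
Your proof is correct and rests on the same compactness argument as the paper's: an infinite pattern forces a cell (or is an orphan) only if some finite subpattern already does. The packaging differs slightly---you verify that preimages $\hat f^{-1}(U_p)$ and $\hat f^{-1}(\{\top\})$ are open via the unified description $V_W = \{\alpha \mid f^{-1}(\alpha) \subseteq W\}$, whereas the paper checks continuity pointwise and splits into the cases $\hat f([p]) = [q]$ and $\hat f([p]) = \top$---but the substance is identical. One small notational point: the paper's $\sqcup$ is defined only for patterns with disjoint domains, so when you combine the $b_i$ you should speak of the common extension (the restriction of $a$ to $\bigcup_i \dom(b_i)$) rather than literally $b_1 \sqcup \cdots \sqcup b_k$.
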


We are simply saying that if a (possibly infinite) pattern forces some particular value in some cell in the preimage, then actually some finite patch already forces it. 
The proof is a straightforward compactness argument.

\begin{proof}
  Continuity at $\top$ is obvious.
  We show continuity at a cylinder $[p] \in \II$.
  Suppose first that $\hat f([p]) = [q]$, and let $[r] \in \FF$ be such that $[q] \in U_r$.
  This means that $p$ forces the pattern $q$ in its $f$-preimages, and $r$ is a finite subpattern of $q$.
  There exists a finite subpattern $s$ of $p$ that forces $r$, for otherwise we could take larger and larger subpatterns of $p$ along with two preimages that disagree on $\dom(r)$, and in the limit obtain two preimages of $p$ that disagree on $\dom(r)$.
  Hence $\hat f(U_s) \subset U_r$.

  Suppose then that $\hat f([p]) = \top$, meaning that $p$ is an orphan.
  It is well known that $p$ contains a finite subpattern $r$ that is also an orphan.
  Then $\hat f(U_r) = \{\top\}$.
\end{proof}

We list some other easy properties of $\hat f$. For $\alpha, \beta \in \II$ write $\alpha \parallel \beta$ for $\alpha \cap \beta \neq \top$. In the case of cylinders, this means that the corresponding patterns agree on the intersection of their domains. For a pattern $p$, write $f(p)$ for the pattern obtained by applying the local rule of $f$ (with the minimal neighborhood) in every position whose neighborhood is contained in the domain of $p$ (and only those positions are included in the domain of $f(p)$). 
Note that with this definition $f([p]) \subset [f(p)]$, and the inclusion may be strict.

\begin{lemma}
\label{lem:HatBasics}
\begin{itemize}
\item $\FF \cup \{\top\}$ is preserved under $\hat f$. Indeed, we have $\hat f(\FF_M) \subset \FF_{M + N} \cup \{\top\}$ where $N$ is the minimal neighborhood of $f$.
\item $\hat f$ is monotone, i.e.\ $\alpha \leq \beta$ implies $\hat{f}(\alpha) \leq \hat{f}(\beta)$.
\item $\hat f \circ \hat g \leq \widehat{f \circ g}$ pointwise.
\item for all $\alpha \in \II$, either $\hat f(\alpha) = \top$ or $\hat f(\alpha) = [p]$ with $[f(p)] \leq \alpha$; in particular $[f(p)] \parallel \alpha$ in the latter case.
\item for all $\alpha \in \II$, we have $\hat f(\sigma_{\vec v}(\alpha)) = \sigma_{\vec v}(\hat f(\alpha))$.
\end{itemize}
\end{lemma}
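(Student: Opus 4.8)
The plan is to check the five bullet points one at a time, each straight from the definition \eqref{eq:generalhat} and the concrete reading of $\hat f([p])$ as the cylinder that records exactly the cells whose value is the same in every $f$-preimage of $[p]$. None of them is deep; the only thing to stay honest about is that in this order $[p] \le [q]$ means $[q] \subseteq [p]$, so fatter cylinders are \emph{lower}, $\bot$ is the minimum, and $\top = \emptyset$ is the maximum.

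I would do monotonicity first, since the composition bullet leans on it. If $\alpha \le \beta$ then $\beta \subseteq \alpha$ as subsets of $S^{\Z^d}$ (the case $\beta = \top$ is vacuous), so $f^{-1}(\beta) \subseteq f^{-1}(\alpha)$, hence $\{B \in \II : f^{-1}(\alpha) \subseteq B\} \subseteq \{B \in \II : f^{-1}(\beta) \subseteq B\}$; intersecting, the larger family yields the smaller set, so $\hat f(\beta) \subseteq \hat f(\alpha)$, i.e.\ $\hat f(\alpha) \le \hat f(\beta)$. For preservation of $\FF \cup \{\top\}$: a preimage of $[p] \in \FF_M$ is a configuration $x$ with $f(x)|M = p$, and $f(x)|M$ depends only on $x|(M+N)$; so if a preimage exists at all, any cell outside $M+N$ can be altered in it without breaking the preimage property, hence is not forced, and $\hat f([p])$ is $\top$ or a cylinder supported inside $M+N$, i.e.\ a member of $\FF_{M+N}$. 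Shift-equivariance is just that $f$ commutes with $\sigma_{\vec v}$, hence so does taking preimages, and $\sigma_{\vec v}$ is an order-automorphism of $\II$ fixing $\top$; substituting $B = \sigma_{\vec v}(B')$ in \eqref{eq:generalhat} and pulling $\sigma_{\vec v}$ out of the intersection gives $\hat f(\sigma_{\vec v}(\alpha)) = \sigma_{\vec v}(\hat f(\alpha))$.

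For the composition bullet the fact to isolate is $f^{-1}(\gamma) \subseteq \hat f(\gamma)$ for every $\gamma$ (the intersection in \eqref{eq:generalhat} runs over supersets of $f^{-1}(\gamma)$), together with the fact that $\hat f(\gamma)$ is the \emph{least} element of $\II$ containing $f^{-1}(\gamma)$. Taking powers as the model case, $(f^2)^{-1}(\alpha) = f^{-1}(f^{-1}(\alpha)) \subseteq f^{-1}(\hat f(\alpha)) \subseteq \hat f(\hat f(\alpha))$; the right side is a member of $\II$ containing $(f^2)^{-1}(\alpha)$, so it lies above the least such member $\widehat{f^2}(\alpha)$, i.e.\ $\hat f \circ \hat f \le \widehat{f^2}$. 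The identical inclusions, with $g$ supplying the inner map and the composition order read to match the paper's convention, give $\hat f \circ \hat g \le \widehat{f\circ g}$. The slogan is that evaluating the duals one step at a time discards the correlations that the one-shot dual retains.

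The last bullet is the only one where I would slow down. If $\alpha$ has no $f$-preimage then $f^{-1}(\alpha) = \emptyset$, the intersection in \eqref{eq:generalhat} runs over all of $\II$ and hence equals $\emptyset = \top$, so $\hat f(\alpha) = \top$. Otherwise write $\hat f(\alpha) = [p]$ and pick any preimage $x \in f^{-1}(\alpha) \subseteq [p]$; then $f(x) \in f([p]) \subseteq [f(p)]$ and also $f(x) \in \alpha$, so $f(x) \in [f(p)] \cap \alpha$, giving $[f(p)] \parallel \alpha$. For the sharper $[f(p)] \le \alpha$ I would check that each $\vec v \in \dom(f(p))$ already lies in $\dom(\alpha)$ with matching value: $\vec v \in \dom(f(p))$ means $\vec v + N \subseteq \dom(p)$, so every preimage $x$ has $x|(\vec v + N) = p|(\vec v + N)$ and hence $f(x)_{\vec v} = f(p)(\vec v)$, a value common to all the images $f(x) \in \alpha$; feeding in the locality bound $\dom(p) \subseteq \dom(\alpha) + N$ from the same reasoning as above then places $\vec v$ inside $\dom(\alpha)$ and forces agreement there. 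This placement step is the fiddliest part: it wants $\dom(\alpha)$ to be ``solid'' enough (as it is for the rectangular domains actually used in the paper), whereas the $\parallel$ conclusion --- which is all the later proofs invoke --- needs no such hypothesis.
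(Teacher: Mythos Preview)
The paper gives no proof of this lemma, merely labelling the properties as ``easy''. Your verification is correct and appropriately careful. In particular, your treatment of the fourth bullet is sharper than the paper's statement: the inequality $[f(p)] \le \alpha$ is \emph{not} true in general, and your caveat about needing $\dom(\alpha)$ to be ``solid enough'' is exactly right. A concrete counterexample: with $S = \{0,1\}$, $d = 1$, $N = \{-1,0,1\}$ and $f$ the AND rule $f(x)_0 = x_{-1} x_0 x_1$, take $\alpha = [q]$ with $\dom(q) = \{0,2\}$ and $q(0) = q(2) = 1$; then $\hat f(\alpha) = [p]$ with $p \equiv 1$ on $\{-1,0,1,2,3\}$, so $f(p) \equiv 1$ on $\{0,1,2\}$, and $[f(p)] \not\le \alpha$ since $\alpha$ does not constrain position $1$. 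For rectangular $\dom(\alpha)$ and the Moore neighborhood your placement argument does go through, and as you note, only the compatibility $[f(p)] \parallel \alpha$ is ever invoked downstream. So you have both proved what the paper uses and flagged a small imprecision in the stated lemma.
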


The next few results refer to $\FP^{\NP}$, the class of function problems solvable in deterministic polynomial time with the help of an oracle that can solve an $\NP$ decision problem in one step.
Of course, the oracle can be invoked repeatedly to construct $\NP$ certificates in a polynomial number of steps.
This class naturally captures the method of using SAT solvers as black boxes to compute preimages of finite patterns.

\begin{lemma}
  \label{lem:ComputableHat}
  For a fixed CA $f$, given $p \in \FF$, the image $\hat f([p])$ can be computed in $\FP^{\NP}$.
  It remains computable if $f$ is also given as input.
\end{lemma}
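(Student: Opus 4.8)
The plan is to reduce the computation of $\hat f([p])$ to a polynomial number of $\NP$ queries of the form ``does $p$ have a partial preimage taking a prescribed value in a given cell?''. First I would pin down the relevant domain: by the first item of Lemma~\ref{lem:HatBasics}, if $\dom(p) = M$ then $\hat f([p])$ is either $\top$ or a cylinder $[q]$ with $\dom(q) \subseteq M + N$, where $N$ is the minimal neighborhood of $f$. So it suffices to decide (i) whether $p$ is an orphan, and (ii) in the non-orphan case, for each cell $\vec v \in M + N$ and each $s \in S$, whether some preimage $x \in f^{-1}([p])$ has $x_{\vec v} = s$. The cell $\vec v$ then lies in $\dom(q)$ exactly when this holds for a unique value $s$, and in that case $q(\vec v) = s$.

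Next I would observe that these global questions are equivalent to finite ones on the window $M + N$. Since $f(x)|M$ depends only on $x|(M+N)$, we have $\{\,x|(M+N) \mid x \in f^{-1}([p])\,\} = \{\,r \in S^{M+N} \mid f(r)|M = p\,\}$: the inclusion $\subseteq$ is immediate, and conversely any $r$ in the right-hand set extends, by filling the complement of $M + N$ arbitrarily, to some $x \in f^{-1}([p])$. Consequently, $p$ is an orphan iff the latter set is empty, and a cell $\vec v \in M + N$ is forced to $s$ iff $\{\,r \in S^{M+N} \mid f(r)|M = p,\ r(\vec v) = s\,\}$ is nonempty while the analogous sets for all $s' \neq s$ are empty; cells outside $M + N$ are never forced (when $p$ is not an orphan) since any preimage may be modified there without leaving $f^{-1}([p])$. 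Non-emptiness of each such set is an $\NP$ question: a witness $r \in S^{M+N}$ has size polynomial in $|M|$ (and in $|N|, |S|$ when $f$ is part of the input), and checking $f(r)|M = p$ together with the cell constraint amounts to applying the local rule at the $|M|$ relevant positions, which is polynomial under any reasonable encoding of $f$.

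Finally I would assemble the algorithm: one oracle call settles the orphan case and returns $\top$ if appropriate; otherwise $O(|M|\cdot|S|)$ calls, ranging over $\vec v \in M + N$ (with $|M + N| \le |M|\,|N|$) and $s \in S$, determine $\dom(q)$ together with the values of $q$, after which $[q]$ is output. This is a deterministic polynomial-time procedure using the $\NP$ oracle, so $\hat f([p]) \in \FP^{\NP}$, and since nothing above used $f$ being fixed beyond keeping $|N|$ bounded in the running time, the same argument applies verbatim when $f$ is supplied as input. The only genuinely non-bookkeeping point is the equivalence between global forcing and finite forcing on the window $M + N$; because $f$ has a finite neighborhood this is a direct extension argument rather than an appeal to compactness, so I do not expect a real obstacle there. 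The one place that needs mild care is the encoding of $f$ in the input version, to ensure the verification step stays polynomial.
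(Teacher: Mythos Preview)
Your proposal is correct and follows essentially the same approach as the paper: restrict attention to the window $M+N$, use one oracle call to detect the orphan case, and then use polynomially many further calls to determine which cells are forced. The only cosmetic difference is that the paper asks, for each $\vec v \in M+N$, for a \emph{pair} of preimages differing at $\vec v$ (one query per cell), whereas you ask separately for a preimage with each value $s$ at $\vec v$ ($|S|$ queries per cell); both are polynomial, and your version is more explicit about the finite-window reduction and the encoding issues needed for the ``$f$ as input'' clause.
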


\begin{proof}
  Since $\hat f(\FF_M) \subset \FF_{M + N}$, we only need to determine which coordinates in $M + N$ are forced in preimages.
  This requires at most $1+|M + N|$ calls to an NP oracle: one to request a preimage, and for each $\vec v \in M+N$, one to request a pair of preimages which differ at $\vec v$.
\end{proof}

The proof above is the easiest way to get the theoretical result, but for practical purposes we give Algorithm~\ref{alg:Hat}, which tends to find the $\hat f$-image much quicker (and is just as quick to implement). It is written for an ``incremental oracle'', meaning we can only \emph{add} constraints to it (represented by the set $F$) when we make a new query. In this case, we compute a single $f$-preimage $q$ of the input pattern $p$, and then compute additional preimages that differ from $q$ on progressively smaller sets of cells. Modern SAT solvers tend to support such incremental access -- of course, on the side of theory it is easy to see that the class $\FP^{\NP}$ is the same whether or not queries are restricted to be incremental.

\begin{algorithm}[htp]
  \caption{Finding $\hat f([p])$ for a finite pattern $p \in S^M$.}
\label{alg:Hat}
\begin{algorithmic}
  \Function{HatCA}{$f, p$}
    \State Let $\mathcal{O} \gets \text{NP oracle}$.
    \If{$\mathcal{O}$ finds a pattern $q \in f^{-1}(p)$}
      \State Let $D \gets M + N$.
      \State Let $F \gets \{(q, D)\}$.
      \Loop
        \If{$\mathcal{O}$ finds a pattern $q' \in f^{-1}(p)$ with $q'|E \neq r|E$ for all $(r,E) \in F$}
          \State Let $D \gets D \cap \mathrm{eq}(q, q')$.
          \State Let $F \gets F \cup \{(q',D)\}$
        \Else
          \State \textbf{return} $q|D$
        \EndIf
      \EndLoop
    \Else
      \State \textbf{return} $\top$
    \EndIf
  \EndFunction
\end{algorithmic}
\end{algorithm}

Our results rely on the existence of patterns $p$ that force large patterns into their preimages, meaning that $\hat f([p])$ is large in the sense of $\leq$.
We say a pattern $p$ is \emph{self-enforcing} under $f$ if $[p] \leq \hat f([p])$.
In a slight abuse of terminology, we also say that a temporally $t$-periodic configuration $x \in S^{\Z^d}$ is \emph{self-enforcing} if $\widehat{(f^t)}([x]) = [x]$.
A self-enforcing agar is then a spatially and temporally periodic configuration that has a unique chain of preimages.

\begin{lemma}
  \label{lem:SEARE}
  The set of all pairs $(f, x)$ such that $f$ is a CA on $S^{\Z^d}$ and $x \in S^{\Z^d}$ is a self-enforcing agar is recursively enumerable.
\end{lemma}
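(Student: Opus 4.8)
We describe a semi-algorithm that accepts exactly the pairs $(f,x)$ in question. Here $f$ is presented by a finite local rule (a minimal neighbourhood $N$ and a function $S^N\to S$), and $x$ --- being in particular spatially periodic, as any agar is --- is presented by a finite-index period lattice $L\le\Z^d$ together with the values of $x$ on a fundamental domain $F_0$; pairs not of this form are never accepted. On such an input we first search for a temporal period: for $t=1,2,\dots$ we compute the (again $L$-periodic, hence finitely representable and computable) configuration $f^t(x)$ and test $f^t(x)=x$ on $F_0$. If no $t$ works then $x$ is not temporally periodic and we correctly run forever; otherwise we fix such a $t$ and put $g:=f^t$, a CA whose local rule we obtain by iterating that of $f$. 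It remains to semi-decide whether $\hat g([x])=[x]$; note that $g(x)=x$, so $x\in g^{-1}([x])$ and in particular $\hat g([x])\neq\top$.

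The crux is the equivalence
\[
  \hat g([x]) = [x] \iff \exists\text{ a finite window } W \text{ with } F_0 \subseteq \dom\bigl(\hat g([x|W])\bigr).
\]
Granting this, we finish as follows: enumerate finite windows $W_1\subseteq W_2\subseteq\cdots$ exhausting $\Z^d$; for each $W_n$ compute $\hat g([x|W_n])$, which is possible by Lemma~\ref{lem:ComputableHat} (problems in $\FP^{\NP}$ are in particular total computable functions), or in practice by Algorithm~\ref{alg:Hat}; and halt, accepting, as soon as the resulting pattern is defined on all of $F_0$. Run dovetailed with the search for $t$, this is the claimed semi-algorithm.

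For the forward implication of the equivalence we use continuity of $\hat g$ (Lemma~\ref{lem:continuous}): the basic set $U_{x|F_0}$ is an open neighbourhood of $\hat g([x])=[x]$, so there is a basic neighbourhood $U_{x|W}$ of $[x]$ with $\hat g(U_{x|W})\subseteq U_{x|F_0}$; since $[x|W]\in U_{x|W}$ and $\hat g([x|W])\neq\top$ (as $x\in g^{-1}([x|W])$), we get $F_0\subseteq\dom(\hat g([x|W]))$. For the backward implication, monotonicity (Lemma~\ref{lem:HatBasics}) gives $\hat g([x|W])\le\hat g([x])$; writing $\hat g([x])=[q]$ and $\hat g([x|W])=[q']$, both $q$ and $q'$ agree with $x$ on their domains (because $x$ lies in $g^{-1}([x])$ and in $g^{-1}([x|W])$), and $q$ extends $q'$, so $F_0\subseteq\dom(q)$. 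Finally $\hat g$ commutes with the shifts (Lemma~\ref{lem:HatBasics}) and $L$ stabilizes $[x]$, so $[q]=\hat g([x])$ is $L$-invariant; hence $\dom(q)$ is an $L$-periodic subset of $\Z^d$ containing a fundamental domain, i.e.\ $\dom(q)=\Z^d$, and then $q$ --- agreeing with $x$ everywhere --- equals $x$.

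The main obstacle is precisely this reduction to a finite condition: passing from the a priori ``infinitary'' requirement ``every $g$-preimage of $x$ equals $x$'' to the statement that a single finite patch of $x$ already forces a whole fundamental domain. The forward direction is essentially the compactness argument behind Lemma~\ref{lem:continuous}, and the backward direction hinges on combining that finite forcing with the spatial periodicity of $x$ through the shift-equivariance of $\hat g$; the rest (computability of $f^t$ and of $\hat g$ on finite patterns, and the encoding of inputs) is routine.
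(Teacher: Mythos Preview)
Your proof is correct and follows essentially the same approach as the paper's: both arguments use continuity of $\hat g$ (Lemma~\ref{lem:continuous}) to extract a finite certificate patch forcing a fundamental domain, and then shift-equivariance of $\hat g$ together with the spatial periodicity of $x$ (Lemma~\ref{lem:HatBasics}) to propagate this forcing to all of $\Z^d$. Your version spells out more of the routine details (input encoding, the dovetailed search for a temporal period, why the forced values must agree with $x$), but the mathematical core is the same.
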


\begin{proof}
  Let $x \in S^{\Z^d}$ be a self-enforcing agar with spatial periods $n_1 \vec e_1, \ldots, n_d \vec e_d$ and temporal period $t$.
  Denote the iterated CA by $h = f^t$, and let $B = [0, n_1-1] \times \cdots \times [0, n_d-1]$.
  We need to find a certificate for $\hat{h}([x]) = [x]$.
  For this, observe that by continuity of $\hat h$ there is a finite subpattern $p$ of $x$ such that $\hat h([y]) \geq [x|B]$ for every configuration $y \in [p]$. This implies $\hat h([p]) \geq [x|B]$. By Lemma~\ref{lem:ComputableHat}, this latter inequality can be checked in $\FP^{\NP}$.

We claim that $p$ is a certificate that $x$ is a self-enforcing agar.
Let $\vec v \in V = \langle n_1 \vec e_1, \ldots, n_d \vec e_d \rangle$ be arbitrary.
We compute
\begin{align*}
  \hat h([x]) = \sigma_{\vec v}(\hat h([x])) \geq \sigma_{\vec v}(\hat h([p])) \geq \sigma_{\vec v}([x|B]) = [x|\vec v + B],
\end{align*}
and since $\Z^d = \bigcup_{\vec v \in V} (\vec v + B)$, this implies $\hat h([x]) = [x]$.
\end{proof}

\begin{remark}
\label{rem:SEARE}
The semi-algorithm described in the proof is not very practical: given an agar, we have no information about how large the certificate could be, so for each agar we either need to guess some certificate size, or we have to keep trying increasingly large certificates. Our implementation runs in parallel a search for other periodic preimages for the agar -- if such a preimage exists, then clearly the agar does not enforce itself, and we can stop looking for a certificate. We omit the pseudocode.

Most agars in the range we searched were either self-enforcing or had another periodic preimage. There exist two-dimensional cellular automata whose set of self-enforcing agars is not computable (by a relatively simple reduction from the tiling problem of Wang tiles~\cite{Be66}, which we omit), but we do not know whether this is the case for Game of Life.
\end{remark}

Say a pattern $p \subset S^M$ is \emph{locally fixed} for the CA $f$ if there exists a pattern $q \in S^{M+N}$ (where $N$ is the minimal neighborhood of $f$) such that $p = q|M = f(q)|M$.

\begin{lemma}
  For every CA $f$ on $S^{\Z^d}$, every locally fixed pattern $p \in S^M$ admits a unique maximal self-enforcing subpattern.
  For a fixed CA $g$, given $p$, a vector $\vec v \in \Z^d$ and $n \geq 1$ in unary, it can be computed in $\FP^\NP$ for the CA $f = \sigma_{\vec v} \circ g^n$.
\end{lemma}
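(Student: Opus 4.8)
The plan is to reduce the whole statement to the monotonicity and computability properties of the dual map $\hat f$ proved above. Throughout, for $D \subseteq M$ write $q_D$ for the pattern with $\hat f([p|D]) = [q_D]$. This is well defined (never $\top$) for every $D \subseteq M$: a locally fixed witness $r \in S^{M+N}$ with $p = r|M = f(r)|M$ extends to any configuration $x$ with $x|(M+N) = r$, and then $f(x)|M = p$ since each cell of $M$ has its neighborhood inside $M+N$; so $p$, hence every subpattern $p|D$, has an $f$-preimage and is not an orphan. Recall also that in the order on $\II$ one has $[a] \leq [b]$ exactly when the pattern $b$ extends the pattern $a$, and that $\hat f$ is monotone (Lemma~\ref{lem:HatBasics}).

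I would prove existence, uniqueness and computability simultaneously via a top-down fixpoint procedure. Set $D_0 = M$. Given $D_i$, compute $q_{D_i} = \hat f([p|D_i])$ (via Lemma~\ref{lem:ComputableHat}, or Algorithm~\ref{alg:Hat} in practice). If $q_{D_i}$ extends $p|D_i$, i.e.\ $D_i \subseteq \dom(q_{D_i})$ and $q_{D_i}|D_i = p|D_i$ — equivalently $p|D_i$ is self-enforcing — then halt and output $p|D_i$. Otherwise set
\[
  D_{i+1} = \{\, \vec v \in D_i \mid \vec v \in \dom(q_{D_i}),\ q_{D_i}(\vec v) = p(\vec v) \,\},
\]
which is a proper subset of $D_i$ (any $\vec v \in D_i$ violating the halting condition is excluded), and continue. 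Since $M$ is finite the procedure halts after at most $|M|+1$ steps, in the worst case at $D = \emptyset$, whose empty pattern is vacuously self-enforcing.

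For correctness: the output $p|D_k$ is self-enforcing by the halting condition, and I claim it contains every self-enforcing subpattern of $p$. If $p|D$ is self-enforcing, then $D \subseteq D_i$ for all $i$, by induction: $D \subseteq M = D_0$; and if $D \subseteq D_i$, then $[p|D] \leq [p|D_i]$, so $\hat f([p|D]) \leq \hat f([p|D_i]) = [q_{D_i}]$ by monotonicity, while $[p|D] \leq \hat f([p|D])$ by self-enforcement, so $[p|D] \leq [q_{D_i}]$; that is, $q_{D_i}$ extends $p|D$, so each $\vec v \in D$ lies in $\dom(q_{D_i})$ with $q_{D_i}(\vec v) = p(\vec v)$, i.e.\ $\vec v \in D_{i+1}$. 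Hence $D \subseteq D_k$. Therefore $p|D_k$ is a self-enforcing subpattern of $p$ above all others, so it is the unique maximal (indeed the maximum) one. Equivalently, the family of domains of self-enforcing subpatterns of $p$ is closed under union and hence has a largest element, which the procedure computes. For the complexity claim, take $f = \sigma_{\vec v} \circ g^n$ with $n$ in unary: its neighborhood sits inside the box $\vec v + [-n,n]^2$ of polynomial extent, and the predicate ``the finite pattern $s$ has an $f$-preimage meeting a given list of per-cell equality/disequality constraints'' is in $\NP$ (guess a preimage on a box of polynomial size, verify by simulating $g$ for $n$ steps); so each call to $\hat f$ is in $\FP^{\NP}$ by Lemma~\ref{lem:ComputableHat}, and the procedure makes at most $|M|+1$ such calls on polynomial-size patterns, staying in $\FP^{\NP}$.

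The point that needs the most care is the fixpoint argument — specifically the observation that a single round of ``restrict $p$ to the cells forced to their $p$-value in all preimages'' does not already yield $D^*$: shrinking the domain weakens the constraint and can only add preimages, hence can only shrink the set of cells that agree across all of them, so the restriction step genuinely has to be iterated to stabilization, which is where finiteness of $M$ enters. The remaining ingredients — that locally fixed patterns and all their subpatterns are non-orphans, so $\hat f$ never returns $\top$ in this run, and that the order on $\II$ is used in the correct direction throughout — are routine but must be tracked carefully.
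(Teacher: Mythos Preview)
Your proof is correct and essentially identical to the paper's: both run the same top-down fixpoint iteration (the paper's Algorithm~\ref{alg:Patch}), prove correctness via the monotonicity invariant that every self-enforcing domain stays inside each $D_i$, and bound the number of oracle rounds by $|M|$. Your write-up is slightly more explicit about why $\hat f$ never returns $\top$ during the run and why the $\NP$ oracle handles $f = \sigma_{\vec v} \circ g^n$ with $n$ in unary, but the argument is the same.
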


\begin{proof}
  Since $p$ has finitely many subpatterns and the empty pattern is trivially self-enforcing, $p$ admits at least one maximal self-enforcing subpattern.
  If $D, D' \subset M$ satisfy $\hat{f}([p|D]) \geq [p|D]$ and $\hat{f}([p|D']) \geq [p|D']$, then $\hat{f}([p|D \cup D']) \geq [p|D \cup D']$ by monotonicity of $\hat{f}$.
  Thus $q = p|E$ for $E = \bigcup \{ D \subset M \mid \hat{f}([p|D]) \geq [p|D] \}$ is the unique self-enforcing subpattern.

  Then fix $g$, and let $p$, $\vec v$ and $n$ be given.
  We apply Algorithm~\ref{alg:Patch} to the CA $f = \sigma_{\vec v} \circ g^n$.
  On each iteration of the loop, the algorithm replaces $p$ with the maximal subpattern forced by $p$ (here we use the fact that $p$ is locally fixed).
  Since $q$ is a subpattern forced by itself, by monotonicity it is also forced by each of these subpatterns, and thus remains a subpattern on each iteration.
  Since $q$ is maximal and $p$ has finitely many subpatterns, the algorithm eventually converges on $q$.

  Finally, Algorithm~\ref{alg:Patch} is in $\FP^\NP$, since the number of iterations of the loop is at most $|M|$, and \textsc{HatCA}($f, p$) is in $\FP^\NP$ with respect to these parameters.
\end{proof}

\begin{algorithm}[htp]
\caption{Finding the maximal self-enforcing subpattern of a locally fixed pattern $p \in S^M$.}
\label{alg:Patch}
\begin{algorithmic}
  \Function{SelfEnforcingSubpattern}{$p$}
    \Loop
      \State Let $q \gets$ \Call{HatCA}{$f, p$}${}|M$.
      \If{$q = p$}
        \State \textbf{return} $q$
      \Else
        \State Let $p \gets q$.
      \EndIf
    \EndLoop
  \EndFunction
\end{algorithmic}
\end{algorithm}

To conclude this section, we show that in the formulation of the Generalized grandfather problem (which we prove as Theorem~\ref{thm:Unstable}), it makes no difference whether we consider unrestricted configurations, finite configurations or finite patterns.
This is well-known in cellular automata theory.

\begin{lemma}
  \label{lem:finite-unstable}
  Let $f : S^{\Z^d} \to S^{\Z^d}$ be a cellular automaton with a quiescent state $0 \in S$, and $n \in \N$.
  The following are equivalent:
  \begin{enumerate}
  \item
    There exists a finite configuration $x \in S^{\Z^d}$ such that $f^{-n}(x)$ contains a finite configuration, and $f^{-(n+1)}(x) = \emptyset$.
  \item
    There exists $x \in S^{\Z^d}$ such that $f^{-n}(x) \neq \emptyset$ and $f^{-(n+1)}(x) = \emptyset$.
  \item
    There exists a finite pattern $p$ such that $f^{-n}(p) \neq \emptyset$ and $f^{-(n+1)}(p) = \emptyset$.
  \end{enumerate}
\end{lemma}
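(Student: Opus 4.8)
The plan is to prove the cycle of implications $(1) \Rightarrow (2) \Rightarrow (3) \Rightarrow (1)$. The first implication is immediate, since a finite configuration is in particular a configuration. For $(2) \Rightarrow (3)$, the point is that every iterated image $f^{k}(S^{\Z^d})$ is a subshift, hence closed. Given $x$ with $f^{-n}(x) \neq \emptyset$ and $f^{-(n+1)}(x) = \emptyset$, the second condition says precisely $x \notin f^{n+1}(S^{\Z^d})$; since this set is closed, there is a finite $D$ with $[x|D] \cap f^{n+1}(S^{\Z^d}) = \emptyset$, so the finite pattern $p := x|D$ does not occur in $f^{n+1}(S^{\Z^d})$, which is exactly $f^{-(n+1)}(p) = \emptyset$. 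On the other hand $f^{-n}(x) \neq \emptyset$ gives $x \in f^{n}(S^{\Z^d})$, so $p$ occurs in $f^{n}(S^{\Z^d})$, and restricting any $n$-step preimage of $x$ to the appropriate finite window yields an $n$-step preimage pattern of $p$; thus $f^{-n}(p) \neq \emptyset$.

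For $(3) \Rightarrow (1)$, which is where the quiescent state $0$ is used, I would start from a finite pattern $p$ with an $n$-step preimage pattern $q$ (so $\dom(q)$ is $\dom(p)$ enlarged by the $n$-fold Minkowski sum of the minimal neighborhood $N$, and $q$ evolves into $p$ in $n$ steps) and $f^{-(n+1)}(p) = \emptyset$. Let $\bar q$ be the finite configuration agreeing with $q$ on $\dom(q)$ and equal to $0$ elsewhere, and set $x := f^{n}(\bar q)$. Since $0$ is quiescent, $f$ preserves finiteness, so $x$ is a finite configuration, and $\bar q$ is a finite element of $f^{-n}(x)$; this gives the first half of $(1)$. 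By locality, $x|\dom(p)$ depends only on $\bar q|\dom(q) = q$ and hence equals $p$. If some $z$ satisfied $f^{n+1}(z) = x$, then restricting $z$ to the finite window over which the notation $f^{-(n+1)}(p)$ ranges would produce a pattern evolving to $x|\dom(p) = p$ in $n+1$ steps, contradicting $f^{-(n+1)}(p) = \emptyset$. Hence $f^{-(n+1)}(x) = \emptyset$.

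I expect the only thing requiring care — and it is bookkeeping rather than a genuine obstacle — to be keeping track of the exact shapes of preimage patterns (iterated Minkowski sums of $N$), so that the passage between ``$f^{-n}(p) \neq \emptyset$ as a set of patterns'' and ``$p$ occurs in $f^{n}(S^{\Z^d})$'' is literally an equivalence, and so that the restriction argument in $(3) \Rightarrow (1)$ lands on precisely the domain used to define $f^{-(n+1)}(p)$. These translations between patterns and configurations are routine compactness arguments, and no deeper input is needed.
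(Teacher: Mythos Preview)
Your proposal is correct and follows essentially the same route as the paper: the implications $1 \Rightarrow 2$ (trivial), $2 \Rightarrow 3$ (compactness/closedness of the iterated image), and $3 \Rightarrow 1$ (extend an $n$-step preimage pattern by $0$s and apply $f^n$) are exactly what the paper does. You spell out a few points the paper leaves implicit---quiescence forcing $x = f^n(\bar q)$ to be finite, and the domain bookkeeping---but there is no substantive difference in approach.
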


\begin{proof}
  The implication 1 $\implies$ 2 is clear, and 2 $\implies$ 3 is the classical compactness argument that we used to prove Lemma~\ref{lem:continuous}.

  We prove 3 $\implies$ 1.
  Take an arbitrary $q \in f^{-n}(p)$, and complete it into a finite configuration $y \in S^{\Z^d}$ by setting $y_{\vec v} = 0$ for all $\vec v \in \Z^d \setminus \dom(q)$.
  Then $x = f^n(y)$ satisfies the conditions of item 1: $f^{-n}(x)$ contains the finite configuration $y$, while $f^{-(n+1)}(x) = \emptyset$ since $x$ contains an occurrence of $p$.
\end{proof}

\subsection{K\"oynn\"os}
\label{sec:koynnos}

We begin by studying \koynnos{}, which we recall is obtained from the $6 \times 3$ pattern
\[
  P =
  \begin{array}{cccccc}
    1 & 1 & 1 & 0 & 0 & 0 \\
    0 & 1 & 0 & 1 & 1 & 1 \\
    0 & 0 & 0 & 0 & 1 & 0
  \end{array}
\]
by repeating $P$ horizontally and vertically to define an infinite $6 \times 3$-periodic configuration $x^P \in \{0,1\}^{\Z^2}$.
Observe that every $0$ in $x^P$ is surrounded by exactly four $1$s, and every $1$ by exactly three $1$s.
Thus we have $g(x^P) = x^P$, so that $x^P$ is indeed an agar.
Moreover, we claim that $x^P$ has no other predecessors than itself: $g^{-1}(x^P) = \{x^P\}$. This is due to the following lemma.

\begin{lemma}
  \label{lem:KoynnosForcing}
  Let $x$ be in the spatial orbit of \koynnos{}. Then $\hat g(x|[-12,17] \times [-12,14]) \geq x|[-8, 13] \times [-9, 10]$. 
\end{lemma}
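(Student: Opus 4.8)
The plan is to read Lemma~\ref{lem:KoynnosForcing} as the explicit finite certificate for the self-enforcement of \koynnos{} that is produced abstractly in the proof of Lemma~\ref{lem:SEARE}. Fix a configuration $x$ in the spatial orbit of \koynnos{} and abbreviate $W = [-12,17]\times[-12,14]$ and $W' = [-8,13]\times[-9,10]$. Since $x^P$ is its own preimage, $x|W$ is not an orphan, so $\hat g(x|W) = [y_0]$ for a pattern $y_0$ with domain inside $W + N = [-13,18]\times[-13,15]$ (first item of Lemma~\ref{lem:HatBasics}, $N = [-1,1]^2$ being the minimal neighborhood of $g$); by Lemma~\ref{lem:ComputableHat} this $y_0$ is computable, and the asserted inequality just says $\dom(y_0)\supseteq W'$ and $y_0|W' = x|W'$, a finite statement. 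Because $\hat g$ is shift-equivariant (last item of Lemma~\ref{lem:HatBasics}) and \koynnos{} is $6\times 3$-periodic, there are at most $18$ patterns $x|W$ to examine, so one complete proof is to run Algorithm~\ref{alg:Hat} on each of them, using a SAT solver as the NP oracle, and read off the output; this is what the accompanying scripts do.

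For a self-contained argument I would instead trace the inward propagation of forced cells. Let $y$ be any pattern on $W + N$ with $g(y) = x|W$, where $g(y)$ denotes the local rule applied in every cell whose neighborhood lies in $\dom(y)$; the goal is to show $y|W' = x|W'$. The engine is the extreme rigidity of \koynnos{}: by the observation preceding this subsection every live cell of $x$ has exactly three live neighbors and every dead cell exactly four, so every image cell of $x|W$ sits at the boundary of its admissible neighbor-count window and has essentially no slack. Starting at the outermost ring of $W$, the known image values together with the handful of already-pinned $y$-cells leave no choice for the adjacent $y$-cells without violating a neighbor-count constraint; each newly forced $y$-cell then tightens the sums controlling the image cells one ring further in, forcing the next layer, and so on. Iterating this peeling from all four sides should cover $W'$; once one has shown that \emph{every} such $y$ satisfies $y|W' = x|W'$, this is by definition the assertion $\hat g(x|W) \geq x|W'$.

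The main obstacle is bookkeeping: the peeling involves a large case analysis — each layer has many cells, each cell several local continuations to exclude — and it is easy both to slip in a case and to misjudge how deep the forcing front actually reaches. The margins in the statement ($4$ columns lost on each side, $3$ rows at the bottom and $4$ at the top) are exactly the depth to which the front penetrates before it can no longer be pushed inward using only the information in the $30\times 27$ window, and checking that these margins still leave $W'$ entirely covered is precisely the finite computation above. Accordingly, in the write-up I would present the proof as this reduction to a finite, SAT-certified check and include only a short sketch of the peeling to explain why \koynnos{} forces itself so aggressively; the same template is then reused, with different windows, for the analogous forcing lemmas of \kynnos{} and the marching band.
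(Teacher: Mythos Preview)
Your proposal is correct and takes essentially the same approach as the paper: the paper's entire proof is to apply Algorithm~\ref{alg:Hat} to $\sigma_{\vec v}(x^P)|[-12,17]\times[-12,14]$ for each $\vec v \in [0,5]\times[0,2]$ and observe (via Figure~\ref{fig:DomainIntersection}) that the intersection of the forced domains contains $[-8,13]\times[-9,10]$. Your additional ``peeling'' sketch is not in the paper and is offered only as intuition, which is fine; the paper, like you, treats the lemma purely as a SAT-certified finite check.
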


\begin{proof}
  Applying Algorithm~\ref{alg:Hat} to $\sigma_{\vec v}(x^P)|[-12,17] \times [-12,14]$ for all $\vec v \in [0,5] \times [0,2]$ gives the result. The intersection of the domains of the patterns $\hat g(x|[-12,17] \times [-12,14])$ for such $x = \sigma_{\vec v}(x^P)$ is shown in Figure~\ref{fig:DomainIntersection}, and clearly contains the rectangle $[-8, 13] \times [-9, 10]$. 
\end{proof}

Put concretely, the lemma states that if $R$ is a periodic continuation of $P$ of size $30 \times 27$ and $Q$ is its predecessor, then $P$ must occur at the center of $Q$ (and indeed many more cells are forced, even beyond what we state in the lemma). This is indeed a certificate for $x^P$ being a self-enforcing agar, as in the proof of Lemma~\ref{lem:SEARE}: for any predecessor $y \in g^{-1}(x^P)$ and cell $\vec v \in \Z^d$, Lemma~\ref{lem:KoynnosForcing} gives $\sigma_{\vec v}(y)|[-8, 13] \times [-9, 10] = \sigma_{\vec v}(x^P)|[-8, 13] \times [-9, 10]$, so in particular $y_{\vec v} = x^P_{\vec v}$.

\begin{figure}[htp]
\begin{center}
  \pgfplotstableread{domain_intersection.cvs}{\matrixfile}
  \begin{tikzpicture}[scale=0.2]
  \drawitt{}
  \draw[fill=blue,opacity=0.2] (5, -24) rectangle (27, -4);
  \end{tikzpicture}
  \vspace{-0.2cm}
  \caption{The intersection of the domains of $\hat g(x|[-12,17] \times [-12,14])$ for $x$ in the spatial orbit of \koynnos{}, drawn in white inside $[-13, 18] \times [-13, 15]$. The area $[-8, 13] \times [-9, 10]$ is highlighted in blue.} 
  \label{fig:DomainIntersection}
  \end{center}
\end{figure}
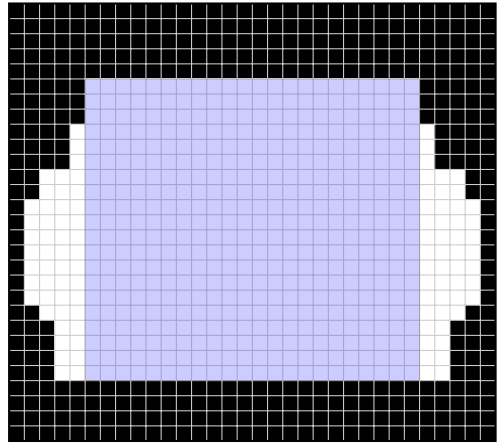


As a corollary of Lemma~\ref{lem:KoynnosForcing}, finite perturbations of $x^P$ can never be erased by $g$.
We prove a stronger claim: all finite perturbations spread to the left and right at a speed of one column per time step.
In particular, \koynnos{} cannot be stabilized from the inside.
We note that, as the agar \kynnos{} studied in the next section does not possess this property, we cannot use it to prove Theorem~\ref{thm:polytime}, at least with the same method.

\begin{lemma}
  \label{lem:explodev2}
  Consider a rectangle $R = [-n_W,n_E]\times[-n_S,n_N]$ and the surrounding annulus $A = \sannu{n_W+1}{n_E+1}{n_S+1}{n_N+1}{n_W}{n_E}{n_S}{n_N}$ of thickness 1.
  Let $p$ be a pattern such that the domain of $g(p)$ contains $A \cup R$, and suppose $p|A = g(p)|A = x^P|A$.
  If $\diff(x^P,g(p)) \cap R \subset [a, b] \times \Z$, then $\diff(x^P,p) \cap R \subset [a+1,b-1] \times \Z$.
\end{lemma}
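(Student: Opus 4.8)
The plan is to strip the statement down to a purely local fact about a single ``dirty'' column of $x^P$ — one that can be settled by a finite inspection over the six column-residues — and then feed that fact the hypothesis that the annulus $A$ is clean. First, by the horizontal reflection symmetry of $x^P$ (reading $P$ with its columns reversed gives back a translate of $P$), it suffices to prove the left half of the conclusion: if $\diff(x^P,g(p))\cap R\subseteq[a,b]\times\Z$ then $p$ agrees with $x^P$ on every column $\leq a$ of $R$. The symmetric argument then gives agreement on columns $\geq b$, and together these are exactly $\diff(x^P,p)\cap R\subseteq[a+1,b-1]\times\Z$. So assume $\diff(x^P,p)\cap R\neq\emptyset$, let $c$ be its leftmost column, and suppose toward a contradiction that $c\leq a$. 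Since $p|A=x^P|A$ and $p$ agrees with $x^P$ on all columns strictly left of $c$ inside $A\cup R$, for every row $k$ of $R$ the value $g(p)(c-1,k)$ is determined by $p|(A\cup R)$ (the exceptional case $c=-n_W$, where the relevant neighborhood pokes just outside $A\cup R$, is absorbed into the boundary bookkeeping mentioned below), and it equals the Game of Life output at $(c-1,k)$ of the configuration obtained from $x^P$ by toggling exactly the cells $(c,s)$ with $p(c,s)\neq x^P(c,s)$; write $E\subseteq[-n_S,n_N]$ for this nonempty set of toggled rows.

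The crux is the following one-column statement, which I would verify by a finite case analysis: if an arbitrary nonempty finite set $E$ of cells is toggled in a single column $j$ of $x^P$ and the rest of $x^P$ is left in place, then the $g$-image of the result differs from $x^P$ at some cell $(j-1,k)$ with $k$ within $L^\infty$-distance $1$ of $E$ (and, symmetrically, at some cell of column $j+1$). Applying this with $j=c$: if the row $k$ produced lies in $[-n_S,n_N]$, then $(c-1,k)\in R$ and $g(p)(c-1,k)$ (the toggled-$x^P$ output) differs from $x^P(c-1,k)$, so $g(p)$ is dirty in column $c-1\leq a-1$, contradicting $\diff(x^P,g(p))\cap R\subseteq[a,b]\times\Z$; if instead $k\in\{-n_S-1,n_N+1\}$, so that $(c-1,k)$ lies in the annulus where $g(p)=x^P$ is assumed, one has to rule this situation out from the hypotheses, and this is the genuinely delicate point. (It does not occur in the application to Theorem~\ref{thm:polytime}, where the perturbation is surrounded by a wide clean margin, so $c$ is far from $\partial R$ and the produced cell is safely inside $R$.)

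To prove the crux I would exploit the self-calibrating structure of $x^P$: every live cell of $x^P$ has exactly three live neighbors and every dead cell exactly four, so toggling a single cell of $x^P$ \emph{upward} flips the $g$-output precisely at the live cells among its eight neighbors, while toggling it \emph{downward} flips it precisely at the dead ones. Combined with the fact that no column of $x^P$ contains three consecutive equal bits, examining the topmost and bottommost toggled cells $\max E$ and $\min E$ already exhibits a flipped cell in column $j-1$ for most residues of $j$ modulo $6$; for the few residues where the extremal-cell argument is not decisive one runs a short induction on the toggle pattern (in the spirit of showing that the only toggle set ``invisible'' in column $j-1$ is the empty one). The main obstacle is precisely this finite verification together with the boundary bookkeeping: one must make sure the flipped cell in column $j-1$ can be located inside $A\cup R$ — invoking the clean-annulus hypothesis to exclude the configurations where it would otherwise escape into $A$ — and must also dispatch the edge case $c=-n_W$ in the same way. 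Everything else is routine tracking of which cells of $g(p)$ are pinned down by $p|(A\cup R)$.
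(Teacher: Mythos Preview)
Your approach matches the paper's: reduce by left--right symmetry, take the extremal dirty column, and show a difference must appear in the adjacent column of $g(p)$. The paper sharpens your ``crux'' with one additional move you only half-commit to: within the extremal column $i$ it also picks the extremal dirty row $j$ (maximal when $i \in \{0,2,3\} + 6\Z$, minimal otherwise). With that choice the flipped cell in column $i+1$ is always at row $j$ or $j\pm 1$, and the verification is a single neighbor count per residue class of $(i \bmod 6, j \bmod 3)$ --- no induction on the toggle set $E$ is needed, contrary to your hedge that ``for the few residues where the extremal-cell argument is not decisive one runs a short induction.'' Your self-calibration observation (every live cell of $x^P$ has exactly three live neighbors, every dead cell exactly four) is precisely what makes each of these single-cell checks go through.

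On the boundary issue you flag as ``genuinely delicate'': the paper does not treat it separately. The identified cell always lies in $A \cup R$ (column $i+1 \leq n_E+1$, row in $[-n_S-1, n_N+1]$), so a difference there contradicts either the hypothesis $\diff(x^P, g(p)) \cap R \subset [a,b] \times \Z$ or the hypothesis $g(p)|A = x^P|A$. The local computation at that cell can, however, reach one row or column outside $A$ (e.g.\ when $j = n_N$ and one examines row $j+1$), and the paper does not spell out why this is harmless. You are right that this edge case does not arise in the application to Theorem~\ref{thm:polytime}; the paper is equally informal here.
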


Note that we may have $b - a \leq 1$, in which case the conclusion becomes $\diff(x^P,p) \cap R = \emptyset$, or equivalently, $x^P|R = p|R$.

\begin{proof}
  Since the orbit of \koynnos{} and $g$ are left-right symmetric, it is enough to prove that $\diff(x^P, p) \cap R \subset (-\infty,b-1]$.
  We prove the contrapositive: suppose there exists $(i, j) \in \diff(x^P,p) \cap R$ for some $i \geq b$, and let $i$ be maximal.
  We split into cases based on the congruence class of $i$ modulo 6, that is, the column of $P$ that $i$ lies in.
  Note that the bottom left cell of $P$ is at the origin in $x^P$, and the domain of $P$ is the rectangle $[0,5] \times [0,2]$.
  If $i \in \{0, 2, 3\} + 6 \Z$, we choose $j$ as maximal, and otherwise we choose it as minimal.

  We handle the case $i \in 2 + 6 \Z$, the others being similar or easier.
  If $j \in 3 \Z$, then $p_{(i+1,j+1)} = x^P_{(i+1,j+1)} = 1$ has four other $1$s in its neighborhood, and becomes $0$ in $g(p)$.
  If $j \in 1 + 3\Z$, then $p_{(i+1,j)} = x^P_{(i+1,j+1)} = 1$ has four or five other $1$s in its neighborhood, and becomes $0$ in $g(p)$.
  If $j \in 2 + 3 \Z$, then $p_{(i+1,j+1)} = x^P_{(i+1,j+1)} = 0$ has three $1$s in its neighborhood, and becomes $1$ in $g(p)$.
  In each case $\diff(x^P, g(p))$ intersects $\{i+1\} \times \Z$.
\end{proof}

For any $D \subset \Z^2$, the subpattern of \koynnos{} of shape $D + [0,29] \times [0,26]$ forces the subpattern of shape $D + [4,25] \times [3,22]$ to occur in its $g$-preimage, by Lemma~\ref{lem:KoynnosForcing}. By Lemma~\ref{lem:explodev2}, we force more: a non-\koynnos{} area inside a hollow patch of \koynnos{} expands horizontally under $g$, so under $\hat g$ the horizontal extent of the hole must shrink. We do not give a precise statement for this general fact, and only apply the lemma in the case of annuli. 

\begin{lemma}
  \label{lem:KoynnosAnnuli}
  Let $x$ be in the orbit of \koynnos{}. Suppose the following inequalities hold:
  \[ m_W - n_W \geq 30, m_E - n_E \geq 30, m_S - n_S \geq 27, m_N - n_N \geq 27. \]
  Denote $Q = x|\sannu{m_W}{m_E}{m_S}{m_N}{n_W}{n_E}{n_S}{n_N}$.
  If $n_E + n_W \geq 2$, then
  \begin{align*}
    \hat g(Q) \geq x|\annu{m_W-4}{m_E-4}{m_S-3}{m_N-4}{n_W-1}{n_E-1}{n_S+4}{n_N+3}
  \end{align*}
  while if $n_E+n_W \in \{0,1\}$ we have
  \[ \hat g(Q) \geq x|[-(m_W-4),m_E-4]\times[-(m_S-3),m_N-4] \]
\end{lemma}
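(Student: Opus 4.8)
The plan is to force cells into $\hat g(Q)$ from two independent sources and take the union of what they force; this is legitimate because $\hat g(Q)$ records every cell that has a constant value over all $g$-preimages of all points of $[Q]$, so $\hat g(Q)\geq x|D_1$ and $\hat g(Q)\geq x|D_2$ together give $\hat g(Q)\geq x|(D_1\cup D_2)$. The first source is Lemma~\ref{lem:KoynnosForcing} applied at every translate whose $30\times27$ window fits inside the known part of $Q$; the second is a single application of Lemma~\ref{lem:explodev2}, which trims the horizontal extent of the hole by one column.

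For the first source I would combine Lemma~\ref{lem:KoynnosForcing} with the shift-equivariance and monotonicity of $\hat g$ from Lemma~\ref{lem:HatBasics}: with $W=[-12,17]\times[-12,14]$ and $W'=[-8,13]\times[-9,10]$, every translate $\vec w$ with $\vec w+W\subseteq\dom(Q)$ yields $\hat g(Q)\geq\hat g(Q|(\vec w+W))=\hat g(x|(\vec w+W))\geq x|(\vec w+W')$. A routine computation with these intervals — in which the hypotheses $m_W-n_W,m_E-n_E\geq 30$ and $m_S-n_S,m_N-n_N\geq 27$ are exactly what makes the resulting set a genuine annulus — shows that $\hat g(Q)\geq x|D_1$ for
\[
  D_1=\annu{m_W-4}{m_E-4}{m_S-3}{m_N-4}{n_W+4}{n_E+4}{n_S+4}{n_N+3}.
\]
This $D_1$ already supplies the full outer rectangle of the claim and the full vertical extent $[-(n_S+4),n_N+3]$ of its hole; what is missing is to shrink the hole horizontally from $[-(n_W+4),n_E+4]$ down to $[-(n_W-1),n_E-1]$.

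For that I would invoke Lemma~\ref{lem:explodev2} with the rectangle $R=[-(n_W+17),n_E+12]\times[-(n_S+14),n_N+12]$ and its thickness-$1$ surrounding annulus $A$. The step I expect to be the main obstacle is the bookkeeping verifying that $R$ is chosen just large enough that $\vec u+W\subseteq\dom(Q)$ for every $\vec u\in A$: each boundary cell of $A$ must sit far enough from the hole of $Q$ that its $30\times27$ window clears the hole, yet close enough that the window still fits inside the outer rectangle, and these two requirements sum to precisely the bounds $30$ (horizontally) and $27$ (vertically) in the hypotheses. Granting this, the first source gives $\hat g(Q)\geq x|A$, and $Q$ agrees with $x$ on $A$. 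Now fix $z\in[Q]$, $y\in g^{-1}(z)$, and a finite pattern $p=y|R'$ on a rectangle $R'\supseteq R\cup A$ large enough that $g(p)$ is defined on $R\cup A$; then $p|A=g(p)|A=x|A$, and since $z$ agrees with $x$ off the hole we get $\diff(x,g(p))\cap R\subseteq[-n_W,n_E]\times\Z$ (translating $x$ to $x^P$ first, which is harmless since the \koynnos{} orbit is a single shift-orbit). Lemma~\ref{lem:explodev2} then yields, when $n_E+n_W\geq 2$, that $\diff(x,y)\cap R\subseteq[-(n_W-1),n_E-1]\times\Z$; as $y$ was an arbitrary preimage of an arbitrary point of $[Q]$, this gives $\hat g(Q)\geq x|D_2$ with $D_2=R\setminus([-(n_W-1),n_E-1]\times\Z)$.

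It remains to check that $D_1\cup D_2$ contains the target annulus of the statement. A cell of it whose row lies outside $[-(n_S+4),n_N+3]$, or whose column lies outside $[-(n_W+4),n_E+4]$, misses the hole of $D_1$ and so lies in $D_1$; a cell whose column merely lies outside $[-(n_W-1),n_E-1]$ lies in $D_2$, once one notes that $R$ contains the hole of $D_1$, so any cell of the target annulus outside $R$ is already in $D_1$. Hence the only uncovered cells are those with column in $[-(n_W-1),n_E-1]$ and row in $[-(n_S+4),n_N+3]$ — precisely those excised in the statement — which proves the first inequality. For the second, when $n_E+n_W\in\{0,1\}$ the same application of Lemma~\ref{lem:explodev2} gives its degenerate conclusion $x|R=p|R$, so $y=x$ on all of $R$; since $R$ contains the hole of $D_1$, the set $D_1\cup R$ is the whole rectangle $[-(m_W-4),m_E-4]\times[-(m_S-3),m_N-4]$, giving $\hat g(Q)\geq x|[-(m_W-4),m_E-4]\times[-(m_S-3),m_N-4]$.
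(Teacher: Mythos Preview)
Your proof is correct and follows the same two-step strategy as the paper: force the thinner annulus $D_1$ via Lemma~\ref{lem:KoynnosForcing}, then shrink the hole horizontally via a single application of Lemma~\ref{lem:explodev2}. The paper sidesteps the bookkeeping you anticipate by applying Lemma~\ref{lem:explodev2} directly to the hole of $D_1$, i.e.\ with rectangle $[-(n_W+4),n_E+4]\times[-(n_S+4),n_N+3]$, whose surrounding thickness-$1$ annulus $\sannu{n_W+5}{n_E+5}{n_S+5}{n_N+4}{n_W+4}{n_E+4}{n_S+4}{n_N+3}$ is trivially contained in $D_1$; this makes both hypotheses $p|A=g(p)|A=x|A$ immediate without having to check that any $30\times27$ windows fit.
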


One may consider the latter case a special case of the former: there too, the hole shrinks horizontally by two steps, and since its width is at most two it disappears.

\begin{proof}
  The inequalities simply state that the annulus $Q$ is thick enough that each of its cells is part of a $30 \times 27$ rectangle contained in $Q$. From Lemma~\ref{lem:KoynnosForcing}, we get $\hat g(Q) \geq x|A$, where $A = \sannu{m_W-4}{m_E-4}{m_S-3}{m_N-4}{n_W+4}{n_E+4}{n_S+4}{n_N+3}$ is a slightly thinner annulus.
  If there is no $g$-preimage for $Q$, then $\hat g(Q) = \top$ and we are done.
  Suppose then that it has a preimage $R$.
  Since $R \geq \hat g(Q) \geq x|A$, both $Q$ and $R$ agree with $x$ on the thickness-$1$ annulus $\sannu{n_W+5}{n_E+5}{n_S+5}{n_N+4}{n_W+4}{n_E+4}{n_S+4}{n_N+3} \subset A$.
  Lemma~\ref{lem:explodev2} implies that $R$ agrees with $x$ on $\sannu{m_W-4}{m_E-4}{m_S-3}{m_N-4}{n_W-1}{n_E-1}{n_S+4}{n_N+3}$, as claimed.
\end{proof}

We now prove Theorem~\ref{thm:polytime}, and thus give the first proof of Theorem~\ref{thm:Unstable}. In fact, we give a simple formula that produces configurations that have an $n$th preimage, but no $(n+1)$st one.

\begin{lemma}
Let $x$ be in the orbit of \koynnos{}, and suppose $\emptyset \neq \diff(y,x) \subset B = [0,a] \times [0,n]$ where $a \in \{0,1\}$. Then
\[ p = g^k(y)|[-30-6k, 30+a+6k] \times [-27-8k, 27+n+8k] \] 
appears in the $k$th image of $g$, but not in the $(k+1)$st.
\end{lemma}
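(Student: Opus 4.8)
The plan is to verify the two assertions separately, and the first is immediate: by definition $p$ is the restriction of the genuine configuration $g^k(y)$ to the rectangle $D := [-30-6k,\,30+a+6k]\times[-27-8k,\,27+n+8k]$, so $p$ occurs in $g^k(\{0,1\}^{\Z^2})$. Before attacking the second assertion I would record two facts about $p$. Writing $C := [-k,a+k]\times[-k,n+k]$: since $\diff(y,x)\subset B\subset C$ and $g$ has neighborhood $N=[-1,1]^2$, iterating gives $\diff(g^k(y),x)\subset B+kN\subset C$, and as $C\subset D$ we get $\diff(p,x|D)\subset C$; moreover, by the corollary of Lemma~\ref{lem:KoynnosForcing} that finite perturbations of \koynnos{} can never be erased, $\diff(p,x|D)=\diff(g^k(y),x)\neq\emptyset$. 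So $p$ coincides with $x$ on the thick \koynnos{} annulus $D\setminus C$ but $p\neq x|D$.

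Now suppose toward a contradiction that $p$ occurs in $g^{k+1}(\{0,1\}^{\Z^2})$; after absorbing a shift into the witness we may take a configuration $w$ with $g^{k+1}(w)|D=p$. The core step is to peel off the $k+1$ successive preimages $g^k(w),g^{k-1}(w),\dots,g^0(w)=w$ using Lemma~\ref{lem:KoynnosAnnuli}. The configuration $g^{k+1}(w)$ agrees with $x$ on the annulus $D\setminus C$, whose outer parameters are $(30+6k,\,30+a+6k,\,27+8k,\,27+n+8k)$ and whose hole parameters are $(k,\,a+k,\,k,\,n+k)$, so its horizontal hole-size is $n_W+n_E=2k+a$. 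As long as this is $\geq 2$, each application of Lemma~\ref{lem:KoynnosAnnuli} (the wide-hole case) produces agreement of the next preimage with $x$ on an annulus whose outer parameters have shrunk by $(4,4,3,4)$ and whose hole-size has dropped by $2$; thus after $j$ steps $g^{k+1-j}(w)$ agrees with $x$ on an annulus with outer parameters $(30+6k-4j,\,30+a+6k-4j,\,27+8k-3j,\,27+n+8k-4j)$ and hole-size $2k+a-2j$. One checks the hypotheses stay valid throughout: $m_W-n_W=m_E-n_E=30+5k-3j\geq 30$ and $m_S-n_S=m_N-n_N=27+7(k-j)\geq 27$ for all $j\leq k$. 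The wide-hole case applies for $j=0,\dots,k-1$ (vacuously when $k=0$), leaving $g^1(w)$ in agreement with $x$ on an annulus with outer parameters $(30+2k,\,30+a+2k,\,27+5k,\,27+n+4k)$ and hole-size $a\in\{0,1\}$; one last application of Lemma~\ref{lem:KoynnosAnnuli}, now in the small-hole case, closes the hole entirely, so $w=g^0(w)$ agrees with $x$ on the full rectangle $R:=[-(26+2k),\,26+a+2k]\times[-(24+5k),\,23+n+4k]$.

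To close the argument I would push this forward: since $g(x)=x$ and $g$ has neighborhood $N$, the configuration $g^{k+1}(w)$ agrees with $x$ on $R$ eroded by $k+1$, i.e.\ on $[-(25+k),\,25+a+k]\times[-(23+4k),\,22+n+3k]$, and a one-line comparison of endpoints shows this rectangle contains $C$. Hence $p=g^{k+1}(w)|D$ agrees with $x$ on $C$, and together with the agreement on $D\setminus C$ noted above this forces $p=x|D$, contradicting $\diff(p,x|D)\neq\emptyset$. Therefore $p$ does not occur in $g^{k+1}(\{0,1\}^{\Z^2})$.

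I expect the only real difficulty to be the bookkeeping: carrying the eight annulus parameters through the $k+1$ applications of Lemma~\ref{lem:KoynnosAnnuli}, checking its hypotheses at every level, and confirming that the padding constants $30+6k$ and $27+8k$ are tuned exactly so that (i) the horizontal hole has shrunk to width at most $2$ precisely at the $(k+1)$st step, triggering the hole-closing case, and (ii) eroding the resulting rectangle $R$ by $k+1$ still engulfs the perturbation region $C$. None of this is deep, but the constants must line up, and it is worth stating the $k=0$ instance explicitly as a sanity check (there the single preimage step is already the small-hole case, and the same arithmetic survives).
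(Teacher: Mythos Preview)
Your argument is correct and follows the same skeleton as the paper's proof: iterate Lemma~\ref{lem:KoynnosAnnuli} exactly $k+1$ times on the \koynnos{} annulus surrounding the perturbation, with the same parameter bookkeeping, until the hole closes, and then derive a contradiction by pushing forward $k+1$ steps. The paper phrases the induction in terms of the dual map, tracking $\hat g^j(q)$ for the annular pattern $q$ and invoking Lemma~\ref{lem:HatBasics} for the final contradiction, whereas you work directly with a concrete hypothetical preimage $w$ and track the agreement of $g^{k+1-j}(w)$ with $x$; the two viewpoints are equivalent. One small simplification in your version: for the fact that $\diff(g^k(y),x)\neq\emptyset$ the paper appeals to Lemma~\ref{lem:explodev2}, while you use only the self-enforcing property of \koynnos{} (the corollary of Lemma~\ref{lem:KoynnosForcing}), which suffices since you have separately confined the difference to $C$.
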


\begin{proof}
By definition, $p$ appears in the $k$th image of $g$. It suffices to show its $\hat g^{k+1}$-image is $\top$. Namely, we then have $\widehat{g^{k+1}}(p) \geq \hat g^{k+1}(p) = \top$ by Lemma~\ref{lem:HatBasics}, which means precisely that $p$ has no $g^{k+1}$-preimage.

Let $q$ be the restriction of $p$ to
\begin{align*}
\annu{30+6k}{30+a+6k}{27+8k}{27+n+8k}{k}{a+k}{k}{n+k}.
\end{align*}
Observe that $q$ agrees with $x$ because $g$ has radius $1$, so by Lemma~\ref{lem:KoynnosAnnuli} and induction, we can deduce that
\[ \hat g^j(q) \geq x|\annu{30+6k-4j}{30+a+6k-4j}{27+8k-3j}{27+n+8k-4j}{k-j}{a+k-j}{k+4j}{n+k+3j} \]
for all $j \leq k$. This is because for $j \leq k-1$ we have
\begin{align*}
  30+6k-4j - (k-j) \geq {} & 30, & 30+a+6k-4j - (a+k-j) \geq {} & 30, \\
  27+8k-3j - (k+4j) \geq {} & 27, & 27+n+8k-4j - (n+k+3j) \geq {} & 27,
\end{align*}
and thus we can inductively apply the lemma. But in
\[ \hat g^k(q) \geq x|\annu{30+2k}{30+a+2k}{27+5k}{27+n+4k}{0}{a}{5k}{n+4k} \]
the annulus still has sufficient thickness (i.e.\ the inequalities still hold for $j = k$), so we can apply the second case of the lemma to get
\[ \hat g^{k+1}(q) \geq x|[-(26+2k),26+2k]\times[-(24+5k),23+n+4k] = r. \]

By Lemma~\ref{lem:HatBasics} we have $\hat g^{k+1}(p) \geq \hat g^{k+1}(q) \geq r$, and by the same lemma we either have $\hat g^{k+1}(p) = \top$ (as desired), or
\[ g^{k+1}(\hat g^{k+1}(p)) \leq g^{k+1}(\widehat{g^{k+1}}(p)) \parallel p. \]
But since the speed of light is $1$ and $x$ is a fixed point, we have
\[ g^{k+1}(r) \geq x|[-(25+k),25+k]\times[-(23+4k),22+n+3k] \geq x|[-k,a+k] \times [-k,n+k] \]
and $x|[-k,a+k] \times [-k,n+k] \parallel p$.
But Lemma~\ref{lem:explodev2} applied $k$ times to $y$ implies that $\diff(x, g^k(y))$, and thus $\diff(x, p)$, intersects $[-k,a+k] \times [-k,n+k]$, a contradiction.
Thus we indeed must have $\hat g^{k+1}(p) = \top$.
\end{proof}

\subsection{\Kynnos{}}
\label{sec:kynnos}

Denote by
\[
  Q =
  \begin{array}{cccccc}
    0 & 0 & 1 & 1 & 0 & 1 \\
    0 & 0 & 1 & 0 & 1 & 1 \\
    1 & 1 & 0 & 0 & 0 & 0 \\
    0 & 1 & 0 & 1 & 1 & 0 \\
    1 & 0 & 0 & 1 & 1 & 0 \\
    1 & 1 & 0 & 0 & 0 & 0 \\
  \end{array}
\]
the fundamental domain of \kynnos, and by $x^Q \in \{0,1\}^{\Z^2}$ the associated $6 \times 6$-periodic configuration with $g(x^Q) = x^Q$.
The following lemma states that it contains a self-enforcing patch (it is essentially a more precise stement of Theorem~\ref{thm:SelfEnforcing}).

\begin{lemma}
  \label{lem:SelfEnforcing}
  There is a finite set $D \subset \Z^2$ such that $p = x^Q|D$ satisfies $\hat{g}(p) = p$.
  Furthermore, there is a finite-support configuration $x \in [p]$ with $g(x) = x$.
\end{lemma}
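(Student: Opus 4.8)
The statement splits into two tasks: exhibiting a finite self-enforcing patch of \kynnos{}, and embedding it in a finite still life. For the first task the plan is to feed a large rectangular patch $x^Q|M$ of \kynnos{} to the maximal-self-enforcing-subpattern procedure (Algorithm~\ref{alg:Patch}). Such a patch is locally fixed, since $g(x^Q) = x^Q$ gives $q = x^Q|(M+N) $ with $q|M = g(q)|M = x^Q|M$, so by the preceding lemma the algorithm terminates on the unique maximal self-enforcing subpattern $p = x^Q|D$, and $[p] \le \hat g([p])$ is automatic. Concretely this is the iteration $p \mapsto \hat g(p)|M$ run to a fixed point; I would argue (and confirm by computer) that it stabilizes at a nonempty $D$. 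The conceptual reason this should happen is that $x^Q$ is a \emph{self-enforcing agar}: this is established by the semi-algorithm behind Lemma~\ref{lem:SEARE} (Remark~\ref{rem:SEARE}), which uses the SAT solver to produce a finite certificate $\hat g(x^Q|D_0) \ge x^Q|B$ for a fundamental domain $B$ while a parallel search rules out any other periodic preimage; shift-invariance of $\hat g$ (Lemma~\ref{lem:HatBasics}) then propagates this forcing inward on any large $M$-patch, so the fixed point of the iteration is not empty.

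For the equality $\hat g(p) = p$ rather than merely $[p]\le\hat g([p])$: since $\hat g(\FF_M)\subseteq\FF_{M+N}\cup\{\top\}$ with $N=[-1,1]^2$, the only extra thing to verify is that $\hat g(p)$ forces no cell of $(D+N)\setminus D$. If it forced some $\vec w$ there, the value would be $x^Q_{\vec w}$ (because $x^Q\in g^{-1}([p])$), hence $\hat g(p) \ge x^Q|(D\cup\{\vec w\}) =: p'$, and by monotonicity $\hat g(p') \ge \hat g(p) \ge p'$, so $p'$ is a strictly larger self-enforcing subpattern of $x^Q|M$ — a contradiction with maximality of $p$, provided $M$ was taken large enough that $D+N\subseteq M$ (which I would simply observe holds once $M$ contains a few extra periods). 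Alternatively, and this is what I would actually put in the write-up, one exhibits the explicit $D$ found by the search and verifies $\hat g(x^Q|D) = x^Q|D$ directly; by Lemma~\ref{lem:ComputableHat} this is an $\FP^{\NP}$ computation (one preimage query, then one query per cell of $D+N$), discharged by the SAT solver.

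For the finite still life the plan is a dedicated SAT search: take a large rectangle $R\supseteq D$, place \kynnos{} on its interior, set the exterior to $0$, and ask the solver for values of a boundary collar of $R$ (and, if needed, of the outermost \kynnos{} layer) making the whole pattern a fixed point of $g$ — i.e.\ every cell, collar included, obeys the birth/survival rule with respect to its actual neighbours. This is precisely the assertion that \kynnos{} \emph{can be stabilized from the inside}, and I expect it to be the main obstacle, both conceptually and computationally: for \koynnos{} the analogous search is hopeless, since by Lemma~\ref{lem:explodev2} any finite perturbation of the agar spreads at the speed of light, so the construction genuinely relies on the specific combinatorics of \kynnos{}. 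Once the solver returns such an $x$, one checks $g(x)=x$ by a finite computation; enlarging $R$ by whole periods of \kynnos{} leaves the collar valid, so $R$ may be taken large enough that $D$ lies inside the \kynnos{} interior, giving $x|D = p$ and hence a finite-support configuration $x \in [p]$ with $g(x)=x$, as required.
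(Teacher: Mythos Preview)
Your approach matches the paper's: run Algorithm~\ref{alg:Patch} on a large rectangular patch of \kynnos{} (the paper used $70\times70$), exhibit the resulting explicit $D$ (a $28\times22$ rectangle with eight cells removed from each corner), and verify $\hat g(x^Q|D)=x^Q|D$ by SAT; then exhibit the finite still life of Figure~\ref{fig:SelfEnforcing} found by a separate SAT search and verify $g(x)=x$. Your maximality argument for the \emph{equality} $\hat g(p)=p$ (as opposed to mere $[p]\le\hat g([p])$) is a clean addition the paper does not spell out.

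One point to retract, however: your ``conceptual reason'' for non-emptiness is not valid. The fact that $x^Q$ is a self-enforcing agar, witnessed by a certificate $\hat g(x^Q|D_0)\ge x^Q|B$, does \emph{not} imply that Algorithm~\ref{alg:Patch} stabilises at a nonempty patch. Shift-invariance only tells you that a large patch forces a somewhat smaller central patch; under the iteration $p\mapsto\hat g(p)|M$ this can perfectly well shrink to the empty pattern. Indeed the paper reports that \kynnos{} was the \emph{first} self-enforcing agar among those searched for which the algorithm returned something nonempty, so the other self-enforcing agars (including \koynnos{}) are counterexamples to your heuristic. The non-emptiness is a genuine computational fact about \kynnos{} with no shortcut; drop the heuristic and let the explicit $D$ carry the proof, as you already suggest in your ``alternatively'' clause.
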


The patch $p$ is shaped like a $22 \times 28$ rectangle with 8 cells missing from each corner.
It is depicted in Figure~\ref{fig:SelfEnforcing}, together with the still life $x$ containing it.
The patch was found by 
simply applying the function of Algorithm~\ref{alg:Patch} to the $70 \times 70$-patches of the agars we found during our searches. 
\Kynnos{} was the first configuration that yielded a nonempty self-enforcing patch, which we then optimized to its current size.
This lemma directly implies Theorem~\ref{thm:SelfEnforcing}, and almost directly Theorem~\ref{thm:NotChainTrans}.

\begin{proof}[Proof of Theorem~\ref{thm:NotChainTrans}]
  Let $y$ be the finite-support configuration obtained by taking $x$ from the previous lemma and adding a glider that is just about to hit the \kynnos{} patch. It can be checked by simulation that the patch can be annihilated this way. Observe that $y$ is in the limit set $\Omega(g)$: simply shoot the glider from infinity. If $\epsilon > 0$ is very small, in any $\epsilon$-chain starting from $y$ we see the patch destroyed. It is impossible to reinstate it, as the existence of a first step in the chain where it appears again contradicts Lemma~\ref{lem:SelfEnforcing}.
\end{proof}


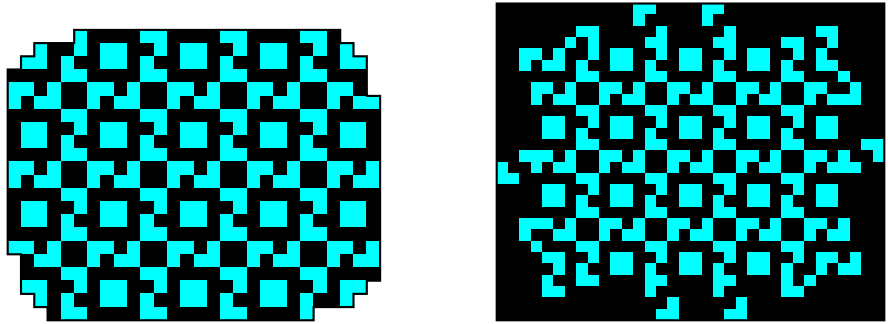
\begin{figure}[htp]
\begin{center}
  \pgfplotstableread{selfenforcingpatch.cvs}{\matrixfile}
  \begin{subfigure}[b]{.45\linewidth}
  \begin{center}\begin{tikzpicture}[scale=0.175]
      \drawit[0]{} 
      \draw[thick,black] (3,-21) -- ++(20,0) -- ++(0,1) -- ++(3,0) -- ++(0,1) -- ++(1,0) -- ++(0,1) -- ++(1,0) -- ++(0,14) -- ++(-1,0) -- ++(0,3) -- ++(-1,0) -- ++(0,1) -- ++(-1,0) -- ++(0,1) -- ++(-20,0) -- ++(0,-1) -- ++(-3,0) -- ++(0,-1) -- ++(-1,0) -- ++(0,-1) -- ++(-1,0) -- ++(0,-14) -- ++(1,0) -- ++(0,-3) -- ++(1,0) -- ++(0,-1) -- ++(1,0) -- cycle;
  \end{tikzpicture}\end{center}
  \vspace{-0.3cm}
  \end{subfigure}
  \pgfplotstableread{selfenforcingstill.cvs}{\matrixfile}
  \begin{subfigure}[b]{.45\linewidth}
  \begin{center}\begin{tikzpicture}[scale=0.15]
  \drawit{}
  \end{tikzpicture}\end{center}
  \vspace{-0.3cm}
  \end{subfigure}
\end{center}
  \caption{A self-enforcing patch of \kynnos{} and a still life containing it. The still life has the minimal number of live cells, 306, of any still life containing the patch. The number was minimized by Oscar Cunningham. \cite{Cu22forum}}
  \label{fig:SelfEnforcing}
\end{figure}

As stated, \kynnos{} can be stabilized from both inside and outside.
Figure~\ref{fig:KynnosHole} shows a still life configuration containing a ``ring'' of \kynnos{} with a hole of 0-cells inside it.
From the figure it is easy to deduce the existence of such rings of arbitrary size and thickness.

Note that if the ring is at least $22$ cells thick, then its interior is completely surrounded by a ring-shaped self-enforcing pattern consisting of translated, rotated and partially overlapping copies of the $28 \times 22$ self-enforcing patch, through which no information can pass without destroying it forever.
If we then replace the empty cells inside the ring with an arbitrary pattern, the resulting finite pattern $P$ occurs in the limit set $\Omega(g)$ if and only if the interior pattern evolves periodically under $g$.
Namely, if the pattern occurs in $\Omega(g)$, then it has an infinite sequence of preimages, each of which must contain the self-enforcing \kynnos{} ring.
The interior has a finite number of possible contents ($2^m$ for an interior of $m$ cells), so it must evolve into a periodic cycle, of which $P$ is part.
From this idea, and some engineering with gadgets found by other researchers and Life enthusiasts, we will obtain Theorems~\ref{thm:rateofgrowth}, \ref{thm:pspacehard} and \ref{thm:nonsofic}.
The first one was essentially proved by Adam Goucher \cite{Go22forum}.
Note the difference between these rings and the \koynnos{} annuli of Section~\ref{sec:koynnos}: the latter force strictly smaller versions of themselves in their preimages, and do not admit nontrivial periodically evolving interiors.

\begin{figure}[htp]
  \begin{center}
    \pgfplotstableread{kynnoshole.cvs}{\matrixfile}
    \begin{tikzpicture}[scale=0.1]
      \drawit{}
    \end{tikzpicture}
  \end{center}
  \caption{A stable ring of \kynnos{}.}
  \label{fig:KynnosHole}
\end{figure}

\begin{proof}[Proof of Theorem~\ref{thm:rateofgrowth}]
  Given integers $k, m \geq 1$ with $k$ odd, we construct a configuration $x \in \{0,1\}^{\Z^2}$ such that the support of $g^n(x)$ is contained in $[0,32k+73] \times [0,46m]$ for all $n \geq 0$, and $g^{48 \cdot 4^{(2k+1)m}}(x)$ is not $g$-periodic.
  When the support of $g^{48 \cdot 4^{(2k+1)m}}(x)$ is surrounded by a \kynnos{} ring of width 22, the resulting pattern has a $48 \cdot 4^{(2k+1)m}$th preimage, but not arbitrarily old preimages.
  If we choose $k = 23 s$ and $m = 16 s$ for some $s \geq 0$, the resulting pattern has size $(736 s + O(1)) \times (736 s + O(1))$, and the chain of preimages has length $48 \cdot 4^{736s^2+16s}$.
  Choosing $s = n/736 - O(1)$ yields the lower bound, and the upper bound is the trivial one (even ignoring the fact we do not modify the boundaries).

  The main components of $x$ are the \emph{period-48 glider gun} \cite{LW-P48G22}, which produces one glider every 48 time steps, and the \emph{quadri-snark} \cite{LW-QS22}, which emits one glider at a 90 degree angle for every 4 gliders it receives.
  The support of $x$ consists of a single period-48 gun aimed at a sequence of $(2k+1) m$ quadri-snarks, each of which receives the gliders the previous one emits.
  They effectively implement a quaternary counter with values in $[0, 4^{(2k+1) m}-1]$.
  The glider emitted by the final quasi-snark will collide with the \kynnos{} ring, ensuring that the pattern right before the impact does not occur in the limit set $\Omega(g)$, but has a chain of preimages of length at least $48 \cdot 4^{(2k+1) m}$.
  An example pattern and a schematic for $m = n = 2$ are given in Figure~\ref{fig:rateofgrowth}.
  It is easy to extrapolate to arbitrary $m, n \geq 1$ from the figure.
\end{proof}

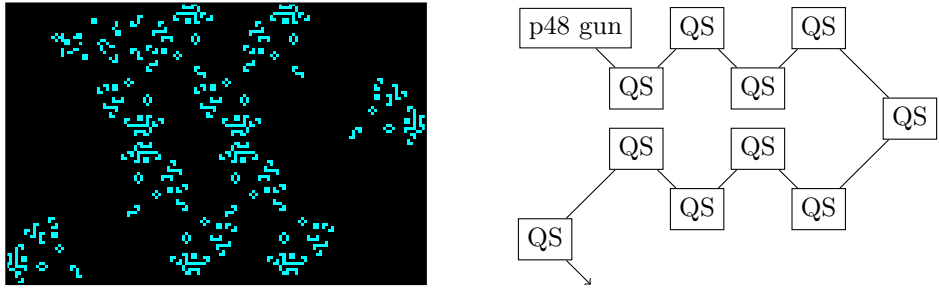
\begin{figure}[htp]
  \begin{center}
    \pgfplotstableread{quadrisnarks.cvs}{\matrixfile}
    \begin{subfigure}[b]{.45\linewidth}
      \begin{center}\begin{tikzpicture}[scale=0.04]
        \drawit{}
      \end{tikzpicture}\end{center}
  \end{subfigure}
  \;\;
  \begin{subfigure}[b]{.45\linewidth}
      \begin{center}\begin{tikzpicture}[scale=0.8]
        \node[draw,rectangle] (gun) at (0,0) {p48 gun};

        \foreach \x/\y/\n in {1/-1/a,2/0/b,3/-1/c,4/0/d,5.5/-1.5/e,4/-3/f,3/-2/g,2/-3/h,1/-2/i,-0.5/-3.5/j}{
          \node[draw,rectangle] (s\n) at (\x,\y) {QS};
        }
        \draw [->] (gun) -- (sa) -- (sb) -- (sc) -- (sd) -- (se) -- (sf) -- (sg) -- (sh) -- (si) -- (sj) -- ++(0.75,-0.75);
      \end{tikzpicture}\end{center}
    \end{subfigure}
  \end{center}
  \caption{A configuration corresponding to $k = m = 2$ in the proof of Theorem~\ref{thm:rateofgrowth}.}
  \label{fig:rateofgrowth}
\end{figure}

To implement more complex Life patterns with desired properties, we use the fact that Life is \emph{intrinsically universal}, that is, capable of simulating all $\Z^2$ cellular automata.
Formally, for any other CA $f : \Sigma^{\Z^2} \to \Sigma^{\Z^2}$, there are numbers $K, T \geq 1$ and an injective function $\tau : \Sigma \to \{0,1\}^{K \times K}$ such that for all configurations $x \in \Sigma^{\Z^2}$ we have $g^T(\tau(x)) = \tau(f(x))$, where $\tau$ is applied cellwise in the natural way.
We use the simulation technique of \cite{DuRo99}, which allow us to easily simulate patterns with \emph{fixed boundary conditions}.
This means that any rectangular pattern $R \in \Sigma^{a \times b}$ can be simulated by a finite-support configuration of $g$ in such a way that simulated cells whose $f$-neighborhood is not completely contained in the rectangle $[0,a-1] \times [0,b-1]$ are forced to retain their value.

\begin{proof}[Proof of Theorem~\ref{thm:pspacehard}]
  Let $L \subset \{0,1\}^*$ be a PSPACE-hard language decidable in linear space, such as TQBF.
  Define a Turing machine $M$ as follows.
  Given input $w \in \{0,1\}^*$, $M$ determines whether $w \in L$ using at most $|w|$ additional tape cells and without modifying $w$.
  If $w \in L$, then it erases the additional tape cells and returns to its initial state, thus looping forever.
  If $w \notin L$, then $M$ stays in a rejecting state forever.
  We simulate $M$ by a cellular automaton $f$ in a standard way: each cell is either empty, or contains a tape symbol and possibly the state of the computation head.

  Next, we simulate the CA $f$ by $g$ as described above.
  Given a word $w \in \{0,1\}^*$, let $P(w)$ be the pattern corresponding to a simulated initial configuration of $M$ on input $w$ with $|w|$ additional tape cells and fixed boundary conditions, surrounded by a \kynnos{} ring.
  If $w \in L$, then $P(w)$ occurs in the limit set $\Omega(g)$, since it can be completed into a $g$-periodic configuration in which the simulated $M$ repeatedly computes $w \in L$.
  If $w \notin L$, then $P(w)$ does not occur in $\Omega(g)$, since the interior of the ring eventually evolves into a simulated configuration with $M$ in a rejecting state, never returning to $P(w)$.
\end{proof}

Extending $P(w)$ by zeroes on all sides (resp.\ repeating it periodically), we obtain that it is PSPACE-hard whether a given finite-support configuration (resp.\ periodic configuration) appears in the limit set.

\begin{proof}[Proof of Theorem~\ref{thm:pspacehardreach}]
  Let $L$ be as in the previous proof, and let $M$ be a Turing machine that, on input $w \in \{0,1\}^*$, decides $w \in L$ using no additional tape cells.
  Then $M$ erases the entire tape and enters an accepting or rejecting state depending on the result of the computation.
  We simulate $M$ by $g$ as in the previous proof.
  Given $w \in \{0,1\}^*$, let $p$ be the pattern corresponding to a tape of $M$ containing $w$ and an initial state, and $q$ the one corresponding to $|w|$ blank tape cells and an accepting state of $M$, both surrounded by a ring of \kynnos{} of the same dimensions.
  Then $q$ is reachable from $p$ if and only if $w \in L$: if $q$ is to be reached, the ring of $p$ must stay intact, enclosing a correct simulation of $M$.
\end{proof}

Of course, again by extending the resulting patterns by 0-cells (resp.\ repeating them periodically), we obtain PSPACE-hardness of reachability between two given finite-support (resp.\ periodic) configurations, i.e.\ given the full descriptions of two configurations $x, y \in \{0,1\}^{\Z^2}$, the question of whether $g^n(x) = y$ for some $n \geq 0$. However, this reachability problem is in fact even $\Sigma^0_1$-complete (resp.\ PSPACE-complete) directly by intrinsic universality. For the case of finite configurations, one needs a variant of intrinsic universality where the zero state of an arbitrary cellular automaton is represented by an all-zero pattern; such a variant was proved in \cite{Goucher0E0P}.

\begin{proof}[Proof of Theorem~\ref{thm:nonsofic}]
  Let $M$ be a two-dimensional Turing machine whose tape alphabet has two distinguished values, denoted $a$ and $b$.
  When $M$ is initialized on a rectangular tape containing only $a$s and $b$s, it repeatedly checks whether its left and right halves are equal, destroying the tape if they are not.
  We again simulate $M$ by a CA $f$, and then $f$ by $g$.
  Then a simulated rectangular tape with the head of $M$ in its initial state, surrounded by a \kynnos{} ring, is in $\Omega(g)$ if and only if the two halves of the tape are equal.

  It was proved in \cite{KaMa13} that for all sofic shifts $X \subset S^{\Z^2}$ there exists an integer $C > 1$ with the following property.
  For all $n \geq 1$ and configurations $x^1, \ldots, x^{C^n} \in X$, there exist $i \neq j$ such that the configuration $y = (x^i|[0,n-1]^2) \sqcup (x^j|\Z^2 \setminus [0,n-1]^2)$ is in $X$.
  Assuming for a contradiction that $\Omega(g)$ is sofic, consider the configurations $x(P) \in \Omega(g)$ for $P \in \{a,b\}^{n \times n}$ that contain a \kynnos{} ring and a simulated tape of $M$ with two identical $P$-halves.
  Based on the above, when $n$ is large enough that $2^{n^2} > C^{K n}$, we can swap the right half of one $x(P)$ with that of another to obtain a configuration $y \in \Omega(g)$ containing a simulated tape of $M$ with unequal halves inside a \kynnos{} ring, a contradiction.
\end{proof}

We remark that a weaker version of Theorem~\ref{thm:rateofgrowth} (where $1/368$ is replaced by a much smaller, or even implicit, constant) could also be proved by intrinsic universality.

\subsection{The marching band}

Let $h = g^2$. Denote by
\[
  R =
  \begin{array}{cccccccc}
  1 & 0 & 0 & 0 & 0 & 0 & 1 & 0 \\
  1 & 1 & 0 & 0 & 1 & 1 & 0 & 0 \\
  1 & 1 & 0 & 0 & 1 & 1 & 0 & 0 \\
  0 & 0 & 1 & 0 & 1 & 0 & 0 & 0 \\
  \end{array}
\]
the fundamental domain of the marching band, and by $x^R \in \{0,1\}^{\Z^2}$ the associated $8 \times 4$-periodic configuration with $h(x^R) = x^R$. The following is proved just like Lemma~\ref{lem:KoynnosForcing}.
Note that the forced region extends outside the original pattern.

\begin{lemma}
  \label{lem:marching}
  Let $x$ be in the spatial orbit of the marching band. Then $\hat h(x|[0,47] \times [0,43]) \geq x|[10, 29] \times [-1, 44]$.
\end{lemma}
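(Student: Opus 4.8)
The plan is to prove this exactly as Lemma~\ref{lem:KoynnosForcing} was proved: by a direct finite computation with the SAT-based dual-map algorithm, using the shift-equivariance of $\hat h$ to reduce to finitely many cases.

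First, since $x^R$ is $8 \times 4$-periodic and $h = g^2$ commutes with the shifts, $\hat h$ commutes with every $\sigma_{\vec v}$ by Lemma~\ref{lem:HatBasics}. Any configuration $x$ in the spatial orbit of the marching band has the form $\sigma_{\vec v}(x^R)$, and both $\hat h(x|[0,47]\times[0,43])$ and $x|[10,29]\times[-1,44]$ transform covariantly when $\vec v$ is shifted; moreover shifting $\vec v$ by a period of $x^R$ does not change $\sigma_{\vec v}(x^R)$. Hence it suffices to verify the claimed inequality for the $32$ representative shifts $\vec v \in [0,7] \times [0,3]$.

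For each such $\vec v$, I would run Algorithm~\ref{alg:Hat} (\textsc{HatCA}) on the CA $h = g^2$ and the input pattern $\sigma_{\vec v}(x^R)|[0,47]\times[0,43]$. By Lemma~\ref{lem:ComputableHat} this correctly computes $\hat h$ of the pattern in $\FP^{\NP}$: concretely, the SAT solver first produces a two-step preimage and then repeatedly produces preimages that differ from the previously found ones on shrinking cell sets, until the forced subpattern stabilizes. Since the minimal neighborhood of $g^2$ is $N = [-2,2]^2$, the output is a pattern on a subset of $[0,47]\times[0,43] + N = [-2,49]\times[-2,45]$, so it is consistent (and, as noted just before the statement, expected) that the forced region extends one row above and below the input window. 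For each $\vec v$, the output necessarily agrees with $\sigma_{\vec v}(x^R)$ wherever it is defined, being a forced subpattern; the only remaining point is to check that its domain contains $[10,29]\times[-1,44]$. Intersecting the $32$ domains and confirming they jointly cover this rectangle finishes the proof, and this is exactly what the scripts in \cite{Program} do.

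The computation is somewhat heavier than for \koynnos{}, since a preimage query is now a SAT instance spanning two Life generations, but modern solvers dispatch instances of this size without difficulty and there is no conceptual obstacle. The one piece of bookkeeping to be careful about — the only real difference from Lemma~\ref{lem:KoynnosForcing} — is that the forced region is not contained in the original $48\times 44$ window, so one must intersect the translated output domains directly rather than first restricting them to the input rectangle; once that is handled, the verification is purely mechanical.
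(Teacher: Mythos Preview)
Your proposal is correct and is precisely the approach the paper takes: the paper's proof consists of the single remark that the lemma ``is proved just like Lemma~\ref{lem:KoynnosForcing}'', i.e.\ by applying Algorithm~\ref{alg:Hat} to the finitely many shift representatives and verifying the forced domain. Your additional care about the forced region extending beyond the input window and about the $32$ representatives for the $8\times4$ period is exactly right.
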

 
 \begin{proof}[Proof of Theorem~\ref{thm:nogluing}]
 Let $x$ be in the orbit of $x^R$, and let $p = x|[-a,b]\times[-c,d]$. By the previous lemma, as long as $a+b \geq 48$ and $c+d \geq 44$ we have $\hat h(p) = x|[-(a-10),b-18]\times[-(c+1),d+1]$. Iterating this we get
 \[ \hat h^n(x|[-10n,18n+47]\times[0,43]) \geq x|[0,47]\times[-n,n+43]. \]

 Denote $S = [-10n,18n+47]\times[0,43]$.
 Let $P = x|S$ and $Q = \sigma_{\vec u}(x)|\vec v + S$ for some $\vec u \in \Z^2$ and $\vec v = (0,2n)$.
 Both patterns appear in the limit set of $g$, since they are extracted from a fixed point of $h = g^2$.
Observe that since the domains of $\hat h^n(P)$ and $\hat h^n(Q)$ intersect, we can pick the shift $\vec u$ so that one of the forced bits is different in some position in $\hat h^n(P)$ and $\hat h^n(Q)$, which clearly means $\hat h^n(P \sqcup Q) = \top$. 


Now, $P$ and $Q$ each fit inside a $29n \times 29n$ rectangle (if $n \geq 47$), and the patterns cannot be glued in the limit set with gluing distance at most $2n$, since the glued pattern should have an $n$th $h$-preimage. This gives the statement.
\end{proof}

\section{Chaotic conclusions}
\label{sec:Chaotic}

There are several definitions of topological chaos. We refer the reader to~\cite{Bl09} for a survey. Briefly, a system is called Auslander-Yorke chaotic if it is topologically transitive and is sensitive to initial conditions, and Devaney chaotic if it is Auslander-Yorke chaotic and additionally has dense periodic points. As far as we know, before our results it was open whether Game of Life exhibits these types of chaos on its limit set; the following corollary shows that it does not.

\begin{theorem}
The Game of Life restricted to its limit set is not topologically transitive, and does not have dense periodic points.
\end{theorem}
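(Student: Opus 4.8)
The plan is to deduce the statement as a formal corollary of Theorem~\ref{thm:NotChainTrans}: since $g$ restricted to $\Omega(g)$ is chain-wandering, it cannot be chain-nonwandering, and I will show that both topological transitivity (via surjectivity of $g|_{\Omega(g)}$) and density of periodic points would force chain-nonwandering for a continuous selfmap of a compact metric space. So the proof has three ingredients: (i) a general topological lemma about the chain recurrent set, (ii) surjectivity of $g|_{\Omega(g)}$, and (iii) the implication ``transitive $+$ surjective $\Rightarrow$ chain-transitive''.

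First I would record the elementary lemma that for continuous $f : X \to X$ with $X$ compact metric, the chain recurrent set $\mathrm{CR}(f) = \{x : \text{for all } \epsilon > 0 \text{ there is an } \epsilon\text{-chain from } x \text{ to } x\}$ is closed and contains every periodic point. The containment is immediate, since the orbit of a periodic point read cyclically is an $\epsilon$-chain from it to itself for every $\epsilon > 0$. Closedness is the routine uniform-continuity argument: given $x_n \to x$ with $x_n \in \mathrm{CR}(f)$ and $\epsilon > 0$, pick $\delta < \epsilon/4$ with $\dist(a,b) < \delta \Rightarrow \dist(f(a), f(b)) < \epsilon/4$, choose $n$ with $\dist(x_n, x) < \delta$, and splice $x$ onto both ends of a $\delta$-chain from $x_n$ to $x_n$. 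Consequently, if the periodic points of $g|_{\Omega(g)}$ were dense in $\Omega(g)$, we would get $\mathrm{CR}(g|_{\Omega(g)}) = \Omega(g)$, i.e.\ chain-nonwandering, contradicting Theorem~\ref{thm:NotChainTrans}; hence $g|_{\Omega(g)}$ does not have dense periodic points.

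For transitivity I would first note that $g|_{\Omega(g)}$ is surjective: $\Omega(g) = \bigcap_{n} g^n(\{0,1\}^{\Z^2})$ is a nested intersection of nonempty compacta, and for $x \in \Omega(g)$ the sets $K_n = g^{-1}(x) \cap g^n(\{0,1\}^{\Z^2})$ are nonempty, compact and nested, so their intersection is a nonempty subset of $\Omega(g)$ mapping onto $x$. Then I would prove that a surjective topologically transitive continuous selfmap $f$ of a compact metric space is chain-transitive: given $x, y$ and $\epsilon > 0$, choose $\delta \in (0,\epsilon)$ with $\dist(a,b) < \delta \Rightarrow \dist(f(a),f(b)) < \epsilon$, write $y = f(y')$ using surjectivity, apply transitivity to $B(x,\delta)$ and $B(y',\delta)$ to obtain $z \in B(x,\delta)$ and $n \ge 1$ with $f^n(z) \in B(y',\delta)$, and take the $\epsilon$-chain $x,\ f(z),\ f^2(z),\ \ldots,\ f^n(z),\ y$. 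In particular $f$ is chain-nonwandering, so applying this with $f = g|_{\Omega(g)}$ again contradicts Theorem~\ref{thm:NotChainTrans}, and $g|_{\Omega(g)}$ is not topologically transitive.

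The only real bookkeeping point is making the chain in the transitivity step terminate \emph{exactly} at $y$ rather than merely near it; this is precisely what the trick of pulling $y$ back one step via surjectivity handles, and I expect this to be the sole place where a naive argument would stumble. Everything else is standard, and the whole proof is short once Theorem~\ref{thm:NotChainTrans} is in hand.
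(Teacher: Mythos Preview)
Your proposal is correct and follows essentially the same approach as the paper: both derive the result directly from Theorem~\ref{thm:NotChainTrans} by observing that topological transitivity and dense periodic points each imply chain-nonwandering. The paper compresses this into a single sentence (``either of these properties clearly implies chain-nonwanderingness''), while you spell out the details---in particular, your surjectivity argument for $g|_{\Omega(g)}$ and the pull-back trick to make the chain land exactly at $y$ are a clean way to justify the transitivity half that the paper leaves implicit.
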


\begin{proof}
Either of these properties clearly implies chain-nonwanderingness, contradicting Theorem~\ref{thm:NotChainTrans}. 
\end{proof}

Two other standard notions of chaos are Li-Yorke chaos and positive entropy (we omit the definitions). Game of Life exhibits these trivially, since it admits a glider. More generally, intrinsic universality implies that it exhibits any property of spatiotemporal dynamics of cellular automata that is inherited from subsystems of finite-index subactions of the spacetime subshift. Sensitivity in itself is also sometimes considered a notion of chaos. This remains wide open.

\begin{question}
Is Game of Life sensitive to initial conditions?
\end{question}

One can also ask about chaos on ``typical configurations''. For example, take the uniform Bernoulli measure (or some other distribution) as the starting point, and consider the trajectories of random configurations. We can say essentially nothing about this setting.

In our topological dynamical context, a natural way to formalize this problem is through the \emph{generic limit set} as defined in \cite{Mi85}. It is a subset of the phase space of a dynamical system that captures the asymptotic behavior of topologically large subsets of the space.
We omit the exact definition, but for a cellular automaton $f$, this is a nonempty subshift invariant under $f$ \cite{DjGu19}. It follows that the generic limit set is contained in the limit set, and that the language of the generic limit set of $g$ contains the letter $0$ (because the singleton subshift $\{1^{\Z^2}\}$ is not $g$-invariant).

We say a cellular automaton $f$ on $S^{\Z^d}$ is \emph{generically nilpotent} if its generic limit set contains only one configuration, which must then be the all-$s$ configuration for a quiescent state $s \in S$.
This is equivalent to the condition that every finite pattern can be extended into some larger pattern $p$ such that for large enough $n \in \N$, we have $f^n(x)_{\vec 0} = s$ for all $x \in [p]$.
By the previous observation, if Game of Life were generically nilpotent, we would have $s = 0$. We strongly suspect that it is not generically nilpotent, i.e.\ the symbol $1$ occurs in the generic limit set. However, we have been unable to show this.

\begin{question}
\label{q:GenNilpotent}
Is Game of Life generically nilpotent?
\end{question}

Chaos is usually discussed for one-dimensional dynamical system, but we find its standard ingredients, such as topological transitivity and periodic points, quite interesting. We have been unable to resolve most of these.

\begin{question}
Is the limit set of Game of Life topologically transitive as a subshift?
\end{question}

\begin{question}
Does the limit set of Game of Life have dense totally periodic points as a subshift?
\end{question}

\bibliography{golbib}

\end{document}